\documentclass[12pt]{article}
\usepackage[utf8]{inputenc}

\RequirePackage{amsthm,amsmath,amsfonts,amssymb,mathrsfs}
\RequirePackage{lmodern}
\RequirePackage{booktabs,tabularx}
\RequirePackage{bm}
\RequirePackage{enumitem}
\RequirePackage{subcaption}
\RequirePackage{chngcntr}
\RequirePackage{multirow}
 \RequirePackage{authblk}
\RequirePackage{thm-restate}
\usepackage[authoryear]{natbib}
\usepackage{multibib}
\newcites{supp}{Supplementary References}
\RequirePackage[colorlinks,citecolor=blue,urlcolor=blue]{hyperref}%
\RequirePackage{graphicx,color,xcolor}%
\RequirePackage{cleveref}
\usepackage{caption}

\addtolength{\oddsidemargin}{-.5in}%
\addtolength{\evensidemargin}{-1in}%
\addtolength{\textwidth}{1in}%
\addtolength{\textheight}{1.7in}%
\addtolength{\topmargin}{-1in}%

\pdfminorversion=4

\theoremstyle{plain}
\theoremstyle{plain}

\newtheorem*{thm*}{Theorem}
\newtheorem{lem}{Lemma}[section]

\theoremstyle{definition}
\newtheorem{defn}{Definition}[section]
\newtheorem*{defn*}{Definition}
\newtheorem{exmp}{Example}[section]
\newtheorem{assumption}{Assumption}

\newtheorem{rem}{Remark}[section]

\Crefname{lem}{Lemma}{Lemmas}
\Crefname{assumption}{Assumption}{Assumptions}
\Crefname{defn}{Definition}{Definitions}
\crefname{assumption}{assumption}{assumptions}
\crefname{exmp}{example}{examples}
\crefname{thm}{theorem}{theorems}
\crefname{figure}{Fig.}{Fig.}
\Crefname{figure}{Figure}{Figures}
\renewcommand{\Pr}{\mathbb{P}}

\newcommand{\R}{\mathbb{R}}

\newcommand{\intd}{\mathrm{d}}

\newcommand{\punitsphere}[1]{\mathcal{N}_{+,\|\cdot\|}^{#1}}

\newcommand{\punitsphereone}[1]{\mathcal{N}_{+,\|\cdot\|_1}^{#1}}

\newcommand{\prnspace}[1]{\mathbb{R}_+^{#1}}

\allowdisplaybreaks

\usepackage{fancyhdr} %
\pagestyle{fancy} %
\lhead{}\chead{}\rhead{}
\lfoot{}\cfoot{\thepage}\rfoot{}

\begin{document}

\def\spacingset#1{\renewcommand{\baselinestretch}%
{#1}\small\normalsize} \spacingset{1}

\title{\bf Validity and Power of Heavy-Tailed Combination Tests under Asymptotic Dependence}
\author{Lin Gui\vspace{-0.35cm}\\
    Department of Statistics, The University of Chicago \hspace{.2cm}\\
  \hspace{.2cm}\\ 
    Tiantian Mao\hspace{.2cm}\\
    Department of Statistics and Finance, University of Science and Technology of China \hspace{.2cm}\\
    \hspace{.2cm}\\ 
    Jingshu Wang\hspace{.2cm}\\
    Department of Statistics, The University of Chicago
   \hspace{.2cm}\\
    \hspace{.2cm}\\ 
    Ruodu Wang\hspace{.2cm}\\
    Department of Statistics and Actuarial Science,
University of Waterloo
    }
  
    \date{\vspace{-5ex}}
  \maketitle

\begin{abstract}
    Heavy-tailed combination tests, such as the Cauchy combination test and harmonic mean p-value method, are widely used for testing global null hypotheses by aggregating dependent p-values. Existing theoretical guarantees, however, are largely restricted to the case of asymptotically independent 
p-values, leaving the behavior of these tests under broader dependence structures poorly understood.
 We develop a unified framework based on multivariate regularly varying copulas, a flexible class defined by a mild regularity condition on the joint behavior of p-values near zero, that accommodates a wide range of dependence structures. Within this framework, heavy-tailed combination tests are asymptotically valid when the transformation distribution has tail index $\gamma \leq 1$, with $\gamma = 1$ maximizing power while preserving validity. We further show that combination tests with $\gamma = 1$ achieve strictly greater asymptotic power than Bonferroni's method if and only if the 
p-values are not asymptotically independent and signals are not extremely sparse, with the power advantage growing as dependence strengthens. Bonferroni emerges as the $\gamma \to 0$ limit and becomes overly conservative under asymptotic dependence. These results provide theoretical support for using truncated Cauchy or Pareto combination tests, offering a principled approach to enhance power while controlling false positives under complex dependence.
\end{abstract}

\noindent\textbf{Keywords:} Cauchy combination tests; Harmonic mean p-values; multivariate regularly varying; dependent p-values; global null testing

\section{Introduction}

\subsection{Background}
Heavy-tailed combination tests, such as the Cauchy combination test \citep{liu2020cauchy}, the harmonic mean p-value method \citep{wilson2019harmonic}, and combined p-values  based on  generalized means \citep{vovk2020combining}, have gained increasing popularity for testing global null hypotheses by aggregating p-values under complex and unknown dependence structures. These methods are widely used in empirical studies, including combining p-values from different testing procedures on the same dataset \citep{sun2020statistical}, across genetic variants within a genome region \citep{liu2019acat}, or from repeated data splits \citep{cai2022model}. The core idea is to transform p-values via quantile functions of a transformation distribution into heavy-tailed variables and approximate the tail probability of their weighted sum, by treating the components as independent in the approximation. Compared to Bonferroni’s test, which remains valid under arbitrary dependence but is often conservative, heavy-tailed combination tests are viewed as similarly valid in practice but more powerful, especially when signals are dense. 

Despite their growing use, the theoretical foundations of these methods remain incomplete.  Existing results establish their asymptotic validity—meaning that the limiting ratio of the test’s type-I error to the nominal level $\alpha$ becomes at most one as $\alpha \to 0$---only under asymptotic independence of p-values in the lower tail \citep{liu2020cauchy,fang2023heavy,gui2023aggregating,liu2025heavily}, as when test statistics are pairwise Gaussian and not perfectly correlated. Intuitively, asymptotic independence requires that p-values become nearly independent as they approach zero, a restrictive assumption given that the validity of the combination tests is guaranteed only as $\alpha$ vanishes.
While prior work has also shown asymptotic validity in extreme dependence of perfect correlation, little is known theoretically about the tests’ behavior under broader, more realistic dependence structures despite their widespread application in such contexts. 
On the other hand, to get full validity for  arbitrary dependence, the combination tests need a substantial correction that are often very conservative \citep{vovk2020combining, vovk2022admissible}, and typical dependence can improve the efficiency of these tests greatly \citep{chen2023trade, gasparin2025combining}.
Procedures that are adaptive to dependence assumptions are also popular in more general contexts of multiple testing \citep{fithian2022conditional,marandon2024adaptive}.

Furthermore, as shown by \citep{gui2023aggregating}, under asymptotic independence, heavy-tailed combination tests are asymptotically equivalent to Bonferroni’s test, offering no real power gain. 
In contrast, when all base p-values are identical, Bonferroni correction penalizes the number of p-values linearly, whereas the Cauchy and harmonic mean methods maintain the nominal type-I error without a penalty of dimension. This contrast raises key questions: what dependence structures beyond asymptotic independence allow combination tests to offer asymptotic power gains over Bonferroni? How does the extent of dependence among p-values influence the magnitude of such gains? And how can one select the transformation to maximize power while maintaining validity? Addressing these questions is essential for both the theoretical understanding and practical use of heavy-tailed combination tests.

\subsection{Our contributions} 
This paper develops a broad theoretical framework for understanding the validity and power of heavy-tailed combination tests under complex dependence among p-values. We introduce the multivariate regularly varying (MRV) copula, a flexible class from extreme value theory that allows for a wide range of dependence structures among p-values near zero. This framework imposes only a mild assumption: that a first-order scaling limit exists for the joint probability of small 
p-values as they approach zero, without restricting the shape of the dependence. The class includes a range of commonly used copula models and accommodates both weak and strong forms of tail dependence.

Within this framework, we show that heavy-tailed combination tests are asymptotically valid provided that the transformation distribution is sufficiently heavy-tailed, specifically when the tail index $\gamma \leq 1$, where a larger $\gamma$ corresponds to lighter tails. Moreover, both asymptotic type-I error and power are non-decreasing in $\gamma$. The increase is strict if and only if the 
p-values are not asymptotically independent near zero, with the additional requirement that signals are not extremely sparse for power.
 The choice $\gamma = 1$ achieves the highest power while preserving asymptotic validity across all MRV copulas, making it a natural default choice in practice.

 We also provide a sharp comparison with Bonferroni's method: combination tests achieve strictly greater asymptotic power than Bonferroni if and only if the 
p-values are not asymptotically independent and signals are not extremely sparse.
 In fact, Bonferroni can be viewed as a limiting case of combination tests with $\gamma \to 0$. As dependence among p-values increases, the power advantage of combination tests with $\gamma = 1$ over Bonferroni becomes larger, reaching its maximum under complete lower-tail dependence.

In summary, our results establish that heavy-tailed combination tests with $\gamma = 1$ provide a robust and effective choice that is asymptotically valid under a broad class of dependence structures and can substantially outperform Bonferroni in power as dependence strengthens. Our theory further requires that the transformation distribution has bounded lower support, a mild condition satisfied by transformations such as the truncated Cauchy or Pareto combination tests, which have also been recommended in practice \citep{gui2023aggregating,fang2023heavy,liu2025heavily}. These results provide theoretical support for the use of $(\gamma=1)$ heavy-tailed combination tests in a wide range of applications.

\subsection{Notations}

All random variables are represented in capital letters, and nonrandom values and vectors are represented in small letters. 
We denote by $\mathbf{e}_i=(0,\dots,1,\dots,0)$   the unit vector where the $i$th component is $1$, and let $x^+=\max(0,x)$. We use $\mathbf{0}$ and    $\mathbf{1}$ for the all-zero and all-one vectors. The non-negative part of the $n$-dimensional space without the origin is denoted by $\Xi=[\mathbf{0},\bm{\infty})\setminus\{\mathbf{0}\}$ and the positive part of $n$-dimensional real space is denoted by $\prnspace{n}$. The cube in $[\mathbf{0},\bm{\infty})$ and its complementation are denoted by
\begin{align*}
    & [\mathbf{0},\mathbf{c}]=\{\mathbf{x}=(x_1,\dots,x_n)\in\R^n:0\le x_i\le c_i,~\forall~i\},\\
    & [\mathbf{0},\mathbf{c}]^c=[\mathbf{0},\bm{\infty})\setminus[\mathbf{0},\mathbf{c}].
\end{align*} The unit sphere in $\Xi$ with respect to the $L_1$ norm is 
\begin{align*}
    \punitsphereone{n}=
    \left\{\mathbf{x}=(x_1,\dots,x_n)\in\Xi:\sum_{i=1}^n x_i=1\right\}~.
\end{align*}
For a distribution with the cumulative distribution function (CDF) $F$, we use  $\overline{F}(\cdot)=1-F(\cdot)$ to present the survival function and $Q_F$ to represent
its (left) quantile function, defined as
\[
Q_F(t) = \inf\left\{x\in\mathbb{R}:\  F(x) \ge t \right\}, ~~t\in (0,1).
\]

\section{Heavy-Tailed Combination Test}

We study the global null testing problem based on multiple dependent p-values. Specifically, let $H_{0,1}, \dots, H_{0,n}$ denote $n$ base null hypotheses with corresponding p-values $P_1, \dots, P_n$. Our goal is to test the global null hypothesis
\begin{equation*}
H_0^\mathrm{global}: H_{0,1}\cap \dots \cap H_{0,n}.
\end{equation*}

To construct the heavy-tailed combination test, we first introduce a class of distributions that form a subset of the regularly varying class:

\begin{defn}Let $\mathscr{R}_{-\gamma}^*$ denote the class of CDFs $F$ satisfying:
\begin{enumerate}[label=(\roman*)]
\item $F$ belongs to the \emph{regularly-varying tailed class} $\mathscr{R}_{-\gamma}$ with tail index $\gamma>0$, i.e.,
\begin{equation*}
\lim_{x\to\infty}\frac{\overline{F}(xy)}{\overline{F}(x)}=y^{-\gamma},~~\forall~y>0.
\end{equation*}
\item $F$ is strictly increasing and continuous on its support.
\item The support of $F$ is left-bounded, i.e., $\operatorname{supp}(F) = [c, \infty)$ for some $c \in \mathbb{R}$.
\end{enumerate}
\end{defn}
The class $\mathscr{R}_{-\gamma}^*$ includes many well-known distributions, such as left-truncated $t$ distributions, Pareto distributions, and inverse Gamma distributions (see Table 1 of \citet{gui2023aggregating} for additional examples). Suppose that there is a pre-specified weight $\omega_i$ for each p-value which satisfies that $\omega_i>0$ and $\sum_{i=1}^n\omega_i=n$. Given a distribution $F \in \mathscr{R}_{-\gamma}^*$, we transform each p-value via its quantile function along with this weight $\omega_i$ to obtain heavy-tailed statistics: $X_{i,\omega_i} = Q_F\left((1 - P_i/\omega_i)^+\right)$. Define the average as follows:
\begin{equation}
\label{eq:average_statistics}
\bar X_{n,\bm{\omega}}=\frac{1}{n}\sum_{i=1}^nX_{i,\omega_i}=\frac{1}{n}\sum_{i=1}^nQ_F\left((1 - P_i/\omega_i)^+\right),
\end{equation}
where $\bm{\omega}=(\omega_1,\dots,\omega_n)$ stands for the vector of weights. 
The heavy-tailed combination tests approximate the tail probability $P(\bar X_{n,\bm{\omega}} > x)$ by $n^{1-\gamma} \overline F(x)$ for large $x$: 
\begin{defn}[\textbf{Weighted Heavy-tailed combination test}] 
\label{def:weighted-combination-test}
Given p-values $P_1,\dots,$ $P_n$, a transformation function $F\in\mathscr{R}_{-\gamma}^*$ and a non-random vector of positive weights $\bm{\omega} = (\omega_1, \dots, \omega_n)$ satisfying $\sum_{i=1}^n\omega_i=n$, the combined p-value of the heavy-tailed combination test is defined as $$P_{\mathrm{comb}}^{F,\bm\omega}=\min\left(1, n^{1-\gamma} \overline F(\bar X_{n,\bm{\omega}})\right).$$ 
At significance level $\alpha$, the corresponding decision function is
\begin{equation*}
\phi_\mathrm{comb}^{F, \bm \omega} = 1_{\left\{\bar X_{n,\bm{\omega}}> Q_F\left(1-\alpha/n^{1-\gamma}\right)\right\}}.
\end{equation*}
The tail index $\gamma$ associated with $F$ is referred to as the \emph{transformation parameter} of the test.
\end{defn}

\begin{rem}
Our formulation of weighting differs slightly from some of the previous work \citep{liu2020cauchy,fang2023heavy,gui2023aggregating} in that we apply weights to the base p-values rather than the test statistics, but consistent with \cite{wilson2019harmonic, vovk2020combining}, where p-values are the primitive objects. 
The two formulations are very similar in practice, and our choice is natural because it allows for consistency and comparison across different transformations.  
Additionally, instead of using the average statistic in \eqref{eq:average_statistics}, one may define a general combination statistic as
$S=a\sum_{i=1}^nX_{i,\omega_i}$ for any fixed $a >0$, with the corresponding combined p-value $\min\left(1, na^{\gamma} \overline F(S)\right)$. %
Setting $a = 1/n$ corresponds to the average statistics as in \Cref{def:weighted-combination-test}, while $a= 1$ corresponds to the sum. Empirically, the choice of $a$ has a negligible effect on the power of the combination test.
\end{rem}

\begin{exmp}[Truncated Cauchy combination test]
\label{exmp:truncated_cauchy}
The Cauchy combination test was originally proposed by \citet{liu2020cauchy}. It constructs a combined test statistic
$$T = \frac{1}{n}\sum_{i=1}^n \tan\left\{(0.5-P_i)\pi\right\}.$$
The corresponding p-value is then computed by approximating the distribution of $T$ with a standard Cauchy distribution:
$$P_{\mathrm{comb}}^{\mathrm{CCT}}=\frac{1}{2}-\frac{\arctan T}{\pi}.$$
Here, the transformation function $F$ is the cumulative distribution function of the standard Cauchy distribution, whose survival function is $\overline{F}_\mathrm{C}(x) = 1/2 - (\arctan x)/\pi$ and quantile function is $Q_{F_\mathrm{C}}(x)=\tan[\pi(x-1/2)]$. The transformation parameter is $\gamma = 1$ and the weights $\omega_i = 1$.

Since the support of the Cauchy distribution is unbounded, the test can be inefficient when some base p-values are exactly equal to 1, as this leads to infinite transformed values. To address this, researchers have proposed variants of the test based on truncated or half-Cauchy distributions \citep{fang2023heavy, gui2023aggregating, liu2025heavily}. In particular, the truncated Cauchy distribution with truncation level $q \in (0, 1)$ has CDF
$F_{\mathrm{tC}}(x) = (F_{\mathrm{C}}(x) - q)/(1 - q) \text{ for } x \geq Q_{F_{\mathrm{C}}}(q)$,
which truncates the standard Cauchy distribution below its $q$-th quantile and belongs to $\mathscr{R}_{-1}^*$. Heavy-tailed combination tests using this transformation were proposed and studied in \cite{gui2023aggregating}.  Adjusted versions of the truncated/half-Cauchy combination test have also been introduced \citep{fang2023heavy, liu2025heavily} to ensure validity under mutual independence of the p-values.
    
\end{exmp}

\begin{exmp}[Harmonic mean p-values] 
\label{exmp:harmonic_mean}
The harmonic mean p-value (HMP, \citet{wilson2019harmonic}) is defined as 
$$P_{\mathrm{comb}}^{\mathrm{HMP}} = \frac{n}{\sum_i 1/P_i}.$$
This is a special case of the heavy-tailed combination test, where the transformation function $F$ is the CDF of the standard Pareto distribution with survival function $\overline{F}(x) = x^{-\gamma}$ for $\gamma = 1$. The support of this distribution is $[1, \infty)$. In this case, equal weights $\omega_i = 1$ are assigned to all $i$ and the transformed statistics are $X_i = 1/P_i$.
An adjusted version of the HMP was also proposed by \citet{wilson2019harmonic} to ensure validity under the assumption of mutual independence among p-values and large $n$.
\end{exmp}

\section{Multivariate Regularly Varying Copula}
Copulas provide a convenient framework for separating marginal behavior from dependence structure. In the context of statistical inference, this is particularly useful for modeling dependence among p-values without relying on a full joint model for the underlying test statistics.

Building on this perspective, we focus on a broad class of copulas, termed \emph{multivariate regularly varying (MRV) copulas}, which capture a wide range of tail dependence behaviors, from weak to strong dependence. Many of the concepts and results in this section are adapted from Chapter 8 of \citet{beirlant2006statistics}, and are restated in a form tailored to our setting and notation. Additional background is provided in Appendix S2.

\subsection{Definition of MRV Copulas}
A \emph{copula} is a multivariate distribution function with standard uniform marginals. For a random vector $\mathbf{Z} = (Z_1, \dots, Z_n)$ with joint distribution function $F_{\mathbf{Z}}$ and marginal distributions $F_1, \dots, F_n$, an associated copula $C$ satisfies
$$
C(F_1(z_1) ,\dots,F_n(z_n))= F_{\mathbf{Z}}(z_1,\dots, z_n),~~~~(z_1,\dots,z_n)\in \R^n.
$$
By Sklar's theorem \citep{sklar1959fonctions}, such a copula always exists and is unique when the marginal distributions are continuous. The copulas of a random vector remain invariant under strictly increasing marginal transformations of the random vector. %

In hypothesis testing, type-I error control typically focuses on the behavior of p-values near zero. Therefore, the joint lower tail behavior of p-values plays a central role.   
To be consistent with the convention in extreme value theory (where results and models are often formulated for the upper tail), we use the transformation $(1 - P_1, \dots, 1 - P_n)$ of the p-value vector $(P_1, \dots, P_n)$, which maps lower tail dependence into upper tail dependence. We then characterize the dependence structure by analyzing the upper tail behavior of the associated copula. In particular, we focus on the class of MRV copulas, which are defined via the limiting behavior of the copula in the upper tail.

\begin{defn}[Tail dependence function and MRV copula] 
\label{def:mrv-copula}
The upper \emph{tail dependence function} of a copula $C$ is
\[
D(u_1,\dots,u_n): = 1 - C(1 - u_1, \dots, 1 - u_n),~\left(u_1,\dots,u_n\right)\in[\mathbf{0},\mathbf{1}]\subset\R^n. 
\]
A copula $C$ is an \emph{MRV copula} if its tail dependence function $D$ satisfies
\begin{equation*}
    \lim_{s\downarrow0} s^{-1} D\left(s v_1,\dots,sv_n\right) = \ell(v_1,\dots,v_n),~~ \forall v_1,\dots,v_n\geq 0  
\end{equation*}
for some function $\ell$. 
\end{defn}

The definition of MRV copulas requires that there exists a first-order scaling limit as the tail dependence function approaches the origin. The limiting function $\ell$ is called the \emph{stable tail dependence function} of the MRV copula $C$ and has the homogeneity property $\ell(sv_1, \dots, sv_n)=s\ell(v_1,\dots, v_n)$ for any $s>0$ and any $v_1, \dots, v_n \geq 0$. It also has the following integral representation:
\begin{equation}
\label{eq:define-l}
\ell(v_1,\dots,v_n) = \int_{\punitsphereone{n}} \max(v_1 \theta_1, \dots, v_n \theta_n) \,  H^*(\intd\bm{\theta}),
\end{equation}
where $H^*$ is a measure on the positive unit sphere $\punitsphereone{n}$, known as the \emph{spectral measure} of $C$ (see Section S2.2 for a formal definition of $H^*$ and the proof of \eqref{eq:define-l}). 

Intuitively, the stable tail dependence function $\ell$ captures the joint dependence of the multivariate distribution at the upper tail. For instance, when $n = 2$, 
$$\ell(1, 1) = 2 - \lim_{s\downarrow0}\Pr[F_1(Z_1)>1-s\mid F_2(Z_2)>1-s]$$
where $(Z_1, Z_2)$ has copula $C$ and marginal CDFs $F_1$ and $F_2$. This quantity reflects the limiting conditional tail probability and thus summarizes the strength of tail dependence.

An equivalent characterization of MRV copulas (Chapter~8.3.2 of \cite{beirlant2006statistics}) %
is through the notion of a limiting copula. Specifically, $C$ is an \emph{MRV copula} if and only if there exists a copula $C^*$ such that,  
\begin{equation}
\label{eq:limiting-copula}
    \lim_{t\to+\infty}C(u_1^{1/t},\dots,u_n^{1/t})^t=C^*(u_1,\dots,u_n), \mbox{~for all $u_1,\dots,u_n\in [\mathbf0,\mathbf1]\subseteq\R^n$.}
\end{equation}
    The limiting copula $C^*$ in \eqref{eq:limiting-copula} is also known as the \emph{extreme value copula}. Intuitively, $C^*$ is the copula of the limiting distribution of the properly normalized component-wise maximum
of independent samples from a multivariate distribution $F$ with corresponding copula $C$ (see details in Section S2.1.1). The connection between $C^*$ and the stable tail dependence function $\ell$ is given by (Eq.~(8.52) of \cite{beirlant2006statistics}):
\begin{equation}
\label{eq:c-star-and-l}
    C^*(u_1,\dots, u_n)=\exp{\left\{-\ell\left(-\log u_1,\dots, -\log u_n\right)\right\}}.
\end{equation}
    
It has also been established that $C^*$, $\ell$, and the spectral measure $H^*$ are in one-to-one correspondence, with each uniquely determining the others. These objects offer complementary representations of the upper-tail dependence structure associated with the MRV copula.

\subsection{Asymptotic dependence structure}
\label{subsec:asymptotic-dependence}

We now study the strength of asymptotic dependence in the upper tail for random variables $(1 - P_1, \dots, 1 - P_n)$, the transformed p-values, under the assumption that their copula is an MRV copula. This dependence is characterized by the stable tail dependence function $\ell$, or equivalently by the limiting copula $C^*$ or the spectral measure $H^*$ introduced in the previous subsection. These representations describe how joint tail events occur across components, and thus quantify the strength of dependence among p-values when they are close to zero.

Within the MRV copula framework, an important benchmark for weak dependence is \emph{asymptotic independence}. Intuitively, this condition means that the joint occurrence of extreme events across components becomes negligible in the upper tail. Formally, it is defined as follows:

\begin{defn}[Asymptotic independence]
\label{def:asymptotic-independence}
A random vector $\mathbf{Z}=(Z_1,\dots,Z_n)$ with continuous marginals is called \emph{asymptotically independent} 
if its copula is MRV and its limiting copula $C^*$ satisfies
\begin{equation}
\label{eq:define-indep-copula}
  C^* (u_1,\dots,u_n) =   C^*_{\mathrm{indep}}(u_1,\dots,u_n):=u_1\dots u_n.
\end{equation}
In this case, the corresponding stable tail dependence function is $\ell(v_1, \dots, v_n) = v_1 + \dots +v_n$, and the spectral measure $H^*$ is concentrated on the basis vectors $\mathbf{e}_i$ for $i = 1, \dots, n$, with unit mass at each point. 
\end{defn}

\begin{rem}
It has been shown that $\mathbf Z$ with continuous marginals is asymptotically independent if it is pairwise quasi-asympotically independent. That is, for any pair $(i,j)$, the following condition holds (Eq. (8.100) of \cite{beirlant2006statistics})
\begin{equation}
\label{eqn:pairwise}
    \lim_{s\downarrow0}\Pr[F_i(Z_i)>1-s\mid F_j(Z_j)>1-s]=0.
\end{equation}
This result indicates that if, for each pair of components, simultaneous extreme events become negligible in the upper tail, as in \eqref{eqn:pairwise}, then $\mathbf Z$ has an MRV copula and is asymptotically independent. Such pairwise quasi-asymptotic independence is commonly assumed in existing literature on heavy-tailed combination tests \citep{liu2020cauchy,fang2023heavy,gui2023aggregating}, and is satisfied, for example, when test statistics are pairwise Gaussian with no perfect correlations \citep{gui2023aggregating}.
\end{rem}

At the opposite extreme within the MRV copula class is \emph{asymptotic complete dependence}, which represents the strongest form of dependence, where all components tend to exhibit extreme behavior simultaneously.

\begin{defn}[Asymptotic complete dependence]
\label{def:asymptotic-complete-dependence}
A random vector $\mathbf{Z}=(Z_1,\dots,$ $Z_n)$ with continuous marginals
is called \emph{asymptotically completely dependent} if its copula is MRV and its limiting copula $C^*$ satisfies:
\begin{equation}
\label{eq:define-asymp-fully-dep}
   C^*(u_1,\dots,u_n) = C^*_{\mathrm{cdep}}(u_1,\dots,u_n):=\min(u_1,\dots,u_n).
\end{equation}
Equivalently, the stable tail dependence function is $\ell(v_1, \dots,v_n)=\max(v_1,\dots,v_n)$ and the spectral measure is concentrated on $\mathbf{1}/\|\mathbf{1}\|_1$, with a total mass equal to $n$.
\end{defn}

\begin{rem}
A sufficient condition for asymptotic complete dependence is pairwise asymptotic dependence. 
That is, if for any pair $(i,j)$, if
\[
\lim_{s\downarrow0}
\Pr\{F_i(Z_i)>1-s \mid F_j(Z_j)>1-s\}=1,
\]
then the copula of $\mathbf Z$ is an MRV copula with the complete-dependence limiting copula $C^*_{\mathrm{cdep}}$, and hence $\mathbf Z$ is asymptotically completely dependent. 
\end{rem}

We now introduce a framework for comparing the strength of asymptotic dependence in the upper tail of MRV copulas, corresponding to the regime where p-values approach zero. We begin by defining a partial ordering on copulas.

\begin{defn}[Pointwise Order]
\label{def:concord-order-copula}
Given two copulas $C_1$ and $C_2$, we say that $C_2$ dominates $C_1$ in the pointwise order, denoted $C_1 \preceq_{\mathrm{pw}} C_2$,  if
\[
C_1(u_1,\dots,u_n)\le C_2(u_1,\dots,u_n),\quad \mathrm{for~all~}(u_1,\dots,u_n)\in[\mathbf{0},\mathbf{1}].
\]
\end{defn}
Intuitively, a higher position in the pointwise order reflects stronger positive dependence. 
The  order is weaker than the commonly used concordance order, which is defined as both $C_1 \preceq_{\mathrm{pw}} C_2$
and $\overline{C}_1 \preceq_{\mathrm{pw}} \overline{C}_2$, where $\overline{C}$ is the survival copula of a copula $C$. For many commonly used copula families, this pointwise ordering is well-understood and can be inferred directly from parameter values.

\begin{exmp}[Dependence ordering of $t$ copulas]
\label{exmp:t-dependence}
Let $T = T_{n, \nu, \Sigma}$ and $T' = T_{n, \nu, \Sigma'}$ be two $n$-dimensional $t$ distributions with the same degrees of freedom  $\nu$ and different scale matrices $\Sigma = (\rho_{ij})$ and $\Sigma' = (\rho_{ij}')$, where $\rho_{ii}=\rho'_{ii}$ for all $i=1,\dots,n$.  If $\rho_{ij} \le \rho_{ij}'$ for all $i \ne j$, then the corresponding $t$ copulas satisfy
\[
C_{t,\Sigma}\preceq_{\mathrm{pw}} C_{t,\Sigma'}.
\]
This follows from Theorem~1 of \citet{ANSARI2021104709}, since the supermodular order implies the pointwise order \citep{shaked2007stochastic}. This result aligns with the intuition that higher correlation leads to stronger positive dependence.
\end{exmp}

\begin{exmp}[Dependence ordering of survival Clayton copulas]
\label{exmp:clayton-dependence}
The Clayton copula with parameter $\theta > 0$ is defined as
\[
C_\theta(u_1,\dots,u_n)
= \left( \sum_{i=1}^n u_i^{-\theta} - (n-1) \right)^{-1/\theta}, \quad u_i \in [0,1].
\]
Its survival copula is given by
\[
\overline{C}_\theta(u_1,\dots,u_n)
= \Pr(1-U_1 \le u_1,\dots,1-U_n \le u_n),
\]
where $(U_1,\dots,U_n)$ follows $C_\theta$.

The dependence level of the survival Clayton copula is determined by the generator index $\theta$. 
According to Lemma 4.3 of \citet{embrechts2009additivity} and (9.A.17) of \citet{shaked2007stochastic}, for any $0 < \theta_1 \le \theta_2$, 
\[
C_{\theta_1}\preceq_{\mathrm{pw}} C_{\theta_2},
\]
where $C_{\theta_1}$ and $C_{\theta_2}$ are survival Clayton copulas with generator index $\theta_1$ and $\theta_2$.
\end{exmp}

For MRV copulas, this ordering is preserved under the limiting operation, as formalized below.
\begin{restatable}{prop}{Limitorder}
\label{prop:limit-preserve-concordance-order}
Let $C_1$ and $C_2$ be MRV copulas with limiting copulas $C_1^*$ and $C_2^*$, respectively. If $C_1 \preceq_{\mathrm{pw}} C_2$, then $C_1^* \preceq_{\mathrm{pw}} C_2^*$.
\end{restatable}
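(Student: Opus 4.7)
The plan is to exploit the characterization of the limiting copula given in \eqref{eq:limiting-copula}, namely
\[
C_k^*(u_1,\dots,u_n) = \lim_{t\to+\infty} C_k(u_1^{1/t},\dots,u_n^{1/t})^t, \qquad k=1,2,
\]
and transport the pointwise inequality $C_1\le C_2$ through this limit. The argument is essentially a monotonicity-plus-limit passage, with no extreme value machinery needed beyond the alternative definition already stated.

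First I would fix an arbitrary $(u_1,\dots,u_n)\in[\mathbf{0},\mathbf{1}]$ and note that for every $t>0$, the point $(u_1^{1/t},\dots,u_n^{1/t})$ also lies in $[\mathbf{0},\mathbf{1}]$, so the hypothesis $C_1\preceq_{\mathrm{pw}} C_2$ applies at this point to give
\[
C_1(u_1^{1/t},\dots,u_n^{1/t}) \le C_2(u_1^{1/t},\dots,u_n^{1/t}).
\]
Since both sides are values of copulas, they lie in $[0,1]$, and the map $x\mapsto x^t$ is monotone nondecreasing on $[0,1]$ for any $t>0$. Raising both sides to the $t$-th power therefore preserves the inequality:
\[
C_1(u_1^{1/t},\dots,u_n^{1/t})^t \le C_2(u_1^{1/t},\dots,u_n^{1/t})^t.
\]

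Finally, because both $C_1$ and $C_2$ are MRV copulas, the limits of both sides as $t\to+\infty$ exist and equal $C_1^*(u_1,\dots,u_n)$ and $C_2^*(u_1,\dots,u_n)$, respectively, by \eqref{eq:limiting-copula}. Weak inequalities are preserved under limits, giving
\[
C_1^*(u_1,\dots,u_n)\le C_2^*(u_1,\dots,u_n).
\]
Since $(u_1,\dots,u_n)$ was arbitrary in $[\mathbf{0},\mathbf{1}]$, this yields $C_1^*\preceq_{\mathrm{pw}} C_2^*$.

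There is no real obstacle here: the argument is essentially one monotonicity step followed by a limit, and it uses nothing about MRV copulas beyond the existence of the defining limit in \eqref{eq:limiting-copula}. The only point worth flagging is that the pointwise order is closed under the operation $C\mapsto C(\cdot^{1/t})^t$, which is exactly what makes the limit behave well.
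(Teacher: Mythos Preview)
Your proof is correct and follows essentially the same approach as the paper: apply the pointwise inequality at the point $(u_1^{1/t},\dots,u_n^{1/t})$, raise to the $t$-th power, and pass to the limit using \eqref{eq:limiting-copula}. The only difference is that you make the monotonicity of $x\mapsto x^t$ on $[0,1]$ and the preservation of weak inequalities under limits explicit, whereas the paper leaves these implicit.
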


Furthermore, all limiting copulas of MRV copulas are naturally bracketed, in the sense of the pointwise order, by the extremal cases of asymptotic independence and asymptotic complete dependence.
\begin{restatable}{prop}{ConcordanceOrder}
\label{prop:concordance-order-upper-and-lower-bound}
Let $C^*$ be the limiting copula of an MRV copula, as defined in \eqref{eq:limiting-copula}. Then the following ordering holds:
\begin{equation}
\label{eq:concordance-order-upper-and-lower-bound}
C^*_{\mathrm{indep}}\preceq_{\mathrm{pw}}C^*\preceq_{\mathrm{pw}}C^*_{\mathrm{cdep}},
\end{equation}
where $C^*_{\mathrm{indep}}$ and $C^*_{\mathrm{cdep}}$ are the limiting copulas corresponding to asymptotic independence and asymptotic complete dependence, as defined in \eqref{eq:define-indep-copula} and \eqref{eq:define-asymp-fully-dep}.
\end{restatable}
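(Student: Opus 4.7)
The plan is to prove each inequality in \eqref{eq:concordance-order-upper-and-lower-bound} separately, since they rely on different structural facts. The upper bound $C^* \preceq_{\mathrm{pw}} C^*_{\mathrm{cdep}}$ is simply the Fréchet--Hoeffding upper bound and holds for every copula, not only limiting ones: if $(U_1,\dots,U_n)$ has copula $C^*$ with uniform marginals, then $\{U_1\le u_1,\dots,U_n\le u_n\}\subseteq\{U_i\le u_i\}$ for each $i$, so $C^*(u_1,\dots,u_n)\le u_i$ and hence $C^*(u_1,\dots,u_n)\le \min_i u_i$. No properties beyond $C^*$ being a copula are needed here.

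For the lower bound $C^*_{\mathrm{indep}} \preceq_{\mathrm{pw}} C^*$, I would exploit that $C^*$ is the limiting copula of an MRV copula and therefore admits the representation \eqref{eq:c-star-and-l}. Taking logarithms and substituting $v_i=-\log u_i\ge 0$, the desired inequality $C^*(u_1,\dots,u_n)\ge u_1\cdots u_n$ is equivalent to
\begin{equation*}
\ell(v_1,\dots,v_n) \le v_1+\dots+v_n \quad \text{for all } v_1,\dots,v_n \ge 0.
\end{equation*}
This is a classical bound on the stable tail dependence function and follows from the integral representation \eqref{eq:define-l} once one has the marginal normalization $\int_{\punitsphereone{n}}\theta_j\, H^*(\intd\bm{\theta}) = 1$ for each $j=1,\dots,n$: the pointwise inequality $\max_j(v_j\theta_j)\le \sum_j v_j\theta_j$, integrated with respect to $H^*$, yields $\ell(v_1,\dots,v_n) \le \sum_j v_j \int \theta_j \, H^*(\intd\bm{\theta}) = \sum_j v_j$.

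It remains to justify the normalization $\int \theta_j\, H^*(\intd\bm{\theta})=1$, which is where both defining equations \eqref{eq:define-l} and \eqref{eq:c-star-and-l} must be combined. Setting $v_k=0$ for $k\neq j$ in the integral representation gives $\ell(0,\dots,v_j,\dots,0) = v_j\int \theta_j\, H^*(\intd\bm{\theta})$. On the other hand, setting $u_k=1$ for $k\neq j$ in \eqref{eq:c-star-and-l} and using that $C^*$ is a copula (hence has standard uniform marginals) gives $u_j = \exp\{-\ell(0,\dots,-\log u_j,\dots,0)\}$, so $\ell(0,\dots,v,\dots,0)=v$ for all $v\ge 0$. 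Matching the two expressions at $v_j=1$ yields the claim, and substituting back into the chain above completes the lower bound. The argument contains no real obstacle; the only subtle point is this marginal normalization of the spectral measure, which can also be cited directly from Chapter~8 of \cite{beirlant2006statistics} rather than re-derived.
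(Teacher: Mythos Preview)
Your proof is correct. The paper's argument reduces both inequalities simultaneously via the representation \eqref{eq:c-star-and-l} to the standard two-sided bound $\max_i x_i \le \ell(x_1,\dots,x_n) \le \sum_i x_i$ on the stable tail dependence function, which it then cites from \cite{beirlant2006statistics} rather than proving. Your treatment of the lower bound follows exactly this reduction but supplies a self-contained proof of $\ell\le\sum_i v_i$ from the spectral representation and the marginal normalization $\int\theta_j\,H^*(\intd\bm\theta)=1$ (which the paper also invokes elsewhere, in the proof of \Cref{thm:combination-test-valid}). Your upper bound is handled differently: you appeal directly to the Fr\'echet--Hoeffding upper bound, which holds for every copula and does not require $C^*$ to be an extreme value copula at all, whereas the paper obtains it from the companion inequality $\ell\ge\max_i x_i$. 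Both routes are equally short; yours is slightly more elementary for the upper bound, the paper's is more symmetric in treating the two sides.
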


\Cref{prop:concordance-order-upper-and-lower-bound} implies that the limiting upper-tail dependence of MRV copulas is always non-negative in the pointwise sense, as it is bounded below by the case of asymptotic independence.

\subsection{Examples of MRV Copulas}
\label{subsec:mrv-copula-examples}
The class of MRV copulas is quite broad, as its definition involves only the limiting behavior in the tail. As a result, many commonly used dependence structures fall within this class. We present several examples below, illustrating models that can plausibly describe the dependence of p-values.

\begin{exmp}[Multivariate Gaussian test statistics]
Consider test statistics $(T_1,\dots,T_n)$ following a multivariate Gaussian distribution with mean vector $\bm\mu$ and nondegenerate correlation matrix $R$. We study the dependence structure of the resulting p-values. 

For one-sided tests, the p-values are given by $P_i = 1 - \Phi(T_i)$, while for two-sided tests they take the form $P_i = 2(1 - \Phi(|T_i|))$. Since copulas are invariant under strictly monotone marginal transformations, the dependence structure of $(P_1,\dots,P_n)$ is determined by that of $(T_1,\dots,T_n)$ in the one-sided case, and by that of $(|T_1|,\dots,|T_n|)$ in the two-sided case.

It is well known that when $R$ is nondegenerate, the Gaussian copula has limiting copula
\[
C_R^*(u_1,\dots,u_n)=u_1\cdots u_n,
\]
indicating asymptotic independence (see, e.g., \cite{mcneil2015quantitative}). Moreover, the asymptotic independence is preserved under taking absolute values of the test statistics (see a proof in Proposition~\ref{prop:abs-preserve-independence}). Therefore, for Gaussian test statistics, both one-sided and two-sided p-values are asymptotically independent (in terms of their survival copulas), and hence follow MRV copulas with the independence limiting copula,  under both the global null and mean shift alternatives. 
\end{exmp}

\begin{exmp}[Multivariate $t$ test statistics]
Consider test statistics $(T_1,\dots,T_n)$ following a multivariate $t$ distribution with degrees of freedom $\nu > 0$, location parameter $\bm\mu$, and scale matrix $\Sigma$. As in the Gaussian case, copula invariance implies that the dependence structure of the p-values is determined by that of $(T_1,\dots,T_n)$ in the one-sided case and by that of $(|T_1|,\dots,|T_n|)$ in the two-sided case.

It is known that multivariate $t$ distributions induce $t$ copulas, which exhibit nontrivial upper-tail dependence and belong to the class of MRV copulas (see Theorem 2.3 of \cite{nikoloulopoulos2009extreme}).
This property is preserved under taking absolute values, that is, $(|T_1|,\dots,|T_n|)$ also admits an MRV copula (see a proof in Proposition \ref{prop:absolute-t-mrv-copula})

Therefore, when test statistics follow a multivariate $t$ distribution, the survival copulas of both one-sided and two-sided p-values are MRV copulas, under both
the global null and mean shift alternatives, reflecting nontrivial upper-tail dependence.
\end{exmp}

Beyond dependence structures induced by test statistics, one can also model the dependence of $(1-P_1,\dots,1-P_n)$ directly using parametric copula models that are capable of capturing joint tail behavior corresponding to small p-values. Important examples include survival Archimedean and extreme value copulas, both of which fall within the class of MRV copulas under suitable conditions.

\begin{exmp}[Survival Archimedean copulas]
Archimedean copulas provide a flexible class of parametric models for dependence. An Archimedean copula with generator $\phi$ is defined as (\cite{nelsen2006introduction}):
\[
C^\phi(u_1,\dots,u_n)
= \phi^{-1}\Big(\phi(u_1)+\cdots+\phi(u_n)\Big), \quad u_i \in [0,1].
\]
The associated survival copula $\overline{C}^\phi$ describes the joint upper-tail behavior.

If the generator $\phi$ is regularly varying at zero with index $\theta>0$, that is,
\begin{equation}
\label{eq:generator-rv}
\lim_{t\downarrow0}\frac{\phi(ty)}{\phi(t)}=y^{-\theta}, \quad \text{for all } y>0,
\end{equation}
then the survival copula $\overline{C}^\phi$ is an MRV copula. (see \cite{embrechts2009additivity}). Many commonly used Archimedean copulas satisfy this condition \citep{weng2012characterization}. 
A notable example is the survival Clayton copula, whose generator $\phi_\theta(t)=t^{-\theta}-1$ satisfies \eqref{eq:generator-rv}, and hence the survival Clayton copula is an MRV copula.
\end{exmp}

\begin{exmp}[Extreme value copulas]
Extreme value copulas correspond precisely to the limiting copulas of MRV copulas. By definition, extreme value copulas satisfy the stability property
\[
C(u_1^{1/t},\dots,u_n^{1/t})^t = C(u_1,\dots,u_n),
\]
and hence belong to the class of MRV copulas (see Eq.~(8.53) of \cite{beirlant2006statistics}).

A simple example is the logistic model with
\[
\ell(v_1,\dots,v_n)= (v_1^{1/\alpha}+\cdots+v_n^{1/\alpha})^\alpha, \quad 0<\alpha\le1,
\]
which induces the copula
\[
C(u_1,\dots,u_n)
= \exp\left\{-\ell(-\log u_1,\dots,-\log u_n)\right\}.
\]

Other well-known examples include the Galambos and Gumbel copulas.
\end{exmp}

\begin{exmp}[Mixtures of MRV copulas]
 MRV copulas can also arise under mixture models of p-values. Suppose $(P_1,\dots,P_n)$ follows a finite mixture distribution: with probability $\pi_k > 0$, the vector is drawn from component $k$, in which $(1-P_1,\dots,1-P_n)$ has an MRV copula $C_k$ with tail dependence function $D_k$ and stable tail dependence function $\ell_k$, for $k = 1, \dots , d$ with $\sum_{k=1}^d\pi_k = 1$. Assume further that the components share identical marginals, then the joint copula of $(1-P_1,\dots,1-P_n)$ is the mixture
$$C(u_1, \dots, u_n) = \sum_{k=1}^d \pi_k C_k(u_1, \dots, u_n),$$ with tail dependence function $D(u_1,\dots,u_n)
= \sum_{k=1}^d \pi_k\, D_k(u_1,\dots,u_n)$. Since each $C_k$ is MRV, we have
$$\lim_{s \downarrow 0} s^{-1} D(s v_1, \dots, s v_n) = \sum_{k=1}^d \pi_k \, \ell_k(v_1, \dots, v_n)$$
for any $v_1,\dots,v_n \ge 0$. Therefore, $C$ is again an MRV copula.

Such mixture structures can arise when the dependence among p-values is governed by an unobserved latent state, for instance, an unobserved batch or environmental condition that affects the joint dependence structure while preserving the marginal distribution of each p-value. As a stylized illustration, consider a common-shock model in which, with probability $\rho$, an unobserved event causes all test statistics to take a common extreme value, while with probability $1-\rho$ the statistics are mutually independent. The resulting copula of $(1-P_1, \dots, 1-P_n)$ is then a mixture
$C = \rho \, C^*_{\mathrm{cdep}} + (1-\rho) \, C^*_{\mathrm{indep}}$, which is an MRV copula.

\end{exmp}

\subsection{Connection between MRV copula and MRV distributions}
\label{subsec:mrv-copula-and distribution}

Finally, we discuss the connection between MRV copulas and MRV distributions. 
Intuitively, we will show that if the vector $(1 - P_1, \dots, 1 - P_n)$ follows an MRV copula, then the heavy-tailed transformed statistics $\mathbf{X} = (X_1, \dots, X_n)$, where each $X_i = Q_{F_{i}}(1 - P_i)$ for non-negative distribution $F_i$ with a regularly varying tail, follows an MRV distribution. We will also demonstrate that the tail probability of the average $\bar X_n = \sum_{i=1}^n X_i/n$ is analytically tractable.

There are many equivalent definitions of MRV distributions (see, for instance, Theorem 6.1 of \cite{resnick2007heavy}). We present an intuitive definition as follows:
\begin{defn}[MRV distributions (Chapter 8.4 of \cite{beirlant2006statistics})]
\label{def:mrv}
An $n$-dimensional random vector $\mathbf{X}=(X_1,\dots,X_n)$ with support $[\mathbf{0},\bm{\infty})$ has 
an \emph{MRV distribution}
if there exists a function $\lambda:(\mathbf{0}, \bm{\infty}) \rightarrow (0,\infty)$ such that for any $\mathbf{x}\in(\mathbf{0}, \bm{\infty})$:
\begin{equation}
\label{eq:define-mrv}
\lim_{t\to+\infty}\frac{\Pr(\mathbf{X}\in t[\mathbf{0},\mathbf{x}]^c)}{\Pr(\mathbf{X}\in t[\mathbf{0},\mathbf{1}]^c)}=\lambda(\mathbf{x}).
\end{equation}
It can be shown that the limiting function $\lambda$ must satisfy $\lambda(s\mathbf{x})=s^{-\gamma}\lambda(\mathbf{x})$ for some $\gamma >0$ and any $s>0$ (See Section S2.2). 
The class of MRV distributions with the parameter $\gamma$ is denoted by $\mathbf{X}\in\mathrm{MRV}_{-\gamma}$.
\end{defn}
Given an MRV copula $C$ and marginal distributions $F_1, \dots, F_n$, we can construct an MRV distribution under mild conditions:
\begin{restatable}{prop}{CopulaToMRV}
\label{prop:copula-to-mrv-distribution}
Let $C$ be an MRV copula, and let $F_1, \dots, F_n$ be marginal distributions supported on $\R_+$ such that
\begin{equation}
\label{eq:define-ci}
\lim_{t\to\infty}\overline{F}_i(t)/\overline{F}_0(t)=c_i\in [0,\infty),~~i=1,\dots,n,
\end{equation}
for some reference distribution $F_0\in\mathscr{R}_{-\gamma}$ with $\gamma>0$. If at least one $c_i > 0$, then the joint distribution with CDF
 \begin{equation*} 
 \label{eq:MRV-250218-1}
 F_{\mathbf{X}}(x_1,\dots,x_n) = C\left(F_1(x_1), \dots,F_n(x_n)\right)
\end{equation*} 
is an MRV distribution with parameter $\gamma$.
\end{restatable}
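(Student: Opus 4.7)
The plan is to verify the MRV distribution condition~\eqref{eq:define-mrv} directly, by rewriting the tail probability of $\mathbf{X}$ through the tail dependence function $D$ of the copula $C$, and then invoking the MRV copula limit together with the marginal comparison~\eqref{eq:define-ci}.

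First, for any $\mathbf{x}\in\prnspace{n}$ the tail probability factors through the copula:
$$
\Pr(\mathbf{X}\in t[\mathbf{0},\mathbf{x}]^c)=1-C(F_1(tx_1),\dots,F_n(tx_n))=D(\overline F_1(tx_1),\dots,\overline F_n(tx_n)).
$$
Set $s_t:=\overline F_0(t)\downarrow 0$ and $v_i^{(t)}:=\overline F_i(tx_i)/s_t$. Decomposing
$$
v_i^{(t)}=\frac{\overline F_i(tx_i)}{\overline F_0(tx_i)}\cdot\frac{\overline F_0(tx_i)}{\overline F_0(t)},
$$
the first factor tends to $c_i$ by~\eqref{eq:define-ci} and the second to $x_i^{-\gamma}$ by the regular variation of $\overline F_0$, so $v_i^{(t)}\to c_i x_i^{-\gamma}$ as $t\to\infty$.

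Next, the MRV copula property in \Cref{def:mrv-copula} gives $s^{-1}D(sv_1,\dots,sv_n)\to\ell(v_1,\dots,v_n)$ pointwise as $s\downarrow 0$. I would upgrade this to local uniform convergence on compact subsets of $[0,\infty)^n$ and then substitute $v=v^{(t)}$, $s=s_t$ to conclude
$$
\frac{\Pr(\mathbf{X}\in t[\mathbf{0},\mathbf{x}]^c)}{\overline F_0(t)}\ \longrightarrow\ \ell(c_1 x_1^{-\gamma},\dots,c_n x_n^{-\gamma}).
$$
Specializing to $\mathbf{x}=\mathbf{1}$ and dividing,
$$
\lambda(\mathbf{x}):=\lim_{t\to\infty}\frac{\Pr(\mathbf{X}\in t[\mathbf{0},\mathbf{x}]^c)}{\Pr(\mathbf{X}\in t[\mathbf{0},\mathbf{1}]^c)}=\frac{\ell(c_1 x_1^{-\gamma},\dots,c_n x_n^{-\gamma})}{\ell(c_1,\dots,c_n)}.
$$
The denominator is strictly positive: the Fr\'echet--Hoeffding bound $C(\mathbf{u})\le\min_i u_i$ gives $D(\mathbf{u})\ge\max_i u_i$, hence $\ell(\mathbf{v})\ge\max_i v_i$, and by hypothesis at least one $c_i>0$. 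The homogeneity $\lambda(s\mathbf{x})=s^{-\gamma}\lambda(\mathbf{x})$ follows from the $1$-homogeneity of $\ell$, placing $\mathbf{X}\in\mathrm{MRV}_{-\gamma}$.

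The main obstacle is the uniform-convergence upgrade of $s^{-1}D(sv)\to\ell(v)$, which is needed because the argument $v^{(t)}$ itself varies with $t$. I would justify it via (i) coordinate-wise monotonicity of $D$ (inherited from the copula), (ii) continuity of $\ell$ on $[0,\infty)^n$ through the integral representation~\eqref{eq:define-l} and dominated convergence against the finite spectral measure $H^*$, and (iii) a Dini-type argument: pointwise convergence of coordinate-monotone functions to a continuous limit on a compact set is uniform. This is a standard device in multivariate regular variation (cf.\ Chapter~6 of \cite{resnick2007heavy}). Coordinates with $c_i=0$ cause no trouble, since continuity of $\ell$ up to the boundary $v_i=0$ lets such coordinates drop out of the limit without affecting the argument.
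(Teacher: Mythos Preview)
Your proof is correct and follows essentially the same route as the paper: both rewrite the tail probability through the copula's tail dependence function, identify the limit as $\ell(c_1x_1^{-\gamma},\dots,c_nx_n^{-\gamma})/\ell(c_1,\dots,c_n)$, and read off the $(-\gamma)$-homogeneity from that of $\ell$. The only minor technical difference is where uniformity is invoked---the paper uses uniform convergence of the regularly varying marginals together with copula monotonicity to substitute $\overline F_i(tx_i)\approx c_ix_i^{-\gamma}\overline F_0(t)$ directly, whereas you instead upgrade $s^{-1}D(sv)\to\ell(v)$ to local uniform convergence via a P\'olya/Dini-type argument and then plug in the converging $v^{(t)}$; both devices are standard and equivalent here.
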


\begin{rem}
$c_i$'s defined in \eqref{eq:define-ci} are unique up to a constant factor, and the specific constant is determined by the choice of reference distribution $F_0$. For $\mathbf{X}$  with an MRV distribution, we fix the reference distribution $F_0(t)=1-\Pr(\mathbf{X}\in t[\mathbf{0},\mathbf{1}]^c)$ in the following main text and supplementary materials.
\end{rem}

\Cref{prop:copula-to-mrv-distribution} implies that an MRV distribution can arise from an MRV copula even if only one marginal distribution has a regularly varying tail. 
Conversely,  suppose that \({\bf X}=(X_1,\ldots,X_n)\in \mathrm{MRV}_{-\gamma}\) has
continuous marginal distributions and non-negligible marginal tails, in the sense that
$\lim_{t\to\infty}
    {\mathbb P(X_i>t)}/
         {\mathbb P(\mathbf{X}\in t[0,\mathbf 1]^c)}
    >0$,
     $   i=1,\ldots,n .$ 
Then   the copula of $\mathbf{X}$ is an MRV copula
(see Corollary 5.18 in \citet{resnick2008extreme} and Eq.~(8.79) of \citet{beirlant2006statistics}).  
We can then characterize tractable approximations for the tail probability of $\bar X_n = \sum_{i=1}^n X_i/n$ in terms of the associated spectral measure $H^*$:
\begin{restatable}{prop}{SumTailGeneralNorm}
\label{prop:sum-tail-general-norm}
Let $\mathbf{X}=(X_1,\dots,X_n)\in{\rm MRV}_{-\gamma}$, $\gamma>0$, be an $\R_+^n$-valued random vector and have an MRV copula. Denote $H^*$ the spectral measure of $\mathbf{X}$'s copula. 
Then
\begin{equation}
\label{eq:mrv-tail-general-norm}
h(\gamma,H^*):=\lim_{t\to\infty}\frac{\Pr(\bar X_n>t)}{\frac{1}{n}\sum_{i=1}^n\Pr(X_i>t)}
=n^{1-\gamma}\int_{\punitsphereone{n}}\frac{\left(\sum_{i=1}^n(c_i\theta_i)^{1/\gamma}\right)^\gamma}{\sum_{i=1}^nc_i} H^*(\mathrm{d}\bm{\theta}),
\end{equation}
where  
\[
c_i=\lim_{t\to\infty}\frac{\Pr(X_i>t)}{\Pr(\mathbf{X}\in t[\mathbf{0},\mathbf{1}]^c)}\in[0,\infty),~~i=1,\dots,n.
\]
\end{restatable}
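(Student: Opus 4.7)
The plan is to reduce the tail ratio to a single evaluation of the MRV exponent measure $\mu$ of $\mathbf{X}$ on the half-space $A'=\{\mathbf{x}\in\R_+^n:\sum_{i=1}^n x_i>1\}$, and then compute $\mu(A')$ via a polar representation driven by the spectral measure $H^*$.

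First I would carry out a routine reduction: using $\{\bar X_n>t\}=\{\mathbf{X}\in nt\cdot A'\}$, the MRV definition (\Cref{def:mrv}), and the homogeneity $\mu(sB)=s^{-\gamma}\mu(B)$,
$$\lim_{t\to\infty}\frac{\Pr(\bar X_n>t)}{\Pr(\mathbf{X}\in t[\mathbf{0},\mathbf{1}]^c)}=n^{-\gamma}\mu(A'),\qquad \lim_{t\to\infty}\frac{\Pr(X_i>t)}{\Pr(\mathbf{X}\in t[\mathbf{0},\mathbf{1}]^c)}=c_i.$$
Dividing, $h(\gamma,H^*)=n^{1-\gamma}\mu(A')/\sum_{i=1}^n c_i$, so the proposition reduces to computing $\mu(A')$ in terms of $H^*$ and the $c_i$'s.

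The crux would be establishing the polar decomposition
$$\mu(B)=\int_{\punitsphereone{n}}\!\int_0^{\infty}\mathbf{1}\{r\bm{\psi}(\bm{\theta})\in B\}\,\gamma r^{-\gamma-1}\,\mathrm{d}r\,H^*(\mathrm{d}\bm{\theta}),\qquad\psi_i(\bm{\theta}):=(c_i\theta_i)^{1/\gamma},$$
for every Borel $B\subset[\mathbf{0},\bm{\infty})\setminus\{\mathbf{0}\}$ bounded away from the origin. I would prove this by checking agreement on the family of rectangle complements $[\mathbf{0},\mathbf{a}]^c$ with $\mathbf{a}>\mathbf{0}$, which is known to determine the MRV exponent measure (Chapter~8.4 of \cite{beirlant2006statistics}). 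On the LHS, combining $\overline{F}_i(ta_i)/\Pr(\mathbf{X}\in t[\mathbf{0},\mathbf{1}]^c)\to c_i a_i^{-\gamma}$ with the MRV copula assumption (\Cref{def:mrv-copula}) yields $\mu([\mathbf{0},\mathbf{a}]^c)=\ell(c_1 a_1^{-\gamma},\dots,c_n a_n^{-\gamma})$, which by \eqref{eq:define-l} equals $\int_{\punitsphereone{n}}\max_i c_i\theta_i a_i^{-\gamma}\,H^*(\mathrm{d}\bm{\theta})$. On the RHS, the inner condition becomes $r>\min_i a_i/(c_i\theta_i)^{1/\gamma}$, and integrating $\gamma r^{-\gamma-1}$ over $r$ reproduces the same integrand. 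Agreement on this determining family extends to all Borel $B$.

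With the polar representation secured, evaluation at $B=A'$ is immediate: the constraint collapses to $r\sum_i(c_i\theta_i)^{1/\gamma}>1$, so integrating $\gamma r^{-\gamma-1}$ from $1/\sum_i(c_i\theta_i)^{1/\gamma}$ to $\infty$ produces $(\sum_i(c_i\theta_i)^{1/\gamma})^\gamma$, giving $\mu(A')=\int_{\punitsphereone{n}}(\sum_i(c_i\theta_i)^{1/\gamma})^\gamma H^*(\mathrm{d}\bm{\theta})$; substituting into $h(\gamma,H^*)=n^{1-\gamma}\mu(A')/\sum_i c_i$ yields the claimed formula. I expect the main obstacle to be the polar-decomposition step: although its statement is natural, the rigorous argument requires handling directions $\bm{\theta}$ on which some $c_i\theta_i$ vanish (so $\bm{\psi}(\bm{\theta})$ lies on a coordinate subspace of $\R_+^n$) and invoking the appropriate uniqueness result for the exponent measure determined by $\ell$. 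All other steps are essentially computational once this representation is in hand.
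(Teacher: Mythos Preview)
Your proposal is correct and follows essentially the same route as the paper: both reduce the ratio to evaluating the exponent measure on $\{\|\mathbf{x}\|_1>n\}$ and compute it via a polar/spectral representation tied to $H^*$ through the coordinate map $\bm{\theta}\mapsto(\mathbf{c}\odot\bm{\theta})^{1/\gamma}$. The only difference is packaging---the paper first passes to a standardized index-$1$ exponent measure $\nu_0$ and cites its polar decomposition $\nu_0\circ T^{-1}(\mathrm{d}r,\mathrm{d}\bm{\theta})=r^{-2}\,\mathrm{d}r\,H^*(\mathrm{d}\bm{\theta})$ from \cite{barbe2006tail,resnick2007heavy}, then pulls back, whereas you verify the same representation of $\mu$ directly on rectangle complements; after the change of radial variable $r=s^{1/\gamma}$ the two computations coincide line by line.
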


\section{Combination Test under MRV Copula}
\label{sec:comb-test-under-mrv}
In this section, we investigate the asymptotic validity and power of the weighted heavy-tailed combination test defined in \Cref{def:weighted-combination-test}, under the setting where the dependence among p-values in the lower tail (their joint behavior as they approach zero) is characterized by an MRV copula. Our analysis focuses on the asymptotic setting where the nominal significance level $\alpha \to 0$ and the number of hypotheses $n$ is fixed.

\subsection{Asymptotic Validity}

To build intuition,  let us consider the combination test with weights $\omega_i = 1$ for each $i$. If each p-value $P_i$ is uniformly distributed on $[0,1]$ under the global null, then the CDF of the joint distribution of $(1-P_1, \dots, 1-P_n)$ is a copula. If this copula is an MRV copula, then following \Cref{prop:copula-to-mrv-distribution}, the heavy-tail-transformed statistics $(X_1, \dots, X_n)$, where $X_i = Q_{F_{i}}(1 - P_i)$, follows an MRV distribution. 
As a result, the tail probability of their average can be characterized using \Cref{prop:sum-tail-general-norm}, enabling us to analyze type-I error control of the test in the limit $\alpha \to 0$.
More generally, 
for a weighted heavy-tailed combination test defined in \Cref{def:weighted-combination-test}, define the \emph{limiting scaled type-I error} as
\begin{equation}
\label{eq:limiting_error}
q(\gamma)=\lim_{\alpha\downarrow0}\frac{\Pr_{H_0^\mathrm{global}}\left(P_\mathrm{comb}^{F, \bm \omega}\leq \alpha\right)}{\alpha}.
\end{equation}
We have the following result:
\begin{restatable}{thm}{AsymptoticValidity}
\label{thm:combination-test-valid}
Assume that under the global null, the joint distribution of $(1-P_1, \dots, 1-P_n)$ follows an MRV copula and has standard
uniform marginals. Then the limiting scaled type-I error \( q(\gamma) \) of the weighted heavy-tailed combination test depends on the transformation function $F$ only through its tail index $\gamma$, and is non-decreasing in $\gamma$; that is, 
\[
q(\gamma_1) \geq q(\gamma_2), \quad \text{for all } \gamma_1 > \gamma_2 > 0.
\]
Moreover, equality holds if and only if $(1 - P_1, \dots, 1 - P_n)$ is asymptotically independent. 
In particular, $q(1) =  1$, and hence the test is asymptotically valid for all $\gamma \leq 1$, i.e.,
\[
q(\gamma) \leq 1, \quad \text{for all } \gamma \leq 1.
\]
\end{restatable}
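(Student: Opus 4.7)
The plan is to derive a closed-form expression for $q(\gamma)$ in terms of the spectral measure $H^*_C$ of the copula $C$, from which monotonicity, the equality condition, and asymptotic validity will all follow by inspection.

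First I will recast the test as a tail event. Since $F$ is strictly increasing on its support, $\{P_\mathrm{comb}^{F,\boldsymbol\omega}\le\alpha\} = \{\bar X_{n,\boldsymbol\omega} > Q_F(1-\alpha/n^{1-\gamma})\}$, so writing $t_\alpha = Q_F(1-\alpha/n^{1-\gamma})$ (so $\alpha = n^{1-\gamma}\overline F(t_\alpha)$ and $t_\alpha\to\infty$) gives $q(\gamma) = \lim_{t\to\infty}\Pr(\bar X_{n,\boldsymbol\omega}>t)/(n^{1-\gamma}\overline F(t))$. Set $Y_i = X_{i,\omega_i}$. Under the null, $P_i\sim\mathrm{Unif}(0,1)$ gives $\Pr(Y_i>t)=\omega_i\overline F(t)$ for large $t$, and the joint upper tail equals the tail dependence function of $C$ at rescaled arguments: $\Pr(\mathbf Y\in t[\mathbf 0,\mathbf x]^c) = D(\omega_1\overline F(tx_1),\ldots,\omega_n\overline F(tx_n))$. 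Combining regular variation of $\overline F$ with the MRV property of $C$ yields $\lim_t\Pr(\mathbf Y\in t[\mathbf 0,\mathbf x]^c)/\overline F(t) = \ell(\omega_1 x_1^{-\gamma},\ldots,\omega_n x_n^{-\gamma})$, so $\mathbf Y\in\mathrm{MRV}_{-\gamma}$ (as in Proposition~\ref{prop:copula-to-mrv-distribution}), and its exponent measure $\nu_\mathbf Y$ is determined on rectangles by the stable tail dependence function $\ell$ of $C$.

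Next I will compute $q(\gamma)$ in closed form. Since $\{\bar X_{n,\boldsymbol\omega} > t\}=\{\sum Y_i > nt\}$ and polar $L_1$-decomposition gives $\nu_\mathbf Y(\{\sum y_i > n\}) = n^{-\gamma}\sigma_\mathbf Y(\punitsphereone{n})$ where $\sigma_\mathbf Y$ is the spectral measure, I will extract $\sigma_\mathbf Y(\punitsphereone{n})$ by matching two representations of $\nu_\mathbf Y([\mathbf 0,\mathbf x]^c)$: the polar form $\int\max_i(\theta_i/x_i)^\gamma\sigma_\mathbf Y(d\boldsymbol\theta)$ and the copula form $\int\max_i(\omega_i\phi_i/x_i^\gamma) H^*_C(d\boldsymbol\phi)$. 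Under the change of variables $\theta_i = (\omega_i\phi_i)^{1/\gamma}/k(\boldsymbol\phi)$ with $k(\boldsymbol\phi)=\sum_j(\omega_j\phi_j)^{1/\gamma}$, $\sigma_\mathbf Y$ is identified as the pushforward of $k^\gamma H^*_C$, giving
\[
q(\gamma) \;=\; \frac{1}{n}\int_{\punitsphereone{n}} \Bigl(\sum_{i=1}^n(\omega_i\phi_i)^{1/\gamma}\Bigr)^\gamma H^*_C(d\boldsymbol\phi)
\]
(equivalently, Proposition~\ref{prop:sum-tail-general-norm} produces the same formula after reconciling spectral-measure conventions). In particular $q$ depends on $F$ only through $\gamma$.

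Finally I will read off the remaining claims from this formula. The integrand is $\|(\omega_1\phi_1,\ldots,\omega_n\phi_n)\|_{1/\gamma}$, which is non-decreasing in $\gamma$ by the standard monotonicity of $\ell^p$-norms in $p$, so $q(\gamma_1)\ge q(\gamma_2)$ whenever $\gamma_1>\gamma_2$. Strict inequality fails iff the integrand is $\gamma$-independent $H^*_C$-a.e., iff $\boldsymbol\phi$ has at most one positive coordinate $H^*_C$-a.e., iff $H^*_C$ is supported on $\{\mathbf e_1,\ldots,\mathbf e_n\}$; the marginal constraint $\int\phi_i H^*_C(d\boldsymbol\phi)=\ell(\mathbf e_i)=1$ (from $D(s\mathbf e_i)/s=1$ under uniform marginals) then forces $H^*_C = \sum_i\delta_{\mathbf e_i}$, precisely the asymptotic-independence case of Definition~\ref{def:asymptotic-independence}. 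At $\gamma=1$, $q(1) = \frac{1}{n}\sum_i\omega_i\int\phi_i H^*_C(d\boldsymbol\phi) = \frac{1}{n}\sum_i\omega_i = 1$, so $q(\gamma)\le 1$ for $\gamma\le 1$ by monotonicity, proving asymptotic validity. The hardest part will be the identification of $\sigma_\mathbf Y$ in terms of $H^*_C$, which requires both matching rectangle-tail asymptotics in two parameterizations and invoking uniqueness of the spectral measure on the simplex; a secondary issue is verifying that the regular-variation limits of $\overline F$ pass uniformly through $\ell$ so that slowly varying corrections vanish in the limit.
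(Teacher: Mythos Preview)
Your proposal is correct and follows essentially the same route as the paper: recast the rejection event as a tail probability, show the transformed vector is $\mathrm{MRV}_{-\gamma}$, extract the closed-form integral $q(\gamma)=\frac{1}{n}\int(\sum_i(\omega_i\phi_i)^{1/\gamma})^\gamma H^*(d\boldsymbol\phi)$, and then read off monotonicity (power-mean inequality), $q(1)=1$ (marginal constraint $\int\phi_i H^*=1$), and the equality case ($H^*$ supported on vertices). The only minor point you glossed over is that Proposition~\ref{prop:copula-to-mrv-distribution} and the MRV framework require the marginals to live on $[0,\infty)$; since $F$ has support $[c,\infty)$ with possibly $c<0$, the paper first shifts to $\tilde F(x)=F(x-c)$ and checks the shift does not affect the limiting ratio---you should make this explicit. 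Your spectral-measure identification via the pushforward of $k^\gamma H^*_C$ is exactly what the paper's Proposition~\ref{prop:sum-tail-general-norm} packages, so citing that proposition (as you note) is the cleaner route.
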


\Cref{thm:combination-test-valid} 
establishes the asymptotic validity of the weighted heavy-tailed combination test for all $\gamma \leq 1$ under a broad class of dependence structures, including those that violate asymptotic independence assumptions typically required in earlier literature. In particular, asymptotic validity is guaranteed whenever the joint distribution of the p-values exhibits lower-tail dependence in the limit. Moreover, the limiting type-I error reaches the nominal level, i.e., $q(1)=1$, when $\gamma = 1$, regardless of the specific asymptotic dependence structure among the p-values. In contrast, the test becomes asymptotically conservative for $\gamma < 1$ and invalid for $\gamma > 1$ when the p-values are not asymptotically independent.

\begin{rem}
When the p-values are conservative, including discrete p-values such as those arising from permutation tests, that is, each base p-value $P_i$ is stochastically larger than a standard uniform distribution under the null, the asymptotic validity of the weighted heavy-tailed combination test still holds.  
Specifically, by Theorems 3.3.5 and 3.3.8 of \cite{muller2002comparison},
there exists a random vector $(\tilde P_1,\dots,\tilde P_n)$ with  standard uniform marginals and the same copula as $(P_1,\dots,P_n)$ such that 
$P_i\ge \tilde P_i$ for all $i$. 
Since the combined p-value of the combination test is non-decreasing in each base p-value, 
the type-I error of the combination test applied to $(P_1,\dots,P_n)$ is no larger than that applied to   $(\tilde  P_1,\dots,\tilde  P_n)$, whose asymptotic validity is guaranteed in \Cref{thm:combination-test-valid}. 
\end{rem}

\begin{restatable}{cor}
{ValidCauchy}
\label{cor:CCT-validity}
    Under the same assumptions as in \Cref{thm:combination-test-valid}, the combined 
p-value $P_{\mathrm{comb}}^{\mathrm{CCT}}$
 of the standard Cauchy combination test (\Cref{exmp:truncated_cauchy}) satisfies
$$\lim_{\alpha \downarrow 0} \frac{P_{H_0^{\mathrm{global}}}(P_{\mathrm{comb}}^{\mathrm{CCT}} \leq \alpha)}{\alpha} \leq 1.$$
\end{restatable}

Although the standard Cauchy distribution does not belong to $F\in\mathscr{R}_{-1}^*$, as its support is unbounded below, the corollary follows from \Cref{thm:combination-test-valid} because the Cauchy and truncated Cauchy transformations share the same right-tail behavior, so the validity of the truncated Cauchy combination test transfers to the unbounded case. The standard Cauchy combination test has been studied in detail in the recent parallel work \cite{chakraborty2025universal}, which shows that it is asymptotically conservative under many tail dependence structures, including the multivariate $t$ copula.

In addition, we characterize the bounds on the limiting scaled type-I error for heavy-tailed combination tests. This limit is determined by the lower-tail dependence structure among the p-values, with the bounds achieved under asymptotic complete dependence and asymptotic independence.

\begin{restatable}{thm}{ErrorCompleteDependence}
\label{thm:error-complete-dependence}
Under the same assumptions as in \Cref{thm:combination-test-valid}, the limiting scaled type-I error $q(\gamma)$ as defined in \eqref{eq:limiting_error} satisfies
\begin{equation*}
q(\gamma)\in%
\begin{cases}
\left[\frac{1}{n}\left(\sum_{i=1}^n\omega_i^{1/\gamma}\right)^{\gamma},1\right],~~\gamma\leq 1,\\[5pt]\left[1,\frac{1}{n}\left(\sum_{i=1}^n\omega_i^{1/\gamma}\right)^{\gamma}\right],~~\gamma \geq 1.
\end{cases} 
\end{equation*}
Moreover, when $\gamma \neq 1$, the lower (or upper) bound $\frac{1}{n}\left(\sum_{i=1}^n \omega_i^{1/\gamma}\right)^{\gamma}$ is achieved if and only if $(1 - P_1, \dots, 1 - P_n)$ is asymptotically completely dependent, while the bound $1$ is achieved if and only if $(1 - P_1, \dots, 1 - P_n)$ is asymptotically independent.
\end{restatable}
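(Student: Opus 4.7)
The plan is to reduce $q(\gamma)$ to a single integral against the spectral measure of the copula and then extract each bound from that integral using two complementary inequalities. Under the assumptions inherited from \Cref{thm:combination-test-valid}, the proof of that theorem already yields, via \Cref{prop:copula-to-mrv-distribution,prop:sum-tail-general-norm} applied to the heavy-tailed transformed vector $(X_{1,\omega_1},\ldots,X_{n,\omega_n})$ (whose marginal tail is $\omega_i\overline F$), the spectral representation
$$
q(\gamma) \;=\; \frac{1}{n}\int_{\punitsphereone{n}} g(\bm{\theta})\, H^*(\intd\bm{\theta}),\qquad g(\bm{\theta}):=\Bigl(\sum_{i=1}^n (\omega_i \theta_i)^{1/\gamma}\Bigr)^{\gamma},
$$
where $H^*$ is the spectral measure of the underlying MRV copula, normalized by $\int \theta_i\,H^*(\intd\bm{\theta})=1$ for each $i$ (so that $H^*(\punitsphereone{n})=n$). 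Everything below works directly with this formula together with the explicit spectral measures at the two extremes: $H^*=\sum_{i=1}^n\delta_{\mathbf{e}_i}$ under asymptotic independence and $H^*=n\,\delta_{\mathbf{1}/n}$ under asymptotic complete dependence.

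For the bound equal to $1$, I compare $g$ pointwise with the linear form $\ell_0(\bm{\theta}):=\sum_i \omega_i \theta_i$. When $\gamma\le 1$, subadditivity of $x\mapsto x^{\gamma}$ on $\R_+$ gives $g(\bm{\theta})\le\sum_i ((\omega_i\theta_i)^{1/\gamma})^{\gamma}=\ell_0(\bm{\theta})$; when $\gamma\ge 1$, applying subadditivity to $x\mapsto x^{1/\gamma}$ instead yields $\ell_0(\bm{\theta})\le g(\bm{\theta})$. Integrating against $H^*$ and using $\int \theta_i\,H^*(\intd\bm{\theta})=1$ reduces $\int \ell_0\, H^*(\intd\bm{\theta})$ to $\sum_i\omega_i=n$, which gives $q(\gamma)\le 1$ when $\gamma\le 1$ and $q(\gamma)\ge 1$ when $\gamma\ge 1$. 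For $\gamma\ne 1$, the subadditivity inequality is strict whenever more than one coordinate of $\bm{\theta}$ is positive, so equality in the integral forces $\operatorname{supp}(H^*)\subseteq\{\mathbf{e}_1,\dots,\mathbf{e}_n\}$; combined with the normalization, this pins down $H^*=\sum_i\delta_{\mathbf{e}_i}$, which is asymptotic independence.

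For the other bound $n^{-1}\left(\sum_i\omega_i^{1/\gamma}\right)^{\gamma}$, I apply Jensen's inequality to the probability measure $\mu:=H^*/n$, whose barycenter is $\int \bm{\theta}\,\mu(\intd\bm{\theta})=\mathbf{1}/n$ by the normalization. The function $g$ is convex in $\bm{\theta}$ when $\gamma\le 1$, since $\|\cdot\|_{1/\gamma}$ is a norm for $1/\gamma\ge 1$ and $g$ is that norm composed with the linear map $\bm{\theta}\mapsto\bm{\omega}\circ\bm{\theta}$, and concave in $\bm{\theta}$ when $\gamma\ge 1$, using the classical fact that $\|\cdot\|_p$ is concave on $\R_+^n$ whenever $p\in(0,1]$. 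Jensen therefore delivers $q(\gamma)\ge g(\mathbf{1}/n)$ when $\gamma\le 1$ and $q(\gamma)\le g(\mathbf{1}/n)$ when $\gamma\ge 1$, and a direct substitution gives $g(\mathbf{1}/n)=n^{-1}\left(\sum_i\omega_i^{1/\gamma}\right)^{\gamma}$. For $\gamma\ne 1$, $g$ is strictly convex (resp.\ strictly concave) on the relative interior of the simplex, so Jensen equality forces $\mu$ to be concentrated at $\mathbf{1}/n$, i.e.\ $H^*=n\,\delta_{\mathbf{1}/n}$, the spectral measure of asymptotic complete dependence.

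The main technical point that I expect to require care is the concavity of $\|\cdot\|_{1/\gamma}$ on $\R_+^n$ for $\gamma\ge 1$, since it does not follow from a routine convex-composition rule: the outer map $u\mapsto u^{1/p}$ with $p=1/\gamma\le 1$ is convex, while the inner sum $\sum x_i^p$ is concave in $x$, and convex composed with concave is a priori indeterminate. I plan to deduce it from the reverse Minkowski inequality $\|x+y\|_p\ge\|x\|_p+\|y\|_p$ on $\R_+^n$ for $p\in(0,1]$, which, together with positive homogeneity, yields midpoint concavity and hence full concavity by continuity; alternatively, a short Hessian computation on the positive orthant confirms negative semi-definiteness. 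All remaining equality-case verifications are routine given the explicit forms of the spectral measures recalled in \Cref{def:asymptotic-independence,def:asymptotic-complete-dependence}.
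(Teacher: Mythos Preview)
Your proposal is correct and follows essentially the same route as the paper: both start from the spectral representation $q(\gamma)=\frac{1}{n}\int g(\bm\theta)\,H^*(\intd\bm\theta)$, obtain the bound $n^{-1}\big(\sum_i\omega_i^{1/\gamma}\big)^{\gamma}$ via Jensen's inequality applied to the probability measure $H^*/n$ with barycenter $\mathbf{1}/n$, and obtain the bound $1$ by comparing $g(\bm\theta)$ pointwise to the linear form $\sum_i\omega_i\theta_i$ (the paper phrases this as $\psi(\bm\theta)\le\sum_i\theta_i\psi(\mathbf e_i)$ from convexity on the simplex, whereas you phrase it as subadditivity of $x\mapsto x^{\gamma}$, but these are the same inequality). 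Your extra care about the concavity of $\|\cdot\|_{1/\gamma}$ on $\R_+^n$ for $\gamma\ge 1$ via reverse Minkowski is a useful addition, since the paper simply asserts ``$-\psi$ is convex'' without justification; one small sharpening is that strict convexity of $g$ in fact holds on the entire simplex (not just its relative interior), since equality in the $\ell_p$ triangle inequality for $p>1$ forces proportionality, which on the simplex means equality---this is what you need to rule out $H^*$ placing mass at two distinct boundary points in the Jensen equality case.
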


\Cref{thm:error-complete-dependence} is closely related to \Cref{prop:concordance-order-upper-and-lower-bound}, 
 which establishes that the limiting lower-tail dependence structure among p-values lies between asymptotic independence and asymptotic complete dependence under the MRV copula framework. For $\gamma \leq 1$, where the heavy-tailed combination tests are asymptotically valid, these two extreme cases correspond to the least and most conservative scenarios, respectively. 
As a special case, when all the weights are equal ($\omega_i = 1$), the bound under complete dependence simplifies to $n^{\gamma-1}$.

\subsection{Asymptotic Power}
\label{subsec:asymptotic-power}
Next, we investigate the power difference between heavy-tailed combination tests using different transformation parameters $\gamma$. A key determinant of this power is the strength and distribution of the signals under the alternative.
To formalize this, we define the set of dominating signals as follows:
\begin{defn}[Dominating signals]
Consider the p-value vector $\mathbf{P} = (P_1, \dots, P_n)$.  
For two p-values $P_i$ and $P_j$, we write $P_i \succ P_j$ if
\[
\lim_{t\downarrow0}\frac{\Pr(P_i < t)}{\Pr(P_j < t)}= 0.
\]
The set of dominating signals for $\mathbf{P}$ is then defined as
\begin{equation*}
I_{\mathbf P}=  \left\{i\in \{1,\dots, n\}: 
\nexists j \in \{1,\dots, n\}  \text{ such that } P_i \succ P_j\right\}.
\end{equation*}
\end{defn}
Intuitively, $I_{\mathbf P}$ consists of the indices of p-values that have the heaviest lower tails under the alternative, which will most likely dominate the combined test statistic. 
We also impose the following condition on the joint distribution of the base p-values:
\begin{assumption}
\label{asmp:p-alternative}
The base p-values $(P_1, \dots, P_n)$ under the true configuration of hypotheses satisfy:
\begin{enumerate}[label=(\roman*)]
    \item Each $P_i$ is continuously distributed;
    \item The vector $(1-P_1, \dots, 1-P_n)$ has an MRV copula;
    \item For any distinct $i,j\in I_{\mathbf{P}}$,  the relative tail probabilities are comparable: 
    \[\lim_{t\downarrow0}\frac{\Pr(P_i<t)}{\Pr(P_j< t)}\in(0,\infty);\]
    and for at least one $i\in I_{\mathbf{P}}$, there exists an $F\in\mathscr{R}_{-1}$ such that $Q_{F}(1-P_i)\in\mathscr{R}_{-\beta}$ for some $\beta\le1$. 
\end{enumerate}
\end{assumption}
\Cref{asmp:p-alternative} is mild and aligns with typical alternatives in hypothesis testing. Part~(i) is a routine continuity assumption. Part~(ii) assumes that the limiting lower-tail dependence structure among p-values remains well-defined under any true hypotheses configuration. Part~(iii) ensures that the transformed statistics $X_{i,\omega_i}$ corresponding to dominating signals retain regularly varying tails under the alternative and with any choice of transformation $F$ (See \Cref{lemma:assmp-1-iii-ensure-rv} for a proof). This is reasonable, as p-values under the alternative are often more concentrated near zero, which results in heavier-tailed transformed statistics than those under the null.

We now present two types of alternatives under which \Cref{asmp:p-alternative} holds:
\begin{exmp}[Alternative Types]\label{exmp:alternative_types}
Define $(U_1, \dots, U_n)$ to follow an MRV copula with standard uniform marginals. Let $\overset{d}{=}$ denote equality in distribution. We consider two types of alternative hypotheses: 
\label{def:alternative-types}
$\ $
\begin{enumerate}[label=(\alph*)]
\item \emph{Type-A alternative}:
\begin{equation*}
    (1-P_1,\dots,1-P_n) \overset{d}{=}(G_1(G^{-1}_1(U_1)+\mu_1),\dots, G_n(G^{-1}_n(U_n)+\mu_n)),
\end{equation*}
where $\mu_1,\dots,\mu_n\ge0$ and $G_1,\dots,G_n$ are continuous CDFs satisfying either (i) a tail-shift invariance condition, $\lim_{x \to \infty} \overline{G}_i(x - \mu)/\overline{G}_i(x) \in (0,\infty)$ for all $\mu > 0$ and all $i$, or (ii) that each $G_i$ is the standard normal distribution. These conditions are satisfied by many common distributions, including the normal, exponential, and Student's $t$ distributions. %
Intuitively, the Type-A alternative presents signals through location shifts, such as the common one-sided location test: 
\[
H_0:\mu_i\leq 0~\text{for all }i=1,\dots,n,~~H_1:\mu_i>0~\text{for some }i=1,\dots,n.
\]
Under condition (i), all coordinates are equally dominant, and $I_{\mathbf{P}} = \{1, \dots, n\}$. Under condition (ii), only those indices with the largest $\mu_i$ values dominate, so $I_{\mathbf{P}} = \{i: \mu_i = \max_j \mu_j\}$.
\item \emph{Type-B alternative}:
\begin{equation*}
    (P_1,\dots,P_n) \overset{d}{=} ((1-U_1)^{\beta_1},\dots,(1-U_n)^{\beta_n}),
\end{equation*}
where $\beta_1,\dots,\beta_n \ge 1$. 
Each marginal $P_i$ then follows a $\mathrm{Beta}(1/\beta_i, 1)$ distribution. This model, originally proposed in \citet{Sellke01022001}, serves as a simple parametric form for p-values under the alternative. In this setting, the set of dominating signals is given by $I_{\mathbf{P}} = \{i: \beta_i = \max_j \beta_j\}$.
\end{enumerate}    
Both types of alternatives satisfy the regularity conditions in \Cref{asmp:p-alternative}. We provide a detailed discussion in \Cref{sec:p-alternative}.
\end{exmp}

With \Cref{asmp:p-alternative}, we can apply \Cref{prop:sum-tail-general-norm} to analyze the asymptotic power of the heavy-tailed combination test under alternative hypotheses.
Because the power of any test vanishes as $\alpha \to 0$, we compare different tests through a \emph{limiting scaled power} that factors out marginal effects:
\begin{equation}
\label{eq:limiting-power-ratio}
\tilde{q}(\gamma)=\lim_{\alpha\downarrow0}\frac{\Pr\left(P_{\mathrm{comb}}^{F,\bm\omega}\le\alpha\right)}{\sum_{i=1}^n\Pr\left(P_i\le{\omega_i\alpha}/{n}\right)}.
\end{equation}
The denominator, which is the sum of marginal probabilities, is not affected by the choice of $F$ and thus $\gamma$. Hence, any change in $\tilde q(\gamma)$ with $\gamma$ directly reflects a change in power.

We have the following result on the power comparison. 
\begin{restatable}{thm}{PowerIncrease}
\label{thm:power-increasing}
Suppose that \Cref{asmp:p-alternative} holds. 
The limiting scaled power $\tilde{q}(\gamma)$  depends on the transformation function $F$ only through its tail index $\gamma$, and is non-decreasing in $\gamma$;  that is,
\[
\tilde{q}(\gamma_1) \geq \tilde{q}(\gamma_2), \quad \text{for all } \gamma_1 > \gamma_2 > 0.
\]
Moreover, the inequality is strict if and only if 
the dominating-signal set $I_{\mathbf P} = \{i_1,\dots, i_S\}$ has at least two elements, and
the subvector $(1-P_{i_1}, \dots, 1- P_{i_S})$ is not asymptotically independent. 
\end{restatable}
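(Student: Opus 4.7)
The plan is to reduce $\tilde q(\gamma)$ to an application of \Cref{prop:sum-tail-general-norm} via an MRV representation of the transformed statistics, and then to reduce monotonicity in $\gamma$ to the classical fact that $L^p$-norms are non-increasing in $p$. The first task is to show that under \Cref{asmp:p-alternative}, the lower tail of $P_i$ is regularly varying at $0$ with a common exponent $\kappa\in(0,1]$ for every $i\in I_{\mathbf P}$. The existence statement in part~(iii) says that $Q_{F_0}(1-P_{i^*})\in\mathscr{R}_{-\beta}$ for some $F_0\in\mathscr{R}_{-1}$ and $\beta\le 1$; via the change of variable $u=\overline F_0(x)$ this translates into $\Pr(P_{i^*}<u)=u^{\kappa}L_{i^*}(u)$ with $\kappa:=\beta$ and slowly varying $L_{i^*}$. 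The comparability in~(iii) then forces $\Pr(P_i<u)\sim a_i u^{\kappa}L(u)$ with the same $\kappa$ and $L$ (up to constants $a_i>0$) for every $i\in I_{\mathbf P}$. From $\Pr(X_{i,\omega_i}>x)=\Pr(P_i<\omega_i\overline F(x))$ one obtains $X_{i,\omega_i}\in\mathscr{R}_{-\gamma\kappa}$ for $i\in I_{\mathbf P}$, and a short argument using the regular variation of the dominating marginals shows that the $o(\cdot)$ relation $\Pr(P_i<u)=o(\Pr(P_j<u))$ is preserved under the $\omega$-rescaling, so non-dominating coordinates have asymptotically lighter tails. By \Cref{prop:copula-to-mrv-distribution}, $(X_{1,\omega_1},\dots,X_{n,\omega_n})\in\mathrm{MRV}_{-\gamma\kappa}$, with spectral measure $H^*$ concentrated on the sub-simplex of $I_{\mathbf P}$-coordinates (so $c_i>0$ for $i\in I_{\mathbf P}$ and $c_i=0$ otherwise).

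Setting $t_\alpha=Q_F(1-\alpha/n^{1-\gamma})$ so that $\overline F(t_\alpha)=\alpha/n^{1-\gamma}$ and $\Pr(X_{i,\omega_i}>t_\alpha)=\Pr(P_i<\omega_i\alpha/n^{1-\gamma})$, \Cref{prop:sum-tail-general-norm} applied to this vector yields
\begin{equation*}
\Pr(\bar X_{n,\bm\omega}>t_\alpha)\sim n^{-\gamma\kappa}\Bigl(\int_{\punitsphereone{n}}\frac{\bigl(\sum_{i\in I_{\mathbf P}}(c_i\theta_i)^{1/(\gamma\kappa)}\bigr)^{\gamma\kappa}}{\sum_{i\in I_{\mathbf P}}c_i}\,H^*(\intd\bm\theta)\Bigr)\sum_{i\in I_{\mathbf P}}\Pr(P_i<\omega_i\alpha/n^{1-\gamma}).
\end{equation*}
Using the slow variation of $L$, the ratio $\sum_i\Pr(P_i<\omega_i\alpha/n^{1-\gamma})/\sum_i\Pr(P_i\le\omega_i\alpha/n)$ tends to $n^{\gamma\kappa}$ as $\alpha\downarrow 0$, and after cancellation this gives the closed form
\[
\tilde q(\gamma)=\int_{\punitsphereone{n}}\frac{\bigl(\sum_{i\in I_{\mathbf P}}(c_i\theta_i)^{1/(\gamma\kappa)}\bigr)^{\gamma\kappa}}{\sum_{i\in I_{\mathbf P}}c_i}\,H^*(\intd\bm\theta),
\]
which depends on $F$ only through $\gamma$, proving the first claim.

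For monotonicity, fix $\bm\theta$ and set $a_i=c_i\theta_i$. The reparameterization $p=1/(\gamma\kappa)$ identifies $\bigl(\sum_i a_i^{1/(\gamma\kappa)}\bigr)^{\gamma\kappa}$ with the $L^p$-norm $\|a\|_p$, which is non-increasing in $p$ on $(0,\infty)$; hence the integrand above is non-decreasing in $\gamma$, and integration preserves the order, giving $\tilde q(\gamma_1)\ge\tilde q(\gamma_2)$ for $\gamma_1>\gamma_2>0$. For strict monotonicity, $\|a\|_p$ is strictly decreasing in $p$ precisely when $a$ has at least two strictly positive coordinates, so the inequality is strict iff the set of $\bm\theta\in\punitsphereone{n}$ with two or more positive coordinates inside $I_{\mathbf P}$ carries positive $H^*$-mass. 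If $|I_{\mathbf P}|=1$ that set is empty; if $|I_{\mathbf P}|\ge 2$, then by the spectral characterization in \Cref{def:asymptotic-independence}, $H^*$ is concentrated on the axes $\{\mathbf e_i:i\in I_{\mathbf P}\}$ exactly when $(1-P_i)_{i\in I_{\mathbf P}}$ is asymptotically independent, yielding both directions of the equivalence.

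The main hurdle is the first step: extracting a common regular-variation index $\kappa$ for the dominating p-values from the abstract form of Assumption~(iii) and verifying that the $o(\cdot)$ relation between dominating and non-dominating signals survives the weight-rescaling, so that the non-dominating coordinates genuinely drop out in the MRV limit. Once these are in place, the remaining steps are direct bookkeeping around \Cref{prop:sum-tail-general-norm} together with a standard application of $L^p$-norm monotonicity.
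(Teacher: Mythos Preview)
Your proposal is correct and follows the same route as the paper: extract regular variation of $\Pr(P_i<u)$ at $0$ for dominating indices (the paper packages this as \Cref{lemma:assmp-1-iii-ensure-rv}), apply \Cref{prop:copula-to-mrv-distribution} and \Cref{prop:sum-tail-general-norm} to obtain the closed form $\tilde q(\gamma)=\int (\sum_i (c_i\theta_i)^{1/(\gamma\beta)})^{\gamma\beta}\,H^*(\intd\bm\theta)\big/\sum_i c_i$, and conclude via power-mean monotonicity (Hardy--Littlewood--P\'olya). Two small slips to clean up in execution: \Cref{prop:copula-to-mrv-distribution} requires $\R_+$-supported marginals, so you must first shift $F$ to support $[0,\infty)$ via $\tilde F(x)=F(x-c)$ as the paper does; and the $H^*$ in \Cref{prop:sum-tail-general-norm} is the spectral measure of the \emph{copula} of $(1-P_1,\dots,1-P_n)$, which is not ``concentrated on the $I_{\mathbf P}$-sub-simplex''---the non-dominating coordinates disappear from the integrand because $c_i=0$, not because $H^*$ ignores them (this distinction matters for the strict-inequality equivalence, where you need $H^*$-mass on $\{\theta_i\theta_j>0\text{ for some }i\ne j\in I_{\mathbf P}\}$, not on the full complement of the axes).
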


Together, \Cref{thm:combination-test-valid,thm:power-increasing} explain a key empirical observation \citep{gui2023aggregating}: both the type-I error and power of the combination test rise with the transformation index $\gamma$. %
Based on this insight, we recommend using a transformation with parameter $\gamma = 1$ in practice, as it offers a favorable trade-off between maximizing power and ensuring asymptotic validity.

\subsection{Comparison with the Bonferroni Test}
\label{subsec:comp-with-bonferroni}
In this subsection, we compare the heavy-tailed combination tests
and the Bonferroni test, which is widely recognized for its validity under arbitrary dependence structures. 
Under the MRV framework, we aim to characterize the dependence structures where the combination tests have a real advantage over the Bonferroni test.  
To facilitate our analysis, we first define the weighted Bonferroni test with pre-specified non-random weights \citep{genovese2006false}.
\begin{defn}[Weighted Bonferroni test]
\label{def:bonferroni-test}
Let $P_1,\dots,P_n$ be the p-values and $\bm{\omega}=\left(\omega_1,\dots,\omega_n\right) \in\mathbb{R}_+^n$ be a non-random weight vector satisfying $\sum_{i = 1}^n \omega_i = n$, then the combined p-value of the weighted Bonferroni test is
$$P_{\mathrm{bon}}^{\bm{\omega}} =\min \left(1, n\min_{i=1,\dots,n}\frac{P_i}{\omega_i}\right).$$
\end{defn}

We can show that the combination tests are at least as powerful as Bonferroni in the following result.
\begin{restatable}{thm}{BonferroniConservative}
\label{thm:bonferroni-more-conservative}
Suppose that \Cref{asmp:p-alternative} holds.
The weighted heavy-tailed combination test is asymptotically at least as powerful as the weighted Bonferroni test:
\begin{equation*}
\lim_{\alpha\downarrow0}\frac{\Pr\left(P_\mathrm{comb}^{F, \bm \omega}\leq \alpha\right)}{\Pr\left(P_{\mathrm{bon}}^{\bm{\omega}}\leq \alpha\right)}\ge1.
\end{equation*}
Moreover, the inequality is strict if and only if the dominating-signal set $I_{\mathbf P} = \{i_1,\dots, i_S\}$ has at least two elements, and
the subvector $(1-P_{i_1}, \dots, 1- P_{i_S})$ is not asymptotically independent. 
\end{restatable}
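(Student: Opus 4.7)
The plan is to derive sharp leading-order asymptotics for both rejection probabilities as $\alpha\downarrow 0$ and reduce the comparison to a pointwise inequality against the spectral measure $H^*$.

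\textbf{Step 1: Bonferroni asymptotics.} Write $f_i(\alpha):=\Pr(P_i\le\omega_i\alpha/n)$. By Sklar's theorem applied to the MRV copula $C_P$ of $(1-P_1,\dots,1-P_n)$,
\[
\Pr(P_\mathrm{bon}^{\bm\omega}\le\alpha) = \Pr\!\bigl(\textstyle\bigcup_i\{P_i\le\omega_i\alpha/n\}\bigr)=D\bigl(f_1(\alpha),\dots,f_n(\alpha)\bigr),
\]
where $D$ is the tail dependence function of $C_P$. By \Cref{asmp:p-alternative}(iii), each $f_i$ with $i\in I_{\mathbf P}$ is regularly varying at $0$ with a common index $\beta$; fixing a reference $i_1\in I_{\mathbf P}$ and setting $c_i:=\lim_{\alpha\downarrow 0}f_i(\alpha)/f_{i_1}(\alpha)$ gives $c_i>0$ exactly when $i\in I_{\mathbf P}$. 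A uniform-in-direction strengthening of the MRV limit in \Cref{def:mrv-copula} then yields $\Pr(P_\mathrm{bon}^{\bm\omega}\le\alpha)\sim f_{i_1}(\alpha)\,\ell(c_1,\dots,c_n)$, together with $\sum_i f_i(\alpha)\sim f_{i_1}(\alpha)\sum_i c_i$.

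\textbf{Step 2: Combination-test asymptotics.} Let $t_\alpha:=Q_F(1-\alpha/n^{1-\gamma})$, so that the rejection event is $\{\bar X_{n,\bm\omega}>t_\alpha\}$. Since $\bar F\in\mathscr{R}_{-\gamma}$ and $\Pr(P_i\le\cdot)\in\mathscr{R}_\beta$ at $0$ for $i\in I_{\mathbf P}$, the marginal tail $\Pr(X_{i,\omega_i}>t)=\Pr(P_i<\omega_i\bar F(t))$ is regularly varying at $\infty$ with index $-\gamma\beta$ on $I_{\mathbf P}$ and of strictly lower order otherwise. By \Cref{prop:copula-to-mrv-distribution}, $\mathbf X\in\mathrm{MRV}_{-\gamma\beta}$ with the same spectral measure $H^*$ as $C_P$, and the scale-invariance of \Cref{prop:sum-tail-general-norm} in $(c_i)$ lets me reuse the same $c_i$. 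Using $\Pr(X_{i,\omega_i}>t_\alpha)=f_i(n^\gamma\alpha)\sim n^{\gamma\beta}f_i(\alpha)$ for $i\in I_{\mathbf P}$, \Cref{prop:sum-tail-general-norm} gives
\[
\Pr(P_\mathrm{comb}^{F,\bm\omega}\le\alpha)\sim\frac{1}{\sum_i c_i}\int_{\punitsphereone{n}}\Bigl(\sum_{i=1}^n(c_i\theta_i)^{1/(\gamma\beta)}\Bigr)^{\gamma\beta}H^*(\intd\bm\theta)\cdot\sum_{i=1}^n f_i(\alpha).
\]

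\textbf{Step 3: Pointwise comparison and strictness.} Using the integral representation \eqref{eq:define-l}, $\ell(c_1,\dots,c_n)=\int_{\punitsphereone{n}}\max_i(c_i\theta_i)\,H^*(\intd\bm\theta)$. Dividing the two asymptotics and applying the elementary inequality $\bigl(\sum_i a_i^{1/p}\bigr)^p\ge\max_i a_i$ pointwise in $\bm\theta$ with $a_i=c_i\theta_i$ and $p=\gamma\beta$ proves $\lim_{\alpha\downarrow 0}\Pr(P_\mathrm{comb}^{F,\bm\omega}\le\alpha)/\Pr(P_\mathrm{bon}^{\bm\omega}\le\alpha)\ge 1$. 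The pointwise bound is an equality exactly when at most one $c_i\theta_i$ is positive; since $c_i>0$ iff $i\in I_{\mathbf P}$, equality in the limit is equivalent to $H^*$ being supported on $\{\bm\theta:\theta_j>0\text{ for at most one }j\in I_{\mathbf P}\}$. This is automatic when $|I_{\mathbf P}|=1$; when $|I_{\mathbf P}|\ge 2$, it is equivalent, via the projection that defines the spectral measure of a subvector, to the $I_{\mathbf P}$-subvector $(1-P_{i_1},\dots,1-P_{i_S})$ being asymptotically independent. This yields the stated strict-inequality characterization.

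The main technical obstacle is the uniform-in-direction upgrade of the pointwise MRV limits — for both $D$ and the MRV distribution $\mathbf X$ — to multi-sequences whose coordinates vanish at different regularly-varying rates, including the coordinates $i\notin I_{\mathbf P}$ whose contributions are of strictly lower order but still enter the joint tail. This is standard extreme-value machinery, essentially a Potter-bounds argument together with the continuity of $\ell$, but it is where all nontrivial estimation resides.
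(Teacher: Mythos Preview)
Your proposal is correct and follows essentially the same route as the paper: both arguments reduce the ratio to
\[
\frac{\int_{\punitsphereone{n}}\bigl(\sum_i(c_i\theta_i)^{1/(\gamma\beta)}\bigr)^{\gamma\beta}H^*(\intd\bm\theta)}{\int_{\punitsphereone{n}}\max_i(c_i\theta_i)\,H^*(\intd\bm\theta)}
\]
and conclude by the pointwise bound $\bigl(\sum_i a_i^{1/p}\bigr)^p\ge\max_i a_i$, with the same strictness analysis. The only packaging differences are that (i) the paper expresses the Bonferroni event as $\{\max_i X_{i,\omega_i}>Q_F(1-\alpha/n)\}$ and reads off its asymptotics from the MRV structure of $\mathbf X$, whereas you go directly through the tail dependence function $D$ and $\ell$; and (ii) the paper explicitly shifts to $\tilde X_i=X_i-c\ge 0$ before invoking \Cref{prop:copula-to-mrv-distribution} and \Cref{prop:sum-tail-general-norm}, since those results are stated for nonnegative vectors --- a purely cosmetic step you omit but which your final Potter-bounds remark would absorb.
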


\Cref{thm:bonferroni-more-conservative} confirms that while both the heavy-tailed combination test (with $\gamma \leq 1$) and the Bonferroni test are asymptotically valid under the MRV copula framework, the combination test can be strictly more powerful when the lower tails of the p-values exhibit asymptotic dependence and the signal is not extremely sparse, meaning that the set of dominant signals includes more than one component. %
Intuitively, when there is only a single strong signal, there is no benefit from aggregation, and the combination test provides no advantage over the Bonferroni method.

We can further show that the Bonferroni test can be viewed as the limiting case of the combination test as $\gamma \to 0$:
\begin{restatable}{thm}{BonferroniGammaZero}
\label{thm:bonferroni-gamma-zero}
Suppose that \Cref{asmp:p-alternative} holds.
We have
\begin{equation}
\label{eq:asymp-bonferroni-gamma-zero}
\lim_{\gamma\downarrow0}\lim_{\alpha\downarrow0} \frac{\Pr\left(P_\mathrm{comb}^{F, \bm \omega}\leq \alpha\right)}{\Pr\left(P_{\mathrm{bon}}^{\bm{\omega}}\leq \alpha\right)}
=1.
\end{equation}
\end{restatable}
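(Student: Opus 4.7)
The plan is to prove that $\lim_{\gamma\downarrow 0}\tilde q(\gamma)=\tilde q_{\mathrm{bon}}$, where $\tilde q(\gamma)$ is the combination-test's limiting scaled power from \eqref{eq:limiting-power-ratio} and $\tilde q_{\mathrm{bon}}:=\lim_{\alpha\downarrow 0}\Pr(P_{\mathrm{bon}}^{\bm\omega}\leq\alpha)\big/\sum_{i=1}^n\Pr(P_i\leq\omega_i\alpha/n)$ is its Bonferroni analogue. Since the common denominator cancels in the ratio,
\[
\lim_{\gamma\downarrow 0}\lim_{\alpha\downarrow 0}\frac{\Pr(P_{\mathrm{comb}}^{F,\bm\omega}\leq\alpha)}{\Pr(P_{\mathrm{bon}}^{\bm\omega}\leq\alpha)}=\lim_{\gamma\downarrow 0}\frac{\tilde q(\gamma)}{\tilde q_{\mathrm{bon}}},
\]
so the identity reduces to showing this quotient equals $1$. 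The driving algebraic fact is the $\ell^p$-to-$\ell^\infty$ convergence $\bigl(\sum_i a_i^{1/\gamma}\bigr)^\gamma\to\max_i a_i$ as $\gamma\downarrow 0$ for any nonnegative $a_1,\dots,a_n$.

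First I would compute $\tilde q_{\mathrm{bon}}$ directly from the MRV copula. Writing $C$ for the copula of $(1-P_1,\dots,1-P_n)$ and $D$ for its tail dependence function, the union identity gives
\[
\Pr(P_{\mathrm{bon}}^{\bm\omega}\leq\alpha)=D\bigl(u_1(\alpha),\dots,u_n(\alpha)\bigr),\qquad u_i(\alpha):=\Pr(P_i\leq\omega_i\alpha/n).
\]
Under \Cref{asmp:p-alternative}(iii) the normalized ratios $u_i(\alpha)/u_{i_0}(\alpha)$ (for a fixed $i_0\in I_{\mathbf P}$) converge to finite limits $\tilde c_i$, with $\tilde c_i=0$ for $i\notin I_{\mathbf P}$. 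Substituting and invoking the defining MRV limit $s^{-1}D(sv_1,\dots,sv_n)\to\ell(v_1,\dots,v_n)$ from \Cref{def:mrv-copula} together with the integral representation \eqref{eq:define-l} yields
\[
\tilde q_{\mathrm{bon}}=\frac{\ell(\tilde c_1,\dots,\tilde c_n)}{\sum_i \tilde c_i}=\frac{1}{\sum_i\tilde c_i}\int_{\punitsphereone{n}}\max_{1\leq i\leq n}(\tilde c_i\theta_i)\,H^*(\mathrm{d}\bm\theta).
\]

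Second, reusing the asymptotic derivation in the proof of \Cref{thm:power-increasing}, which applies \Cref{prop:copula-to-mrv-distribution,prop:sum-tail-general-norm} to $Y_i=Q_F\bigl((1-P_i/\omega_i)^+\bigr)$ at the test threshold $Q_F(1-\alpha/n^{1-\gamma})$, the combination test admits a parallel closed form
\[
\tilde q(\gamma)=M(\gamma)\cdot\frac{1}{\sum_i\tilde c_i}\int_{\punitsphereone{n}}\Bigl(\sum_{i=1}^n(\tilde c_i\theta_i)^{1/\gamma}\Bigr)^{\gamma}H^*(\mathrm{d}\bm\theta),
\]
involving the \emph{same} spectral measure $H^*$ (because copulas are invariant under monotone marginal transformations) and the same limits $\tilde c_i$ (because $\Pr(Y_i>t)/\Pr(Y_{i_0}>t)$ and $u_i(\alpha)/u_{i_0}(\alpha)$ share a common limit by regular variation of $\Pr(P_i\leq\cdot)$ at $0$). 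The prefactor $M(\gamma)$ assembles the $n^{1-\gamma}$ from \Cref{prop:sum-tail-general-norm}, the mismatch between the threshold scaling $\alpha/n^{1-\gamma}$ and the denominator scaling $\alpha/n$, and the common regular-variation index of $\Pr(P_i\leq\cdot)$ at $0$ for $i\in I_{\mathbf P}$; a direct computation shows $M(\gamma)\to 1$ as $\gamma\downarrow 0$.

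Finally I would pass to the limit under the integral by dominated convergence: the pointwise limit $\bigl(\sum_i(\tilde c_i\theta_i)^{1/\gamma}\bigr)^\gamma\to\max_i(\tilde c_i\theta_i)$ holds for every $\bm\theta\in\punitsphereone{n}$, and for $\gamma\in(0,1]$ the monotonicity of $\ell^p$-norms supplies the uniform dominating bound $\bigl(\sum_i(\tilde c_i\theta_i)^{1/\gamma}\bigr)^\gamma\leq\sum_i\tilde c_i\theta_i\leq\sum_i\tilde c_i$, which is $H^*$-integrable because $H^*$ is a finite measure on $\punitsphereone{n}$. Combined with $M(\gamma)\to 1$, this yields $\lim_{\gamma\downarrow 0}\tilde q(\gamma)=\tilde q_{\mathrm{bon}}$. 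The main technical hurdle is the careful bookkeeping of $M(\gamma)$: the $n^{1-\gamma}$ gain from \Cref{prop:sum-tail-general-norm}, the loss from replacing the denominator's $\alpha/n$-scaling by the threshold's $\alpha/n^{1-\gamma}$-scaling, and the slowly varying component from the tail of $\Pr(P_i\leq\cdot)$ near $0$ must be tracked simultaneously and shown to balance in the limit $\gamma\downarrow 0$.
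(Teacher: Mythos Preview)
Your proposal is correct and follows essentially the same route as the paper: both reduce the inner $\alpha\downarrow 0$ limit to a ratio of spectral integrals over $H^*$ and then send $\gamma\downarrow 0$ using the pointwise convergence $\bigl(\sum_i a_i^{1/\gamma}\bigr)^\gamma\to\max_i a_i$. Two small remarks. First, the paper invokes monotone rather than dominated convergence, which is marginally cleaner here since the integrand is already known to be non-increasing in $\gamma$ by the power-mean inequality used in \Cref{thm:power-increasing}. Second, the effective exponent appearing in the spectral integral for $\tilde q(\gamma)$ is $\gamma\beta$, not $\gamma$, where $\beta\le 1$ is the regular-variation index of $\Pr(P_{i_0}\le\cdot)$ at $0$ from \Cref{asmp:p-alternative}(iii); this is precisely the ``common regular-variation index'' you fold into $M(\gamma)$, and since $\beta$ is a fixed constant independent of $\gamma$ it poses no obstacle---$\gamma\beta\downarrow 0$ as $\gamma\downarrow 0$. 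The paper sidesteps your separate $M(\gamma)$ bookkeeping by directly reusing the chain of equalities \eqref{eq:change-test-to-x-tilde}--\eqref{eq:inf_norm_expansion} from the proof of \Cref{thm:bonferroni-more-conservative}, which already balances the $n^{\gamma\beta}$ factor against the threshold mismatch and delivers the ratio in closed form.
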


Finally, we compare the asymptotic type-I error and power ratio between the proposed heavy-tailed combination test with $\gamma = 1$ and the Bonferroni test under different dependence structures of the p-values. We demonstrate that the relative advantage of the combination test over Bonferroni increases as the asymptotic lower-tail dependence among the p-values increases. 

\begin{restatable}{thm}{BonferroniDependence}
\label{thm:bonferroni-more-conservative-with-dependence}
Assume the assumptions as in \Cref{thm:combination-test-valid}. Let $C^*$ denote the limiting copula of $(1 - P_1, \dots, 1 - P_n)$. Then the asymptotic type-I error ratio between the weighted combination test with $\gamma = 1$  and the weighted Bonferroni test, 
\begin{equation}
\label{eq:error-increase-vs-dependence}
r(C^*)=\lim_{\alpha\downarrow0}\frac{\Pr_{H_0^\mathrm{global}}\left(P^{F,\bm{\omega}}_\mathrm{comb}\leq \alpha\right)}{\Pr_{H_0^\mathrm{global}}\left(P^{\bm{\omega}}_{\mathrm{bon}}\leq \alpha\right)},
\end{equation}
is non-decreasing in $C^*$ under the pointwise order. That is, if $C_1^*\preceq_{\mathrm{pw}}C_2^*$, then $r(C_1^*)\le r(C_2^*)$.
\end{restatable}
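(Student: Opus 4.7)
The plan is to derive a closed-form expression for $r(C^*)$ in terms of $C^*$ evaluated at a single point and then appeal to the monotonicity of $-\log$ under the pointwise order. Because the transformation parameter is fixed at $\gamma = 1$, \Cref{thm:combination-test-valid} gives $q(1) = 1$ for every MRV copula, so the numerator of \eqref{eq:error-increase-vs-dependence} satisfies $\Pr_{H_0^{\mathrm{global}}}(P^{F,\bm\omega}_{\mathrm{comb}} \le \alpha) = \alpha(1 + o(1))$ as $\alpha \downarrow 0$, independently of $C^*$. This is the key simplification that makes $r(C^*)$ depend on $C^*$ only through the Bonferroni denominator.

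Next I would compute the denominator directly. Since $(1 - P_1, \dots, 1 - P_n)$ has standard uniform marginals and copula $C$, continuity of the $P_i$'s gives
\begin{equation*}
\Pr_{H_0^{\mathrm{global}}}(P^{\bm\omega}_{\mathrm{bon}} \le \alpha) = 1 - \Pr(P_i > \omega_i \alpha/n,\ \forall i) = 1 - C(1 - \omega_1 \alpha/n, \dots, 1 - \omega_n \alpha/n) = D(\omega_1 \alpha/n, \dots, \omega_n \alpha/n),
\end{equation*}
where $D$ is the upper tail dependence function. Applying the defining limit of MRV copulas (\Cref{def:mrv-copula}) with $s = \alpha/n$ and $v_i = \omega_i$ yields
\begin{equation*}
\lim_{\alpha \downarrow 0} \frac{\Pr_{H_0^{\mathrm{global}}}(P^{\bm\omega}_{\mathrm{bon}} \le \alpha)}{\alpha} = \frac{\ell(\omega_1, \dots, \omega_n)}{n},
\end{equation*}
where $\ell$ is the stable tail dependence function of $C$. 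Combining the two limits gives the clean expression $r(C^*) = n / \ell(\omega_1, \dots, \omega_n)$.

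To expose the $C^*$-dependence, I would invoke the identity \eqref{eq:c-star-and-l}, namely $\ell(v_1, \dots, v_n) = -\log C^*(e^{-v_1}, \dots, e^{-v_n})$, which gives
\begin{equation*}
r(C^*) = \frac{n}{-\log C^*(e^{-\omega_1}, \dots, e^{-\omega_n})}.
\end{equation*}
Since $\omega_i > 0$, the arguments $e^{-\omega_i}$ lie in $(0,1)$, and $C^*$ takes values in $(0,1)$ at that point. If $C_1^* \preceq_{\mathrm{pw}} C_2^*$, then in particular $C_1^*(e^{-\omega_1}, \dots, e^{-\omega_n}) \le C_2^*(e^{-\omega_1}, \dots, e^{-\omega_n})$, and the strictly decreasing map $u \mapsto -\log u$ on $(0,1)$ yields $\ell_1(\bm\omega) \ge \ell_2(\bm\omega)$, whence $r(C_1^*) \le r(C_2^*)$, which is the desired monotonicity.

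There is no serious analytic obstacle here: once one recognizes that $q(1) = 1$ trivializes the numerator and that the Bonferroni denominator rewrites as $D(\bm\omega \alpha/n)$, the rest is a direct application of \Cref{def:mrv-copula} and the standard formula \eqref{eq:c-star-and-l}. The only point meriting care is verifying that $-\log$ is well-defined and strictly decreasing on the relevant range, which is immediate from $e^{-\omega_i} \in (0,1)$ and the copula bounds on $C^*$.
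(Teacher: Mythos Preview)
Your proposal is correct and follows essentially the same route as the paper's proof: both use \Cref{thm:combination-test-valid} to reduce the numerator to $\alpha(1+o(1))$, compute the Bonferroni denominator via the tail dependence function to obtain $r(C^*)=n/\ell(\omega_1,\dots,\omega_n)$, rewrite this using \eqref{eq:c-star-and-l}, and conclude by monotonicity of $-\log$ under the pointwise order.
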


This result also extends to the power comparison under alternatives where the tail heaviness of the transformed statistics matches that under the null, i.e., $\beta = 1$ in \Cref{asmp:p-alternative},  as is the case for the Type-A alternatives in \Cref{exmp:alternative_types} (\Cref{prop:alternative-a} for the proof).

\begin{restatable}{cor}{PowerGainDependence}
\label{thm:bonferroni-more-conservative-with-dependence-power}
Assume the assumptions as in \Cref{thm:bonferroni-more-conservative} and further $\beta=1$ in \Cref{asmp:p-alternative}. Let $C^*$ denote the limiting copula of $(1 - P_1, \dots, 1 - P_n)$. Then the asymptotic power ratio between the weighted combination test with $\gamma=1$ and the weighted Bonferroni test 
\begin{equation}
\label{eq:power-gain-vs-dependence}
\tilde{r}(C^*)=\lim_{\alpha\downarrow0}\frac{\Pr\left(P^{F,\bm{\omega}}_\mathrm{comb}\leq \alpha\right)}{\Pr\left(P^{\bm{\omega}}_{\mathrm{bon}}\leq \alpha\right)},
\end{equation}
is non-decreasing in $C^*$ under the pointwise order. That is, if $C_1^*\preceq_{\mathrm{pw}}C_2^*$, then $\tilde r(C_1^*)\le \tilde r(C_2^*)$.
\end{restatable}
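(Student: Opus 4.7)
My plan is to show that $\tilde r(C^*)$ reduces to a ratio depending on $C^*$ only through the stable tail dependence function $\ell$, and then to transport the pointwise order on $C^*$ into the reverse pointwise order on $\ell$ via the identity $\ell(v)=-\log C^*(e^{-v_1},\ldots,e^{-v_n})$ from \eqref{eq:c-star-and-l}. This is the alternative-side analogue of \Cref{thm:bonferroni-more-conservative-with-dependence}, and \Cref{asmp:p-alternative}(iii) with $\beta=1$ will be used only to pin down the marginal tail constants of the $P_i$'s under the alternative.

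The first concrete step is to normalize those marginal tails. \Cref{asmp:p-alternative}(iii) at $\beta=1$ gives that each dominating $P_i$, $i\in I_{\mathbf P}$, has a regularly varying lower tail of index one with a common slowly varying factor $L$: $\Pr(P_i<t)\sim c'_i\,tL(t)$ with $c'_i>0$, while $\Pr(P_i<t)=o(tL(t))$ otherwise (set $c'_i:=0$ there). Then, for any $F\in\mathscr R_{-1}^*$, $\Pr(X_{i,\omega_i}>t)=\Pr(P_i<\omega_i\overline{F}(t))\sim c'_i\omega_i\,\overline{F}(t)\,L(\overline{F}(t))$, so \Cref{prop:copula-to-mrv-distribution} places the transformed vector in $\mathrm{MRV}_{-1}$ with tail coefficients proportional to $c'_i\omega_i$. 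Applying \Cref{prop:sum-tail-general-norm} at $\gamma=1$, the key constant collapses:
\begin{equation*}
h(1,H^*)=\int_{\punitsphereone{n}}\frac{\sum_i c_i\theta_i}{\sum_i c_i}\,H^*(\intd\bm\theta)=1,
\end{equation*}
because $\int\theta_i\,H^*(\intd\bm\theta)=\ell(\mathbf e_i)=1$ for every coordinate. Combined with the exact identity $\overline{F}(Q_F(1-\alpha))=\alpha$, this yields the numerator asymptotic
\begin{equation*}
\Pr(P^{F,\bm\omega}_{\mathrm{comb}}\le\alpha)\sim\frac{\alpha L(\alpha)}{n}\sum_{i\in I_{\mathbf P}}c'_i\omega_i,
\end{equation*}
which, crucially, does not depend on $C^*$.

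For the Bonferroni denominator, I would start from the identity $\Pr(P^{\bm\omega}_{\mathrm{bon}}\le\alpha)=D\bigl(F_{P_1}(\omega_1\alpha/n),\ldots,F_{P_n}(\omega_n\alpha/n)\bigr)$, then apply the MRV-copula scaling $s^{-1}D(sv_1,\ldots,sv_n)\to\ell(v_1,\ldots,v_n)$ at $s=\alpha L(\alpha)/n$ and $v_i=c'_i\omega_i$, with a standard Karamata cancellation $L(\alpha/n)/L(\alpha)\to 1$, to obtain
\begin{equation*}
\Pr(P^{\bm\omega}_{\mathrm{bon}}\le\alpha)\sim\frac{\alpha L(\alpha)}{n}\,\ell(c'_1\omega_1,\ldots,c'_n\omega_n).
\end{equation*}
Dividing, $\tilde r(C^*)=\bigl(\sum_i c'_i\omega_i\bigr)/\ell(c'_1\omega_1,\ldots,c'_n\omega_n)$, and the relation \eqref{eq:c-star-and-l} immediately implies that $\ell$ is pointwise non-increasing in $C^*$, so $\tilde r$ is pointwise non-decreasing in $C^*$, which is the claim.

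The main technical hurdle is a careful handling of the non-dominating signals and of the common slowly varying factor. For $i\notin I_{\mathbf P}$ the $i$-th coordinate argument of $D$ is $o(\alpha/n)$ rather than exactly $0$, so taking the scaling limit at the boundary of $\ell$'s domain requires a continuity/sandwich argument (upper-bounding via $\ell(v)\le\sum_i v_i$ and lower-bounding via coordinate monotonicity of $\ell$, which forces the contributions from vanishing coordinates to drop out in the limit). Identifying a single slowly varying $L$ shared across all dominating coordinates also requires Karamata-type bookkeeping enabled by \Cref{asmp:p-alternative}(iii). Both steps essentially reproduce the bookkeeping already carried out in the null-case proof of \Cref{thm:bonferroni-more-conservative-with-dependence}; the corollary then follows by substituting the null marginals $F_{P_i}(t)=t$ there with the alternative tails $F_{P_i}(t)\sim c'_i tL(t)$.
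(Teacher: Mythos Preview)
Your proposal is correct and arrives at the same closed form $\tilde r(C^*)=\bigl(\sum_i c_i\bigr)/\ell(c_1,\ldots,c_n)=-\bigl(\sum_i c_i\bigr)/\log C^*(e^{-c_1},\ldots,e^{-c_n})$ as the paper, from which the monotonicity in $C^*$ follows via \eqref{eq:c-star-and-l}. The only organizational difference is that you compute the Bonferroni denominator directly from the tail dependence function $D$ (as in the paper's proof of \Cref{thm:bonferroni-more-conservative-with-dependence}), whereas the paper's proof of the corollary recycles the spectral-measure integrals already derived in the proof of \Cref{thm:bonferroni-more-conservative}; both routes yield $\ell(c_1,\ldots,c_n)$ and are equivalent.
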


\Cref{exmp:t-dependence,exmp:clayton-dependence} illustrate how the pointwise order among $t$ and Clayton copulas is governed by their respective parameters, such as correlation or generator index. Since the pointwise order of copulas can be inherited by their limiting copulas, in view of \Cref{thm:bonferroni-more-conservative-with-dependence} and \Cref{thm:bonferroni-more-conservative-with-dependence-power}, this implies that the Bonferroni test becomes increasingly conservative relative to the heavy-tailed combination test with $\gamma=1$. 
While our theoretical results focus on $\gamma = 1$, which we recommend in practice, simulations in \Cref{sec:simulations} will demonstrate that the increasing trend in the power ratio between the combination test and the Bonferroni test with respect to tail dependence persists more broadly for other choices of $\gamma > 0$.

\begin{rem}[One signal]
\label{rem:sparse-signal}
Suppose that we observe that one of the p-values, say $P_1=:p$, is close to $0$, and all other p-values $P_2,\dots,P_n$ are of moderate size in $(0,1)$. This may happen in the case where only one hypothesis has a significant signal and all other null hypotheses are true, that is, an extreme case of sparse signal.
In this case, if $p$ is very small, then  the  p-value  $P^{F,\bm{\omega}}_\mathrm{comb}$ with any $\gamma>0$ and $\bm{\omega}=(1,\dots,1)$ is similar, because, as $p\downarrow 0$ and keeping the other p-values constant,
$$ 
 \bar X_{n,\bm{\omega}}=
\frac{1}{n}\sum_{i=1}^nQ_F (1 - P_i  ) 
\simeq \frac 1nQ_F (1 -p ) ,
$$
and hence
$$
P_{\mathrm{comb}}^{F,\bm\omega}=  n^{1-\gamma} \overline F
(\bar X_{n,\bm{\omega}})=
 n^{1-\gamma} \overline F\left(
\frac{1}{n} Q_F\left(1  -p\right)\right)
\simeq  n  \overline F (
 Q_F (1  -p ) )
=np.
$$
Therefore, in this special case, we expect all heavy-tailed combination tests to perform similarly to the Bonferroni test. 
\end{rem}

\section{Simulations}
\label{sec:simulations}

In this section, we use simulations to assess the validity and power of the combination tests at fixed significance levels and compare their power with that of the Bonferroni test. We consider three significance levels $\alpha$: $5\times 10^{-2}$, $5\times 10^{-3}$ and $5\times10^{-4}$. 
The tests are expected to more closely reflect their theoretical properties in asymptotic validity and power at smaller significance levels, which are particularly relevant in many applications where multiple testing adjustments lead to stringent thresholds for individual p-values. We simulate two classes of MRV copulas: the survival Clayton copula and the multivariate 
t copula, both with varying levels of asymptotic dependence. The total number of base hypotheses $n$ is set to either $5$ or $100$.

\subsection{Empirical Validity}
\label{subsec:empirical-validity}
We first investigate how the type-I error of the heavy-tailed combination test varies with both the transformation parameter $\gamma$ and with the dependence structure among the p-values near $0$. Specifically, we generate $(1-P_1,\dots, 1-P_n)$ under the global null from two families of copulas: the survival Clayton copula and the multivariate $t$ copula. 
For the survival Clayton copula, we vary the generator index $\theta$ to induce different levels of dependence, such that the pairwise Kendall’s $\tau = \theta / (2 + \theta)$ ranges from $0.05$ to $0.95$ in increments of $0.05$, following the relationship given in \citet[Table 7.5]{mcneil2015quantitative}. For the multivariate $t$ copula, we fix the degrees of freedom at $\nu = 5$ and specify the scale matrix $\Sigma$ with unit diagonal entries and a common off-diagonal correlation $\rho$. To vary the pairwise Kendall’s $\tau$ from $0.05$ to $0.95$, we use the transformation $\rho = \sin(\pi \tau / 2)$, as given in \citet[Section 3.1]{demarta2005t}.
In all settings, we fix the combination test weights at $\omega_i = 1$. 
We consider two classes of transformation function $F$, both with varying $\gamma$: 
the truncated $t$ distribution with $\gamma =\nu$ degrees of freedom (defined analogously to the truncated Cauchy in \Cref{exmp:truncated_cauchy}, truncated below the $0.1$ percentile) and the Pareto distribution with shape parameter $\gamma$. 
We compare four values of $\gamma$: $0.3$, $0.6$, $1.0$, and $1.2$, where $\gamma = 1$ corresponds to the truncated Cauchy combination test and the harmonic mean p-value introduced in \Cref{exmp:truncated_cauchy} and \Cref{exmp:harmonic_mean}. For benchmarking, we also include the Bonferroni test and the Half-Cauchy combination test (HCCT) proposed by \citet{liu2025heavily}. Each configuration is replicated $10^7$ times to estimate the empirical scaled Type-I error, defined as the empirical type-I error divided by the nominal level  $\alpha$.

\begin{figure}[ht!]
\centering
\includegraphics[width=0.85\linewidth]{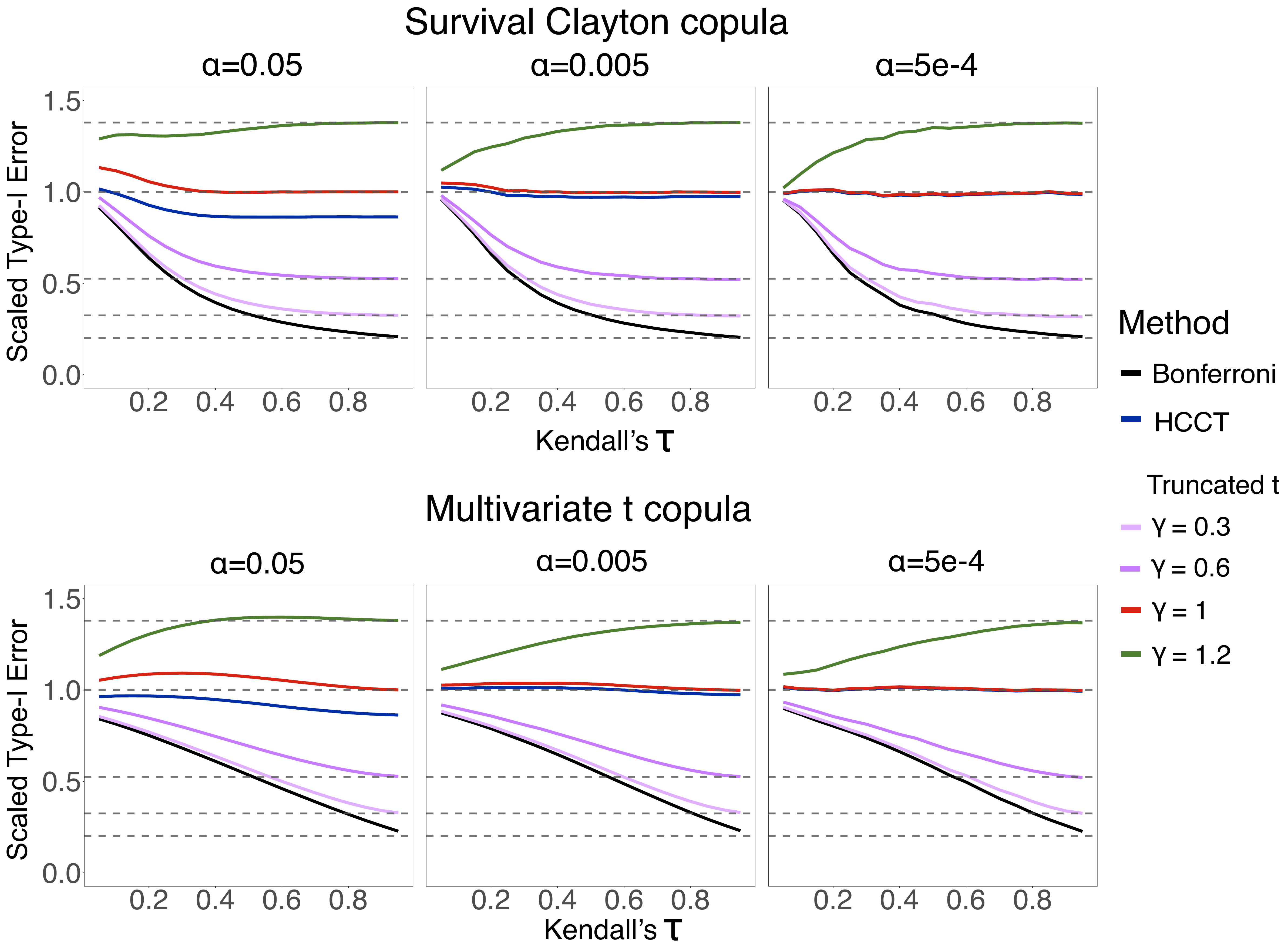}
\caption{Empirical scaled Type-I error of the combination test using truncated t
distributions and $n = 5$. The scaled Type-I error is defined as the empirical type-I error divided by the nominal level $\alpha$. The $5$ dashed horizontal lines, from bottom to top, indicate the bound $n^{\gamma -1}$ for $\gamma = 0, 0.3, 0.6, 1$ and $1.2$. }
\label{fig:validity-tt-5}
\end{figure}

\Cref{fig:validity-tt-5} compares the empirical type-I errors of the combination tests using truncated $t$ distributions with $n = 5$. Similar results for Pareto distributions and $n = 100$ are shown in \Cref{fig:validity-pareto-5,fig:validity-tt-100,fig:validity-pareto-100}. The scaled type-I error increases with the transformation parameter $\gamma$. Empirical type-I errors are well controlled for $\gamma \leq 1$, especially at smaller $\alpha$. When $\gamma = 1$, the test has a stable type-I error rate, whereas for $\gamma < 1$, the tests become more conservative as dependence among p-values increases. All tests approach the theoretical bound $n^{\gamma - 1}$ as the dependence of p-values approaches complete dependence. Bonferroni’s method behaves like the combination test with $\gamma = 0$, and is the most conservative, especially under high dependence of p-values. HCCT performs similarly to the truncated $t$ test with $\gamma = 1$, while offering better control at $\alpha = 0.05$ by correcting for inflation of the test under independence.

\subsection{Empirical Power}
\label{subsec:empirical-power}
Next, we investigate the empirical power of the heavy-tailed combination tests by varying the type of alternatives, signal density, signal strength, and dependence structure among p-values.

To generate p-values under the alternative hypotheses, we first simulate $(\tilde P_1,\dots, \tilde P_n)$ under the global null as described in \Cref{subsec:empirical-validity}. For the type-A alternative, we transform each $\tilde P_i$ to a t-statistics $T_i=t_5^{-1}(1-\tilde P_i)$, where $t_5^{-1}$ is the quantile function of the Student's $t$ distribution with $5$ degrees of freedom. We then define each p-value $P_i =1-t_5(T_i+\mu_i)$, where $\bm{\mu}=(\mu_1,\dots,\mu_n)$ is the pre-determined signal vector with $\mu_i\ge0$. For sparse signals, we use $\bm{\mu}=(\mu,\mu,\mathbf{0})$, and for dense signals, $\bm{\mu}=(\mu,\dots,\mu)$. For the Type-B alternative, we directly generate $P_i=\tilde P_i^{\beta_i}$, with $\bm{\beta}=(\beta_1,\dots,\beta_n)$ where each $\beta_i\ge1$. Sparse and dense signal configurations are created using $(\beta_s, \beta_s, \beta_w,\dots, \beta_w)$ and $(\beta_s, \dots, \beta_s)$ respectively, where $\beta_s > \beta_w \geq 1$. Each simulation setting is replicated $10^6$ times to estimate empirical power.

\begin{figure}[ht!]
\centering
\includegraphics[width=\textwidth]{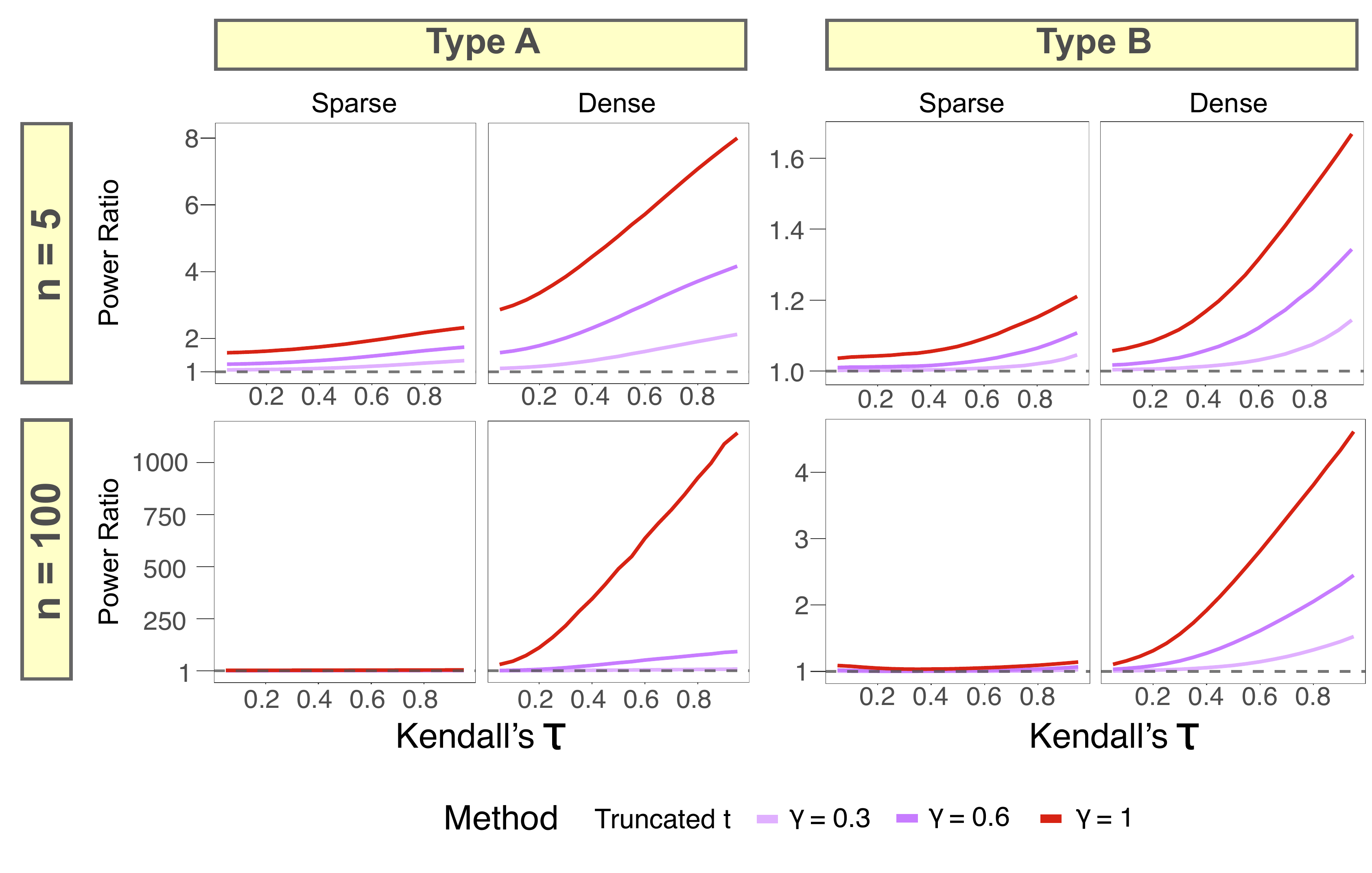}
\caption{Power ratio of the combination test to the Bonferroni test versus Kendall's $\tau$, under different alternative types at significance level $\alpha = 5 \times 10^{-3}$. The underlying dependence among base p-values is modeled using the Clayton copula. }
\label{fig:power-dep-clayton-5e-3}
\end{figure}

We first study how power depends on the strength of dependence among p-values by varying Kendall's $\tau$ from $0.05$ to $0.95$, while keeping marginal signal strengths fixed. 
For the Type-A alternative, we set the signal level such that the combination test with the truncated $t$ distribution (truncated below $0.1$ percentile) can achieve the power of $0.5$ when Kendall's $\tau$ is $0.5$. For the Type-B alternative, we set the signal level so that the same test achieves the power of $0.2$ when Kendall's $\tau$ is $0.5$. This avoids scenarios where all tests have too small power or all have power near $1$. Moreover, we fix $\beta_w = 1.5$ across all settings for Type-B alternatives.

\Cref{fig:power-dep-clayton-5e-3} displays the empirical power ratio of the combination test to the Bonferroni test at significance level $\alpha = 0.005$, with p-values generated from a survival Clayton copula. As dependence increases, the power advantage of the combination test becomes larger, especially in dense signal scenarios, for both Type-A and Type-B alternatives. The power gain is more pronounced for Type-A alternatives. For sparse signals, combination tests still exhibit a power gain over Bonferroni, consistent with our theoretical results, although the improvement is modest due to the presence of only two signals. Furthermore, as the transformation index $\gamma$ increases, the power of the combination test improves, which also aligns with our asymptotic theory.  Similar patterns hold for other significance levels and copulas, and when using heavy-tailed combination tests with Pareto distributions, as shown in Figures~\ref{fig:power-dep-clayton-5e-2}-\ref{fig:pareto-power-dep-t-5e-4}.

\begin{figure}[ht!]
\centering
\includegraphics[width=\textwidth]{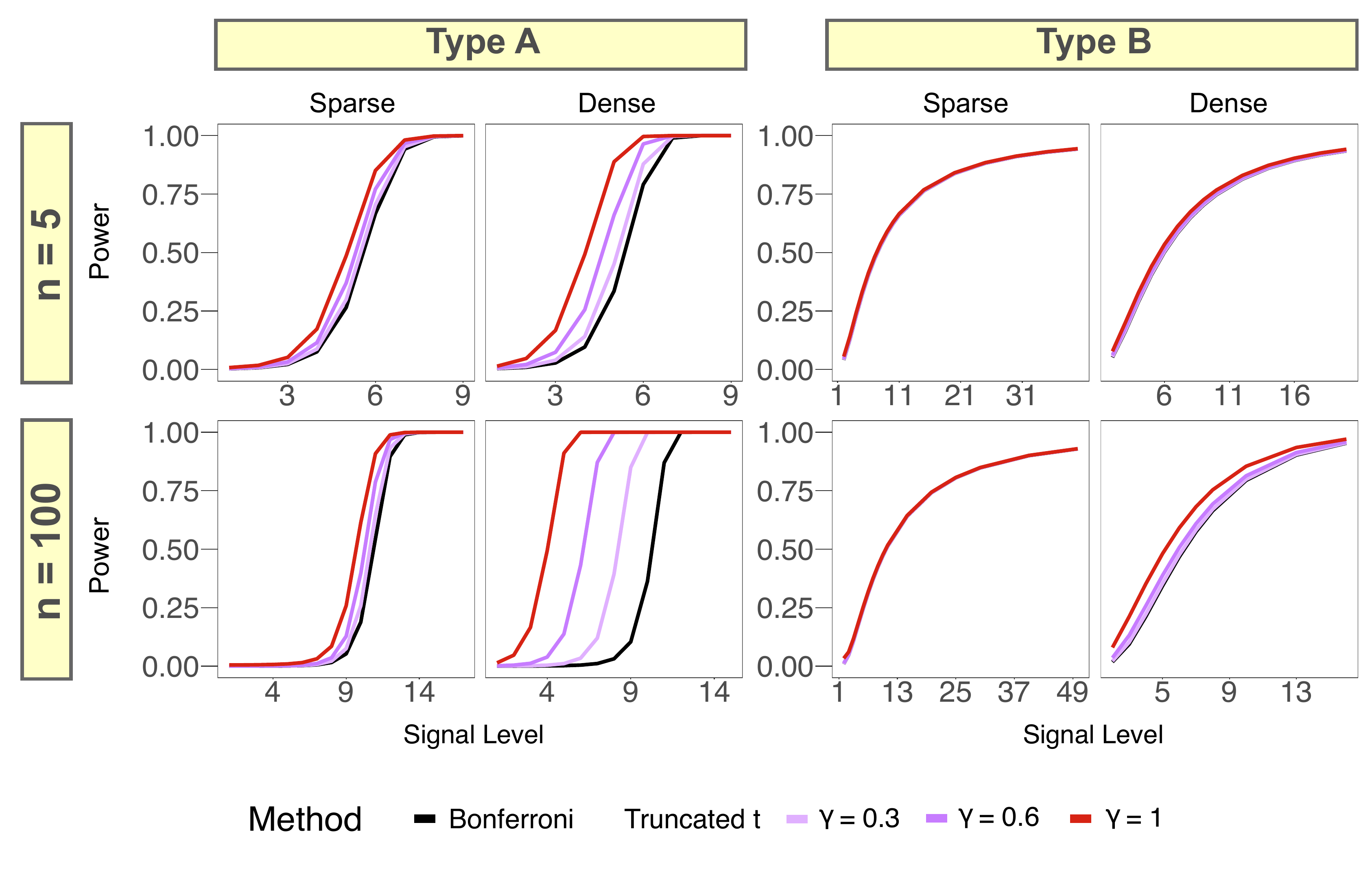}
\caption{The power of the combination test and the Bonferroni test versus signal levels under different alternative types at significance level $\alpha=5\times10^{-3}$. The underlying
dependence among base p-values is modeled using the Clayton copula.}
\label{fig:power-signal-clayton-5e-3}
\end{figure}

We also examine the impact of signal strength on test power. For Type-A alternatives, we vary $\mu$; for Type-B, we vary $\beta_s$, increasing signal strength until at least one of the tests reaches power near one, or until power differences among tests become negligible. In these scenarios, Kendall's $\tau$ is fixed at $0.5$. \Cref{fig:power-signal-clayton-5e-3} shows the results at $\alpha = 0.005$ with base p-values generated from a survival Clayton copula. The figure illustrates how the combination test consistently outperforms Bonferroni as signals strengthen, especially for dense signals. For example, for the Type-A dense signal when $n = 100$, the combination tests with $\gamma = 1$ achieve power close to 1, whereas the Bonferroni test remains near zero power. Additional results for other significance levels and dependence structures are provided in Figures~\ref{fig:power-signal-clayton-5e-2}-\ref{fig:pareto-power-signal-t-5e-4}.

\subsection{Comparison with Light-Tailed Combination Tests}
\label{subsec:comp-with-light-tail-test}
In addition, we compare heavy-tailed combination tests with light-tailed alternatives based on Fisher's combination statistic $\sum_{i=1}^n \log P_i$, where $-\log P_i$ is a light-tailed (exponential) transformation of the p-value. We consider two methods that use this statistic but calibrate it differently to handle dependence.

The first is the adjusted Fisher test of \cite{kost2002combining}, which targets parametric dependence (typically Gaussian or $t$). It computes the mean and variance of Fisher's statistic  $-2\sum_i \log P_i$
under the assumed joint distribution of the test statistics, with the covariance matrix specified or estimated, and approximates its null distribution by a scaled chi-square with matching first two moments. This is a moment-matching approximation without formal type-I error guarantees. The second is the corrected geometric-mean p-value of \cite{vovk2020combining}, which targets arbitrary dependence. It defines the combined p-value as
$$P_{\mathrm{geo}} = \min\left\{1, \; e \cdot \exp\left(\tfrac{1}{n}\sum_{i=1}^n \log P_i\right)\right\}$$
which is a monotone transformation of Fisher's statistic calibrated to be provably valid under any joint distribution of the base 
p-values. As baselines, we also include Bonferroni's test and HCCT \citep{liu2025heavily} as in previous subsections.
Because the adjusted Fisher test requires moments of $\log P_i$
pairs under the joint distribution of the test statistics, which is tractable for Gaussian and $t$ but not for other MRV copulas such as the Clayton, we restrict this comparison to the multivariate $t$ copula. 
We use the settings of \Cref{subsec:empirical-validity} and \Cref{subsec:empirical-power} restricted to the multivariate $t$ copula, evaluating empirical validity at $\alpha = 0.05$ and 
$5 \times 10^{-3}$ and empirical power under the Type-A alternative. The adjusted Fisher test is implemented with the true correlation matrix, giving it the most favorable setting.

\cref{fig:comp-light-tailed-test} shows that the adjusted Fisher and geometric-mean tests occupy opposite ends of a validity-power trade-off, and neither is competitive across the full range of settings. The adjusted Fisher test fails to control type-I error at $\alpha = 5 \times 10^{-3}$, reflecting the limits of moment-matching approximations in the tail. Since meaningful power comparisons require validity, we restrict the power comparison to $\alpha = 0.05$. At this level, the adjusted Fisher test aggregates dense signals effectively but performs poorly under sparse signals, sometimes even worse than Bonferroni. The geometric-mean test maintains validity across all dependence levels but is overly conservative, with consistently low power. In contrast, heavy-tailed combination tests such as the truncated $t_1$ test and HCCT control type-I error across the full range of $\alpha$
and retain substantial power against both sparse and dense alternatives,  without requiring the dependence structure to be specified or estimated.

\begin{figure}[ht!]
\centering
\includegraphics[width=\textwidth]{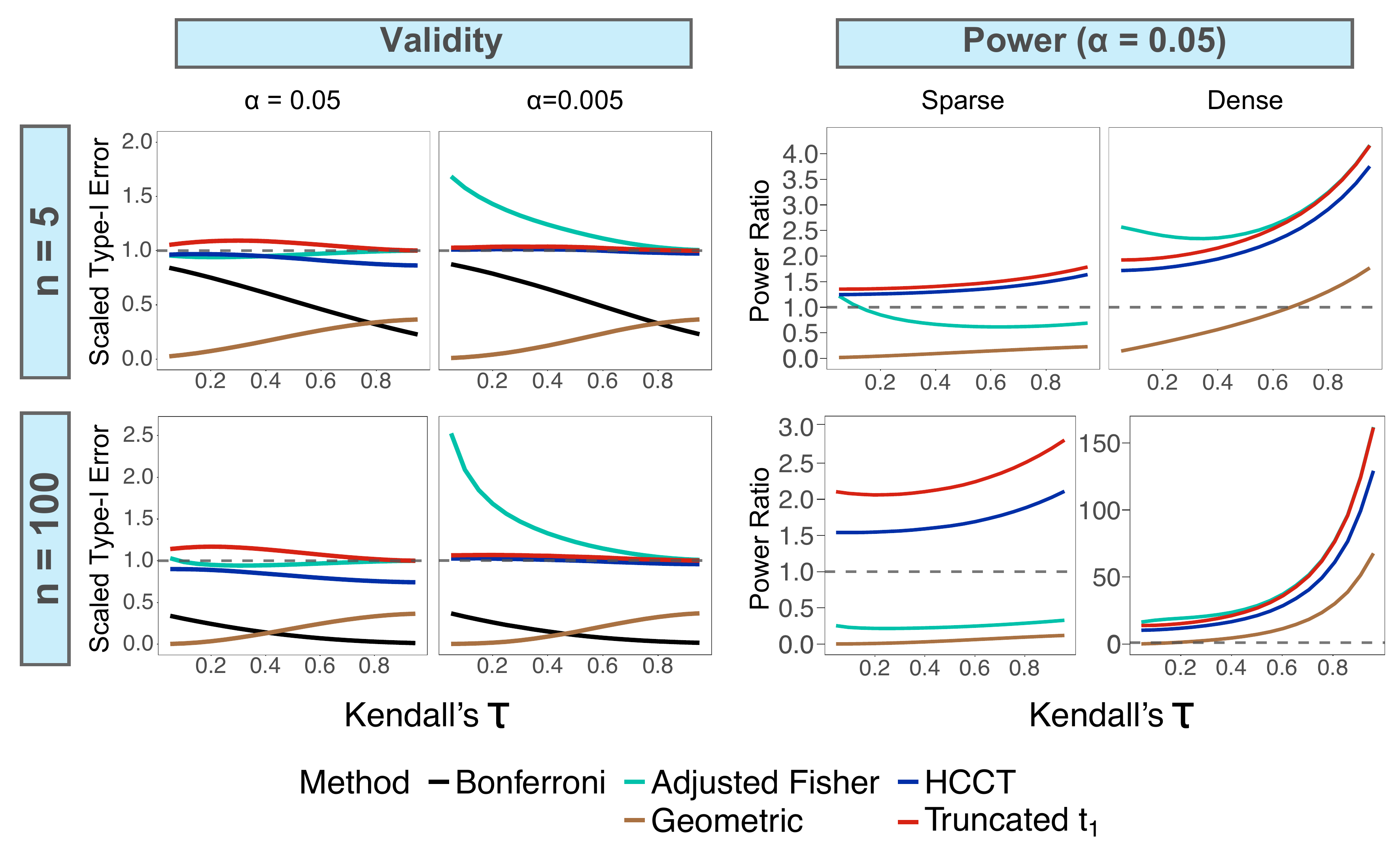}
\caption{Validity and power comparison between heavy-tailed and light-tailed combination tests, with Bonferroni's test as a baseline, as functions of Kendall's $\tau$. The dependence among base p-values is modeled by the multivariate $t$ copula. }
\label{fig:comp-light-tailed-test}
\end{figure}

\section{Real Data Analysis}
\label{sec:real-data}
Spatial transcriptomics \citep{rao2021exploring} is an emerging technology that enables high-resolution measurement of gene expression while preserving spatial information within tissues. One fundamental step in the spatial transcriptomics analysis is identifying spatially variable genes (SVGs), which are genes whose expression levels vary with spatial locations. Detecting SVGs is critical for uncovering biologically informative signals \citep{asp2019spatiotemporal} and is essential for integrating spatial transcriptomics with single-cell RNA-seq data \citep{satija2015spatial}.

Due to the high noise and complex spatial structures in such data, numerous computational methods have been developed to detect SVGs. However, benchmarking studies suggest no single method consistently outperforms others across datasets \citep{chen2024evaluating, chen2025benchmarking}. To improve detection power, we use heavy-tailed combination tests to aggregate p-values from multiple SVG detection methods.

Specifically, we use the five methods benchmarked in \citet{chen2025benchmarking} that both produce p-values and are shown to control type-I error:
SpatialDE \citep{svensson2018spatialde}, SPARK \citep{sun2020statistical}, SOMDE \citep{hao2021somde}, SPARK-X \citep{zhu2021spark}, and directly computation of RV-coefficient \citep{escoufier1973traitement}. P-values for each gene from all five methods were obtained directly from the authors of \citet{chen2025benchmarking}. We restrict our analysis to the 67 datasets in which all five methods were benchmarked (see \Cref{tab:selected_datasets}), each containing on average about 14,243 genes.  We compare combination strategies via the Bonferroni test and the heavy-tailed combination test with truncated $t$ distribution (truncated below $0.1$ percentile) and $\gamma = 1$.
To identify SVGs, we apply the Benjamini-Hochberg (BH) procedure \citep{benjamini1995controlling} to the gene-level p-values for each method and combination strategy, with the nominal false discovery rate set at $\alpha = 0.05$.

\Cref{fig:real_data}a displays the proportion of discoveries across 67 spatial transcriptomics datasets using five individual SVG detection methods and their combined results. Across datasets, combining p-values consistently improves power,  and the heavy-tailed combination test slightly outperforms the Bonferroni method. However, the observed power gains over Bonferroni are small, despite substantial dependence among methods applied to the same data (reported in \cref{fig:p-vals-corr}). A closer look reveals that for most genes, only a single method typically yields a notably smaller p-value than others, resulting in a sparse signal configuration, for which all heavy-tailed combination p-values are similar to the Bonferroni p-value (see Remark \ref{rem:sparse-signal}).

To better isolate settings where combination methods should excel, we restrict attention to genes where the second-smallest p-value is no more than 10 times the smallest, indicating at least two methods provide comparable evidence. Within this subset, heavy-tailed combination tests demonstrate clear advantages over Bonferroni, aligning with our simulation findings (\Cref{fig:real_data}b). These results reveal that while p-value combination is generally beneficial, its efficiency gain depends on concordance among methods: when multiple methods have similar strength of p-values, heavy-tailed combination tests offer stronger improvements in detection power.

\begin{figure}[t]
    \centering
    \includegraphics[width=\linewidth]{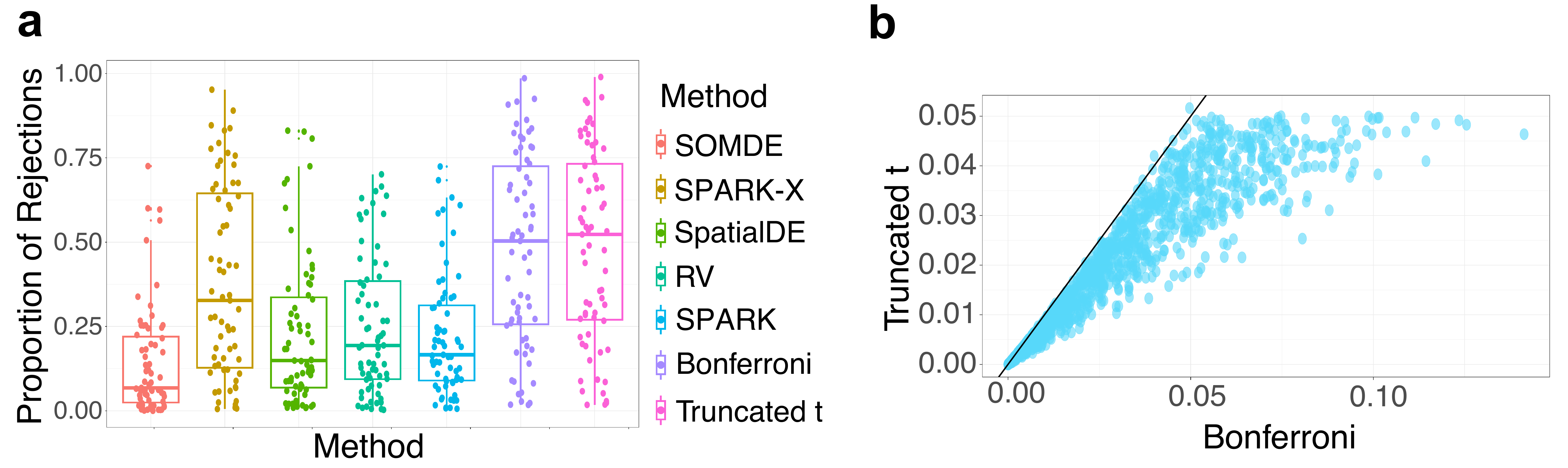}
\caption{Combined p-values improve detection of SVGs. \textbf{a}) Proportion of rejections using original p-values of each method versus combined p-values with FDR controlled at $\alpha = 0.05$. Each dot represents one dataset. 
\textbf{b}) Comparison of combined p-values obtained using the heavy-tailed combination test with truncated $t$ distribution against the Bonferroni test, for genes with the second-smallest p-value no more than 10 times the smallest. Each dot represents a gene, and only genes with at least one resulting p-value below $0.05$ are shown.}
\label{fig:real_data}
\end{figure}

\section{Discussion}
\label{sec:discussion}
Several aspects of our results merit further discussion. 
Our theoretical framework focuses on asymptotic validity and power as the significance level $\alpha \to 0$.  A complete understanding of non-asymptotic behavior at fixed $\alpha$ remains an important direction for future work, including characterizing rates of convergence and providing sharper finite-sample guarantees. That said, type-I error inflation appears to remain modest at conventional significance levels in practice:  our simulations in Figure~\ref{fig:validity-tt-5} show only modest inflation even at $\alpha = 0.05$, and explicit non-asymptotic bounds are available in special cases, such as the Clayton copula with Pareto transformation studied by \citet{chen2025subuniformity}, where inflation remains small across a wide range of dependence parameters (Figure~\ref{fig:nonasymptotic_typeI}). Methods such as the HCCT \citep{liu2025heavily}, which adjust for type-I error inflation under independence, can further improve control when $\alpha$ is not very small and dependence is weak, although such adjustments become less critical for smaller $\alpha$ or under moderate to strong dependence.

Our theoretical results are derived under a fixed number of hypotheses $n$, but empirical findings suggest that the methods remain effective for moderate $n$ (e.g., a few hundred). We conjecture that the theory can extend to scenarios where $n$ increases slowly as $\alpha$ decreases, similar to results shown in \citet{liu2020cauchy} for special cases.

Finally, \Cref{cor:CCT-validity} establishes asymptotic validity of the original Cauchy combination test within our framework. However, our other results, including the monotonicity in $\gamma$ (\Cref{thm:combination-test-valid,thm:power-increasing}), and the power and validity comparison with Bonferroni (\Cref{thm:bonferroni-more-conservative,thm:bonferroni-more-conservative-with-dependence} and \Cref{thm:bonferroni-more-conservative-with-dependence-power}), rely on bounded lower support and do not extend to the original case. Prior studies \citep{fang2023heavy,gui2023aggregating} have shown that transformations with unbounded support may lead to inefficiencies when 
p-values are conservative or negatively correlated, and the recent parallel work of \cite{chakraborty2025universal} further establishes that the original Cauchy combination test can be asymptotically conservative under many tail dependence structures. Together, these findings provide consistent theoretical and practical support for using transformations with bounded lower support, such as the truncated Cauchy or Pareto, in applications.

\section*{Acknowledgments}
Jingshu Wang is partly supported by the National Science Foundation (DMS-2238656) and the National Institute of General Medical Sciences (R35GM162500).  
Tiantian Mao is partly supported by the National Natural Science Foundation of China (12371476). Ruodu Wang is partly supported by the Natural Sciences and Engineering Research Council of Canada (CRC-2022-00141, RGPIN-2024-03728). 
The authors would like to thank the University of Chicago’s Research Computing Center for its support of this work.

\bibliographystyle{plainnat}

\bibliography{ref}

\begin{thebibliography}{43}
\providecommand{\natexlab}[1]{#1}
\providecommand{\url}[1]{\texttt{#1}}
\expandafter\ifx\csname urlstyle\endcsname\relax
  \providecommand{\doi}[1]{doi: #1}\else
  \providecommand{\doi}{doi: \begingroup \urlstyle{rm}\Url}\fi

\bibitem[Ansari and Rüschendorf(2021)]{ANSARI2021104709}
Jonathan Ansari and Ludger Rüschendorf.
\newblock Ordering results for elliptical distributions with applications to
  risk bounds.
\newblock \emph{Journal of Multivariate Analysis}, 182:\penalty0 104709, 2021.

\bibitem[Asp et~al.(2019)Asp, Giacomello, Larsson, Wu, F{\"u}rth, Qian,
  W{\"a}rdell, Custodio, Reimeg{\aa}rd, Salm{\'e}n,
  et~al.]{asp2019spatiotemporal}
Michaela Asp, Stefania Giacomello, Ludvig Larsson, Chenglin Wu, Daniel
  F{\"u}rth, Xiaoyan Qian, Eva W{\"a}rdell, Joaquin Custodio, Johan
  Reimeg{\aa}rd, Fredrik Salm{\'e}n, et~al.
\newblock A spatiotemporal organ-wide gene expression and cell atlas of the
  developing human heart.
\newblock \emph{Cell}, 179\penalty0 (7):\penalty0 1647--1660, 2019.

\bibitem[Beirlant et~al.(2004)Beirlant, Goegebeur, Segers, and
  Teugels]{beirlant2006statistics}
Jan Beirlant, Yuri Goegebeur, Johan Segers, and Jozef~L Teugels.
\newblock \emph{Statistics of extremes: Theory and applications}.
\newblock John Wiley \& Sons, 2004.

\bibitem[Benjamini and Hochberg(1995)]{benjamini1995controlling}
Yoav Benjamini and Yosef Hochberg.
\newblock Controlling the false discovery rate: a practical and powerful
  approach to multiple testing.
\newblock \emph{Journal of the Royal Statistical Society: Series B
  (Methodological)}, 57\penalty0 (1):\penalty0 289--300, 1995.

\bibitem[Cai et~al.(2022)Cai, Lei, and Roeder]{cai2022model}
Zhanrui Cai, Jing Lei, and Kathryn Roeder.
\newblock Model-free prediction test with application to genomics data.
\newblock \emph{Proceedings of the National Academy of Sciences}, 119\penalty0
  (34):\penalty0 e2205518119, 2022.

\bibitem[Chakraborty et~al.(2025)Chakraborty, Guo, Shedden, and
  Stoev]{chakraborty2025universal}
Parijat Chakraborty, F~Richard Guo, Kerby Shedden, and Stilian Stoev.
\newblock On the universal calibration of pareto-type linear combination tests.
\newblock \emph{arXiv preprint arXiv:2509.12066}, 2025.

\bibitem[Chen et~al.(2024)Chen, Kim, and Yang]{chen2024evaluating}
Carissa Chen, Hani~Jieun Kim, and Pengyi Yang.
\newblock Evaluating spatially variable gene detection methods for spatial
  transcriptomics data.
\newblock \emph{Genome Biology}, 25\penalty0 (1):\penalty0 18, 2024.

\bibitem[Chen et~al.(2025)Chen, Ran, Tang, Chen, Huang, Shi, and
  Xi]{chen2025benchmarking}
Xuanwei Chen, Qinghua Ran, Junjie Tang, Zihao Chen, Siyuan Huang, Xingjie Shi,
  and Ruibin Xi.
\newblock Benchmarking algorithms for spatially variable gene identification in
  spatial transcriptomics.
\newblock \emph{Bioinformatics}, 41\penalty0 (4):\penalty0 btaf131, 2025.

\bibitem[Chen et~al.(2023)Chen, Liu, Tan, and Wang]{chen2023trade}
Yuyu Chen, Peng Liu, Ken~Seng Tan, and Ruodu Wang.
\newblock Trade-off between validity and efficiency of merging p-values under
  arbitrary dependence.
\newblock \emph{Statistica Sinica}, 33\penalty0 (2):\penalty0 851--872, 2023.

\bibitem[Chen et~al.(2026)Chen, Wang, Wang, and Zhu]{chen2025subuniformity}
Yuyu Chen, Ruodu Wang, Yuming Wang, and Wenhao Zhu.
\newblock Subuniformity of harmonic mean p-values.
\newblock \emph{Canadian Journal of Statistics}, 54\penalty0 (1):\penalty0
  e70017, 2026.

\bibitem[Demarta and McNeil(2005)]{demarta2005t}
Stefano Demarta and Alexander~J McNeil.
\newblock The t copula and related copulas.
\newblock \emph{International Statistical Review}, 73\penalty0 (1):\penalty0
  111--129, 2005.

\bibitem[Embrechts et~al.(2009)Embrechts, Ne{\v{s}}lehov{\'a}, and
  W{\"u}thrich]{embrechts2009additivity}
Paul Embrechts, Johanna Ne{\v{s}}lehov{\'a}, and Mario~V W{\"u}thrich.
\newblock Additivity properties for value-at-risk under archimedean dependence
  and heavy-tailedness.
\newblock \emph{Insurance: Mathematics and Economics}, 44\penalty0
  (2):\penalty0 164--169, 2009.

\bibitem[Escoufier(1973)]{escoufier1973traitement}
Yves Escoufier.
\newblock Le traitement des variables vectorielles.
\newblock \emph{Biometrics}, 29\penalty0 (4):\penalty0 751--760, 1973.

\bibitem[Fang et~al.(2023)Fang, Chang, Park, and Tseng]{fang2023heavy}
Yusi Fang, Chung Chang, Yongseok Park, and George~C Tseng.
\newblock Heavy-tailed distribution for combining dependent p-values with
  asymptotic robustness.
\newblock \emph{Statistica Sinica}, 33:\penalty0 1115--1142, 2023.

\bibitem[Fithian and Lei(2022)]{fithian2022conditional}
William Fithian and Lihua Lei.
\newblock Conditional calibration for false discovery rate control under
  dependence.
\newblock \emph{The Annals of Statistics}, 50\penalty0 (6):\penalty0
  3091--3118, 2022.

\bibitem[Gasparin et~al.(2025)Gasparin, Wang, and
  Ramdas]{gasparin2025combining}
Matteo Gasparin, Ruodu Wang, and Aaditya Ramdas.
\newblock Combining exchangeable p-values.
\newblock \emph{Proceedings of the National Academy of Sciences}, 122\penalty0
  (11):\penalty0 e2410849122, 2025.

\bibitem[Genovese et~al.(2006)Genovese, Roeder, and
  Wasserman]{genovese2006false}
Christopher~R Genovese, Kathryn Roeder, and Larry Wasserman.
\newblock False discovery control with p-value weighting.
\newblock \emph{Biometrika}, 93\penalty0 (3):\penalty0 509--524, 2006.

\bibitem[Gui et~al.(2025)Gui, Jiang, and Wang]{gui2023aggregating}
Lin Gui, Yuchao Jiang, and Jingshu Wang.
\newblock Aggregating dependent signals with heavy-tailed combination tests.
\newblock \emph{Biometrika}, 112\penalty0 (4):\penalty0 asaf038, 2025.

\bibitem[Hao et~al.(2021)Hao, Hua, and Zhang]{hao2021somde}
Minsheng Hao, Kui Hua, and Xuegong Zhang.
\newblock Somde: a scalable method for identifying spatially variable genes
  with self-organizing map.
\newblock \emph{Bioinformatics}, 37\penalty0 (23):\penalty0 4392--4398, 2021.

\bibitem[Kost and McDermott(2002)]{kost2002combining}
James~T Kost and Michael~P McDermott.
\newblock Combining dependent p-values.
\newblock \emph{Statistics \& Probability Letters}, 60\penalty0 (2):\penalty0
  183--190, 2002.

\bibitem[Liu et~al.(2025)Liu, Meng, and Pillai]{liu2025heavily}
Tianle Liu, Xiao-Li Meng, and Natesh~S Pillai.
\newblock A heavily right strategy for statistical inference with dependent
  studies in any dimension.
\newblock \emph{arXiv preprint arXiv:2501.01065}, 2025.

\bibitem[Liu and Xie(2020)]{liu2020cauchy}
Yaowu Liu and Jun Xie.
\newblock Cauchy combination test: a powerful test with analytic p-value
  calculation under arbitrary dependency structures.
\newblock \emph{Journal of the American Statistical Association}, 115\penalty0
  (529):\penalty0 393--402, 2020.

\bibitem[Liu et~al.(2019)Liu, Chen, Li, Morrison, Boerwinkle, and
  Lin]{liu2019acat}
Yaowu Liu, Sixing Chen, Zilin Li, Alanna~C Morrison, Eric Boerwinkle, and
  Xihong Lin.
\newblock {ACAT}: a fast and powerful p value combination method for
  rare-variant analysis in sequencing studies.
\newblock \emph{The American Journal of Human Genetics}, 104\penalty0
  (3):\penalty0 410--421, 2019.

\bibitem[Marandon et~al.(2024)Marandon, Lei, Mary, and
  Roquain]{marandon2024adaptive}
Ariane Marandon, Lihua Lei, David Mary, and Etienne Roquain.
\newblock Adaptive novelty detection with false discovery rate guarantee.
\newblock \emph{The Annals of Statistics}, 52\penalty0 (1):\penalty0 157--183,
  2024.

\bibitem[McNeil et~al.(2015)McNeil, Frey, and
  Embrechts]{mcneil2015quantitative}
Alexander~J McNeil, R{\"u}diger Frey, and Paul Embrechts.
\newblock \emph{Quantitative risk management: Concepts, techniques and
  tools-revised edition}.
\newblock Princeton university press, 2015.

\bibitem[M{\"u}ller and Stoyan(2002)]{muller2002comparison}
Alfred M{\"u}ller and Dietrich Stoyan.
\newblock \emph{Comparison methods for stochastic models and risks}.
\newblock Wiley, New York, 2002.

\bibitem[Nelsen(2006)]{nelsen2006introduction}
Roger~B Nelsen.
\newblock \emph{An introduction to copulas}.
\newblock Springer, 2006.

\bibitem[Nikoloulopoulos et~al.(2009)Nikoloulopoulos, Joe, and
  Li]{nikoloulopoulos2009extreme}
Aristidis~K Nikoloulopoulos, Harry Joe, and Haijun Li.
\newblock Extreme value properties of multivariate t copulas.
\newblock \emph{Extremes}, 12:\penalty0 129--148, 2009.

\bibitem[Rao et~al.(2021)Rao, Barkley, Fran{\c{c}}a, and
  Yanai]{rao2021exploring}
Anjali Rao, Dalia Barkley, Gustavo~S Fran{\c{c}}a, and Itai Yanai.
\newblock Exploring tissue architecture using spatial transcriptomics.
\newblock \emph{Nature}, 596\penalty0 (7871):\penalty0 211--220, 2021.

\bibitem[Resnick(2007)]{resnick2007heavy}
Sidney~I Resnick.
\newblock \emph{Heavy-tail phenomena: Probabilistic and statistical modeling},
  volume~10.
\newblock Springer Science \& Business Media, 2007.

\bibitem[Resnick(2008{\natexlab{a}})]{resnick2008extreme}
Sidney~I Resnick.
\newblock \emph{Extreme values, regular variation, and point processes},
  volume~4.
\newblock Springer Science \& Business Media, 2008{\natexlab{a}}.

\bibitem[Resnick(2008{\natexlab{b}})]{resnick2008multivariate}
Sidney~I Resnick.
\newblock Multivariate regular variation on cones: application to extreme
  values, hidden regular variation and conditioned limit laws.
\newblock \emph{Stochastics: An International Journal of Probability and
  Stochastics Processes}, 80\penalty0 (2-3):\penalty0 269--298,
  2008{\natexlab{b}}.

\bibitem[Satija et~al.(2015)Satija, Farrell, Gennert, Schier, and
  Regev]{satija2015spatial}
Rahul Satija, Jeffrey~A Farrell, David Gennert, Alexander~F Schier, and Aviv
  Regev.
\newblock Spatial reconstruction of single-cell gene expression data.
\newblock \emph{Nature Biotechnology}, 33\penalty0 (5):\penalty0 495--502,
  2015.

\bibitem[Sellke et~al.(2001)Sellke, Bayarri, and Berger]{Sellke01022001}
Thomas Sellke, M.~J. Bayarri, and James~O. Berger.
\newblock Calibration of p values for testing precise null hypotheses.
\newblock \emph{The American Statistician}, 55\penalty0 (1):\penalty0 62--71,
  2001.

\bibitem[Shaked and Shanthikumar(2007)]{shaked2007stochastic}
Moshe Shaked and J~George Shanthikumar.
\newblock \emph{Stochastic orders}.
\newblock Springer, 2007.

\bibitem[Sklar(1959)]{sklar1959fonctions}
Abe Sklar.
\newblock Fonctions de r{\'e}partition {\`a} n dimensions et leurs marges.
\newblock In \emph{Annales de l'ISUP}, volume~8, pages 229--231, 1959.

\bibitem[Sun et~al.(2020)Sun, Zhu, and Zhou]{sun2020statistical}
Shiquan Sun, Jiaqiang Zhu, and Xiang Zhou.
\newblock Statistical analysis of spatial expression patterns for spatially
  resolved transcriptomic studies.
\newblock \emph{Nature Methods}, 17\penalty0 (2):\penalty0 193--200, 2020.

\bibitem[Svensson et~al.(2018)Svensson, Teichmann, and
  Stegle]{svensson2018spatialde}
Valentine Svensson, Sarah~A Teichmann, and Oliver Stegle.
\newblock Spatialde: identification of spatially variable genes.
\newblock \emph{Nature Methods}, 15\penalty0 (5):\penalty0 343--346, 2018.

\bibitem[Vovk and Wang(2020)]{vovk2020combining}
Vladimir Vovk and Ruodu Wang.
\newblock Combining p-values via averaging.
\newblock \emph{Biometrika}, 107\penalty0 (4):\penalty0 791--808, 2020.

\bibitem[Vovk et~al.(2022)Vovk, Wang, and Wang]{vovk2022admissible}
Vladimir Vovk, Bin Wang, and Ruodu Wang.
\newblock Admissible ways of merging p-values under arbitrary dependence.
\newblock \emph{The Annals of Statistics}, 50\penalty0 (1):\penalty0 351--375,
  2022.

\bibitem[Weng and Zhang(2012)]{weng2012characterization}
Chengguo Weng and Yi~Zhang.
\newblock Characterization of multivariate heavy-tailed distribution families
  via copula.
\newblock \emph{Journal of Multivariate Analysis}, 106:\penalty0 178--186,
  2012.

\bibitem[Wilson(2019)]{wilson2019harmonic}
Daniel~J Wilson.
\newblock The harmonic mean p-value for combining dependent tests.
\newblock \emph{Proceedings of the National Academy of Sciences}, 116\penalty0
  (4):\penalty0 1195--1200, 2019.

\bibitem[Zhu et~al.(2021)Zhu, Sun, and Zhou]{zhu2021spark}
Jiaqiang Zhu, Shiquan Sun, and Xiang Zhou.
\newblock Spark-x: non-parametric modeling enables scalable and robust
  detection of spatial expression patterns for large spatial transcriptomic
  studies.
\newblock \emph{Genome Biology}, 22\penalty0 (1):\penalty0 184, 2021.

\end{thebibliography}


\begin{thebibliography}{8}
\providecommand{\natexlab}[1]{#1}
\providecommand{\url}[1]{\texttt{#1}}
\expandafter\ifx\csname urlstyle\endcsname\relax
  \providecommand{\doi}[1]{doi: #1}\else
  \providecommand{\doi}{doi: \begingroup \urlstyle{rm}\Url}\fi

\bibitem[Andersson et~al.(2021)Andersson, Larsson, Stenbeck, Salm{\'e}n,
  Ehinger, Wu, Al-Eryani, Roden, Swarbrick, Borg, et~al.]{andersson2021spatial}
Alma Andersson, Ludvig Larsson, Linnea Stenbeck, Fredrik Salm{\'e}n, Anna
  Ehinger, Sunny~Z Wu, Ghamdan Al-Eryani, Daniel Roden, Alex Swarbrick, {\AA}ke
  Borg, et~al.
\newblock Spatial deconvolution of her2-positive breast cancer delineates
  tumor-associated cell type interactions.
\newblock \emph{Nature Communications}, 12\penalty0 (1):\penalty0 6012, 2021.

\bibitem[Beirlant et~al.(2004)Beirlant, Goegebeur, Segers, and
  Teugels]{beirlant2006statistics}
Jan Beirlant, Yuri Goegebeur, Johan Segers, and Jozef~L Teugels.
\newblock \emph{Statistics of extremes: Theory and applications}.
\newblock John Wiley \& Sons, 2004.

\bibitem[Chen et~al.(2022)Chen, Liao, Cheng, Ma, Wu, Lai, Qiu, Yang, Xu, Hao,
  et~al.]{chen2022spatiotemporal}
Ao~Chen, Sha Liao, Mengnan Cheng, Kailong Ma, Liang Wu, Yiwei Lai, Xiaojie Qiu,
  Jin Yang, Jiangshan Xu, Shijie Hao, et~al.
\newblock Spatiotemporal transcriptomic atlas of mouse organogenesis using dna
  nanoball-patterned arrays.
\newblock \emph{Cell}, 185\penalty0 (10):\penalty0 1777--1792, 2022.

\bibitem[Hardy et~al.(1952)Hardy, Littlewood, and
  P{\'o}lya]{hardy1952inequalities}
Godfrey~Harold Hardy, John~Edensor Littlewood, and George P{\'o}lya.
\newblock \emph{Inequalities}.
\newblock Cambridge university press, 1952.

\bibitem[Mao and Hu(2015)]{mao2015relations}
Tiantian Mao and Taizhong Hu.
\newblock Relations between the spectral measures and dependence of mev
  distributions.
\newblock \emph{Extremes}, 18:\penalty0 65--84, 2015.

\bibitem[Maynard et~al.(2021)Maynard, Collado-Torres, Weber, Uytingco, Barry,
  Williams, Catallini, Tran, Besich, Tippani, et~al.]{maynard2021transcriptome}
Kristen~R Maynard, Leonardo Collado-Torres, Lukas~M Weber, Cedric Uytingco,
  Brianna~K Barry, Stephen~R Williams, Joseph~L Catallini, Matthew~N Tran,
  Zachary Besich, Madhavi Tippani, et~al.
\newblock Transcriptome-scale spatial gene expression in the human dorsolateral
  prefrontal cortex.
\newblock \emph{Nature Neuroscience}, 24\penalty0 (3):\penalty0 425--436, 2021.

\bibitem[Resnick(2007)]{resnick2007heavy}
Sidney~I Resnick.
\newblock \emph{Heavy-tail phenomena: Probabilistic and statistical modeling},
  volume~10.
\newblock Springer Science \& Business Media, 2007.

\bibitem[Wang et~al.(2018)Wang, Allen, Wright, Sylwestrak, Samusik, Vesuna,
  Evans, Liu, Ramakrishnan, Liu, et~al.]{wang2018three}
Xiao Wang, William~E Allen, Matthew~A Wright, Emily~L Sylwestrak, Nikolay
  Samusik, Sam Vesuna, Kathryn Evans, Cindy Liu, Charu Ramakrishnan, Jia Liu,
  et~al.
\newblock Three-dimensional intact-tissue sequencing of single-cell
  transcriptional states.
\newblock \emph{Science}, 361\penalty0 (6400):\penalty0 eaat5691, 2018.

\end{thebibliography}

\newpage

\renewcommand{\thesection}{S\arabic{section}}   
\renewcommand{\thetable}{S\arabic{table}}   
\renewcommand{\thefigure}{S\arabic{figure}}

\setcounter{section}{0}
\setcounter{figure}{0}
\setcounter{table}{0}

\renewcommand {\thepage} {S\arabic{page}}
\setcounter{page}{1}

\begin{center}
	{\Large\bfseries Supplementary Information for \\[0.5em]
		``Validity and Power of Heavy-Tailed Combination Tests under Asymptotic Dependence''}
\end{center}

\vspace{0.7cm}

\section{More Notations}
For simplicity, we use $\Pr_0$ to denote $\Pr_{H_0^\mathrm{global}}$ and $[n]=\{1,\dots,n\}$. Further, $\odot$ stands for the Hadamard product and $\circ$ denotes function composition. Let $\Phi$ denote the CDF of the standard normal. %
The power of a vector is taken element-wise, i.e.,
\[
\mathbf{v}^{\beta} = (v_1^{\beta},\dots,v_n^\beta),~\mathbf{v}\in\R^n,~\beta\in\R ~~~\mbox{(whenever suitable)}.
\]

\section{Additional background on MRV copulas and MRV distributions}
In this section, we provide additional background on MRV copulas and  distributions from extreme value theory, focusing on the key concepts needed for Proposition 3.4. This material complements the main text and does not involve the combination test framework.

We begin by introducing additional concepts from extreme value theory that complement the discussion of MRV copulas in the main text and clarify their connections to related objects. We then introduce equivalent definitions of multivariate regularly varying distributions and their associated Radon measures, along with their polar (radial–angular) decomposition. Finally, we discuss tail standardization and establish the relationship between the limiting measures and spectral measures of MRV distributions and their associated copulas.

\subsection{Additional concepts and structure of MRV Copulas}
In this subsection, we develop additional concepts from extreme value theory for MRV copulas, including the domain of attraction, exponent measure, and polar (radial–angular) decomposition, and clarify their connections with concepts like the stable tail dependence function, the limiting copula and spectral measure of MRV copulas introduced in the main text.

\subsubsection{Domain of attraction and multivariate extreme value distribution}
The domain of attraction and multivariate extreme value distributions provide a fundamental framework for characterizing the limiting behavior of extremes under general dependence structures. Specifically, for an independent
sample $\mathbf{X}_1,\ldots,\mathbf{X}_k$ from a distribution function $F$, the distribution of the component-wise maximum, $\mathbf{M}_k$,  is given by
\[
\mathbb{P}\bigl(\mathbf{M}_k \le \mathbf{x}\bigr)
= \mathbb{P}\bigl(\mathbf{X}_1 \le \mathbf{x}, \ldots, \mathbf{X}_k \le \mathbf{x}\bigr)
= F^k(\mathbf{x}), \qquad \mathbf{x} \in \mathbb{R}^n .
\]

Then the limiting distribution is called a multivariate extreme value distribution, and $F$ is in its domain of attraction. 
\begin{defn}
	If there exist $(\mathbf{a}_k)_k$ and $(\mathbf{b}_k)_k$, where $\mathbf{a}_k > \mathbf{0}$, and an $n$-variate distribution function $G$ with non-degenerate
	margins such that
	\begin{equation*} %
		F^k(\mathbf{a}_k \mathbf{x} + \mathbf{b}_k)
		\xrightarrow{d} G(\mathbf{x}),
		\qquad k \to \infty ,
	\end{equation*}
	then  we say that $F$ is in the {\it domain of attraction} of $G$,
	denoted by $F \in D(G)$. Moreover, $G$ is called an {\it multivariate extreme value (MEV)} function.  
\end{defn}

One important property of multivariate domains of attraction is their decomposition into marginal and dependence components via copulas. More specifically, let $F$ be an $n$-variate distribution function with marginal distributions $F_j$ and the copula $C$. Also, let $G$ be an $n$-variate MEV function with margins $G_j$ and copula $C^*$.  Then, by ({8.79}) of \citesupp{beirlant2006statistics}, we have that $F\in D(G)$ if and only if $F_j \in D(G_j)$, $\forall i\in [n]$, together with
\begin{equation} \label{eq:MEV}
	\lim_{k\to\infty}
	C^{\,k}(u_1^{1/k},\ldots,u_n^{1/k})
	=
	C^*(\mathbf{u}),
	\qquad
	\mathbf{u}\in[\mathbf{0},\mathbf{1}]\subset\R^n .
\end{equation} 
Given the definition of the MRV copula in Equation (3) of the main text Section 3.1, we can also immediately see that $C_F$ satisfying \eqref{eq:MEV} should be an MRV copula, and any MRV copula can be written as a copula of some $F\in D(G)$. 

\subsubsection{Exponent measure $\mu^*$ and stable tail dependence function $\ell$}
Given an MRV copula $C$, its limiting copula $C^*$ is associated with an exponent measure $\mu^*$.
\begin{defn} \label{def:0127-1}
	Let $C^*$ be a limiting copula of an MRV copula. Its \emph{exponent measure} $\mu^{*}$ can be defined by
	\begin{equation}\label{eq:0127-2}
		\mu^{*}\left([\mathbf{0},\mathbf{z}]^c\right)=-\log C^{*}\left(e^{-1/z_1},\dots,e^{-1/z_n}\right), 
		~~\mathbf{z}\in[\mathbf{0},\bm{\infty})\subset\R^n.
	\end{equation}
\end{defn}
An important property of the exponent measure $\mu^*$ is that it satisfies the homogeneity (see (8.11) of \citesupp{beirlant2006statistics}): for any $0<s<\infty$,
\begin{equation}
	\label{eq:mu-homogeneity}
	\mu^{*}(s\,\cdot)=s^{-1}\mu^{*}(\cdot).
\end{equation}

Notice that \eqref{eq:MEV} is equivalent to 
\begin{equation*} 
	\lim_{t\to\infty}
	t\Bigl\{1-C\bigl(1-v_1/t ,\ldots, 1- v_n/t \bigr)\Bigr\}
	=-\log C^* (e^{-v_1},\ldots,e^{-v_n}),
	~~
	\mathbf{v}\in[\mathbf{0},\bm{\infty})\subset\R^n,
\end{equation*}
then we have the stable tail dependence function $\ell$ of the MRV copula satisfying
\begin{align} \label{eq:260127-10}
	\ell(v_1,\dots,v_n)
	& = \lim_{s\downarrow0} s^{-1} \left(1 - C(1 - sv_1, \dots, 1 - sv_n)\right)\notag\\
	&=   -\log C^* (e^{-v_1},\ldots,e^{-v_n})\notag\\
	& =\mu^{*}\bigl([\mathbf{0},(1/v_1,\ldots,1/v_n)]^c\bigr).
\end{align}

For completeness, we note that the stable tail dependence function $\ell$ and the exponent measure $\mu^*$ can also be defined with an MEV distribution function $G$. For example, for any $n$-variate MEV function $G$, its exponent measure is defined by
\[
\mu^{*}\bigl([\mathbf{0},\mathbf{z}]^c\bigr)=-\log G_{*}(\mathbf{z}),
~~\mathbf{z}\in[\mathbf{0},\bm{\infty})\subset\R^n.
\]
where
\[
G_{*}(\mathbf{z})=G\!\left(
Q_{G_1}(e^{-1/z_1}),\ldots,
Q_{G_n}(e^{-1/z_n})
\right).
\]
The above formulation shows that both objects are intrinsic to the MRV copula and do not depend on a particular choice of marginal distributions.

\subsubsection{The polar decomposition and the spectral measure $H^*$}
The exponent measure $\mu^*$ admits a polar (radial–angular) decomposition, which yields a spectral measure $H^*$ associated with MRV copulas, as introduced in Section 3.1 of the main text. Specifically, $\mu^*$ can be decomposed into a radial component and an angular component, where the angular component is captured by $H^*$ defined on the unit simplex. This representation provides a geometric characterization of dependence, with $H^*$ describing how mass is distributed across different directions.

Specifically, define the mapping $T:~\mathbb{R}^{n}\setminus\{\mathbf{0}\}\to(0,\infty)\times\punitsphereone{n}$ by
\begin{equation}\label{eq:polar}
	T(\mathbf{z})=(r,\boldsymbol{\theta}),
	\qquad
	\text{where } r=\|\mathbf{z}\|_1
	\text{ and }
	\boldsymbol{\theta}=\mathbf{z}/\|\mathbf{z}\|_1.
\end{equation}
That is, $r$ is the radial part and $\bm{\theta}$ is the angular part of $\mathbf{z}$. Note that $T$ is a one-to-one and onto mapping (a bijection).
We now formally define the spectral measure $H^*$:
\begin{defn}
	The \emph{spectral measure} $H^*$ of an exponent measure $\mu^*$ on
	$\punitsphereone{n}$ 
	is defined by
	\begin{equation}
		H^*(B)
		=
		\mu^{*}\bigl(\{\mathbf{z}\in[\mathbf{0},\bm{\infty}):\|\mathbf{z}\|_1\ge1,\,
		\mathbf{z}/\|\mathbf{z}\|_1\in B\}\bigr),
	\end{equation}
	for Borel subsets $B$ of $\punitsphereone{n}$. We denote as $B\in \mathcal{B}(\punitsphereone{n})$.
\end{defn}
The homogeneity of $\mu^{*}$ in \eqref{eq:mu-homogeneity} implies
\[
\mu^{*}\bigl(\{\mathbf{z}\in[\mathbf{0},\bm{\infty}):\|\mathbf{z}\|_1\ge r,\,
\mathbf{z}/\|\mathbf{z}\|_1\in B\}\bigr)
=
r^{-1}  H^*(B),~~0<r<\infty,~B\in \mathcal{B}(\punitsphereone{n}).
\]
That is, in polar coordinates $(r,\bm{\theta})$, the exponent measure $\mu^*$ can factor into a measure in the radial coordinate and the spectral measure in the angular coordinate. In other words, it has the \emph{polar decomposition}:
\begin{equation*}
	\mu^{*}\circ T^{-1}({\rm d}r,{\rm d}\boldsymbol{\theta})
	=
	r^{-2}\,{\rm d}r\,H^*({\rm d}\boldsymbol{\theta}).
\end{equation*}
Accordingly, for any function $g:[\mathbf{0},\bm{\infty})\setminus\{\mathbf{0}\}\to\R$, it holds that 
\begin{align*}
	\int_{[\mathbf{0},\bm{\infty})\setminus\{\mathbf{0}\}} g(\mathbf{z})\,\mu^{*}({\rm d}\mathbf{z})
	&=
	\int_{\punitsphereone{n}}\int_{0}^{\infty}
	g\!\left(r\boldsymbol{\theta}\right)
	r^{-2}\,{\rm d}r\,H^*({\rm d}\boldsymbol{\theta}).
\end{align*}
One special case is that for any $\mathbf{z}\in[\mathbf{0},\bm{\infty})$,
\begin{align}
	\label{eq:v2-260127-4}
	\mu^{*}\bigl([\mathbf{0},\mathbf{z}]^c\bigr)
	&=\int_{[\mathbf{0},\bm{\infty})\setminus\{\mathbf{0}\}}
	\mathbb{I}\!\left(
	\max_{j\in[n]}(y_j/z_j)>1
	\right)\mu^*(\intd\mathbf{y})\notag\\
	& =\int_{\punitsphereone{n}}\int_{0}^{\infty}
	\mathbb{I}\!\left(
	r\max_{j\in[n]}(\theta_j/z_j)>1
	\right)r\intd r^{-2} H^*(\intd\bm{\theta})\notag\\
	&=\int_{\punitsphereone{n}}\max_{j\in[n]}(\theta_j/z_j)H^*({\rm d}\boldsymbol{\theta}),
\end{align}
where $\mathbb{I}(\cdot)$ is the indicator function.
Combining~\eqref{eq:260127-10} with~(\ref{eq:v2-260127-4}) yields
\[
\ell(\mathbf{v}) 
=\int_{\punitsphereone{n}}
\max_{j\in[n]}
\left(
{\theta_j}  \, v_j
\right)
\, H^*(\mathrm{d}\boldsymbol{\theta}),
\qquad
\mathbf{v}\in[\mathbf{0},\bm{\infty}),
\]
which is Equation (2) of the main text.

\subsection{Additional background on MRV distributions}
\label{subsec:equi-def-mrv}
According to Chapter 6 of \cite{resnick2007heavy}, if ${\bf X}\in {\rm MRV}_{-\gamma}$, (that is, it satisfies \eqref{eq:define-mrv} in the main text with  limiting function $\lambda$ in Definition \ref{def:mrv},) then     there exists a Radon measure $\nu$ on $[\mathbf{0},\bm{\infty})\setminus\{\mathbf{0}\}$ such that
\begin{equation} \label{eq:260205-1}
	\lim_{t\to\infty}
	\frac{1-F(t\mathbf{x})}{1-F(t\mathbf{1})}
	=
	\lim_{t\to\infty}
	\frac{\mathbb{P}\!\left[\mathbf{X}/t \in [0,\mathbf{x}]^{c}\right]}
	{\mathbb{P}\!\left[\mathbf{X}/t \in [0,\mathbf{1}]^{c}\right]}
	=
	\nu\!\left([0,\mathbf{x}]^{c}\right),
\end{equation}
for all points $\mathbf{x}\in[\mathbf{0},\bm{\infty})\setminus\{\mathbf{0}\}$ which are continuity points of the function $\nu([0,\cdot]^{c})$. 
It holds that  $\lambda({\bf x})=\nu\!\left([0,\mathbf{x}]^{c}\right)$. 
Following Theorem 6.1 of \citesupp{resnick2007heavy}, for this Radon measure $\nu$, we also have
\begin{equation}
	\label{eq:nu-borel-set}
	\lim_{t\to\infty}\frac{\Pr\left(\mathbf X\in tA\right)}{\Pr\left(\mathbf X\in t[\mathbf 0,\mathbf 1]^c\right)}
	=\nu(A), 
\end{equation}
for any Borel set $A\in[\mathbf{0},\bm{\infty}) \setminus \{\mathbf{0}\}$.

According to Section 2.5 \cite{resnick2008multivariate}, similar to the exponent measure, the Radon measure $\nu$ also has a homogeneity property
\[
\nu(t\cdot)=t^{-\gamma}\nu(\cdot).
\]
Notice that given the relationship $\lambda({\bf x})=\nu\!\left([0,\mathbf{x}]^{c}\right)$, we also have 
\[
\lambda(s\mathbf{x})=\nu\!\left(s[0,\mathbf{x}]^{c}\right) = s^{-\gamma}\nu\!\left([0,\mathbf{x}]^{c}\right) = s^{-\gamma}\lambda(\mathbf{x}), 
\]
as claimed in Section 3.4 of the main text. Additionally, we have the following polar decomposition
\[
\nu\circ T^{-1}({\rm d}r,{\rm d}\boldsymbol{\theta})
=
r^{-\gamma-1}\,{\rm d}r\,H({\rm d}\boldsymbol{\theta}),
\]
where $T$ is the mapping given by \eqref{eq:polar}, and $H$ is called the spectral measure of $\nu$ (or $X$) with the definition that
\[
H(B)
=
\nu\bigl(\{\mathbf{z}\in[\mathbf{0},\bm{\infty}):\|\mathbf{z}\|_1\ge1,\,
\mathbf{z}/\|\mathbf{z}\|_1\in B\}\bigr).
\]

\subsection{MRV distributions vs MRV copulas}
It is worth noting that the spectral measure $H$ of the MRV distribution is not the same as the spectral measure $H^*$ of its (MRV) copula. In this section, we will connect the radon measure $\nu$ with the exponent measure $\mu^*$ and spectral measure $H^*$ of its MRV copula.

\begin{restatable}{prop}{Intermediate}
	\label{prop:measure-equation}
	Let ${\bf X}\in {\rm MRV}_{-\gamma}$ with $\gamma>0$ and $\nu(\cdot)$ is its Radon measure satisfying \eqref{eq:260205-1}. $\mu^*$ and $H^*$ are the exponent measure and spectral measure of its MRV copula. Then, for any Borel set $B\in[\mathbf{0},\bm{\infty})\setminus\{\mathbf{0}\}$, 
	\begin{equation}
		\label{eq:measure-equation}
		\nu(B) = \mu^*\left(\left\{\mathbf{x}:~\left(\mathbf{c}\odot\mathbf{x}\right)^{1/\gamma}\in B\right\}\right).
	\end{equation} 
	For specific sets,  we have
	\begin{align}
		\nu([\mathbf 0,{\bf x}]^c)
		=&\mu^*\left([\mathbf 0,(c_1^{-1}x_1^\gamma,\dots,c_n^{-1}x_n^\gamma)]^c\right)\notag\\
		=&{  \int_{\punitsphereone{n}} \max_i \{ c_ix_i ^{-\gamma} \theta_i\} H^*(\mathrm{d}{\bm\theta})},\label{eq:cube}\\
		\nu(\{\mathbf{x}: \|\mathbf{x}\|_1>n \})
		=&\mu^*( \{ \mathbf{x}: \|( {\mathbf c}\odot \mathbf{x})^{^{1/\gamma}}\|_1>n\})\notag\\
		=& n^{-\gamma}\int_{\punitsphereone{n}}
		\left(\sum_{i=1}^n(c_i{\theta}_i)^{1/\gamma}\right)^\gamma H^*(\mathrm{d}\bm{\theta}),\label{eq:ball}
	\end{align}
	where  $c_i := \lim_{t\to\infty}\frac{\Pr\left(X_i>t\right)}{\Pr\left(\mathbf X\in t[\mathbf 0,\mathbf 1]^c\right)}
	=\nu(B_i)$ with $B_i=\{\mathbf{x}\in\R_+^n:x_i>1\}$, $i\in [n]$. Here, when some \(c_i=0\), we use the convention \(c_i^{-1}=\infty\) and   we interpret
	\[
	[\mathbf 0,(c_1^{-1}x_1^\gamma,\dots,c_n^{-1}x_n^\gamma)]^c
	=
	\{{\bf z}\in[{\bf 0},\boldsymbol{\infty}): z_i>c_i^{-1}x_i^\gamma
	\text{ for some } i \text{ with } c_i>0\}.
	\]
\end{restatable}

To further differentiate between $H$ and $H^*$, we see the following comparison. For the spectral measure $H$ of $\nu$, by definition,
\[
\nu([{\bf 0},{\bf x}]^c)= \int_{\punitsphereone{n}}
\max_{i\in[n]} \left(\theta_i^\gamma x_i^{-\gamma}\right)\,H(\mathrm{d}{\boldsymbol{\theta}}).
\]
On the other hand, we have
\[
\nu([{\bf 0},{\bf x}]^c)
=\ell(c_1x_1^{-\gamma},\dots,c_nx_n ^{-\gamma})
= {\int_{\punitsphereone{n}}
	\max_{i\in[n]}
	\left(
	{\theta_i}  \, c_ix_i^{-\gamma}
	\right)
	\, H^*(\mathrm{d}\boldsymbol{\theta})},
\]
where $c_i = \nu(B_i)$ with $B_i=\{\mathbf{x}\in\R_+^n:x_i>1\}$, $i\in [n]$.
Hence, it follows that 
\[
\int_{\punitsphereone{n}}
\max_{i\in[n]} \left(\theta_i^\gamma x_i^{-\gamma}\right)\,H(\mathrm{d}{\boldsymbol{\theta}})
=
{\int_{\punitsphereone{n}}
	\max_{i\in[n]}
	\left(
	{\theta_i}  \, c_ix_i^{-\gamma}
	\right)
	\, H^*(\mathrm{d}\boldsymbol{\theta})},
~~\forall \, {\bf x}>{\bf 0}.
\]

\section{Another partial ordering on MRV copulas}
\label{sec:dependence-vs-combination-test}
In this section, we introduce a new partial ordering on MRV copulas to compare the strength of asymptotic dependence, which enables stronger theoretical results on how dependence impacts the conservativeness and power of heavy-tailed combination tests when $\gamma < 1$. Specifically, we define a partial order based on the convex order of the spectral measures associated with MRV copulas:

\begin{defn}[Convex order]
    For any two measures $H_1$ and $H_2$ on the same space $\Theta$, \emph{$H_1$ dominates $H_2$ in convex order}, denoted by $H_1\ge_{\rm cx} H_2 $, holds if $\int_\Theta   \phi({\mathbf \theta}) {\rm d} H_1({\mathbf \theta}) \ge \int_\Theta \phi({\mathbf \theta}) {\rm d} H_2({\mathbf \theta})$ for any convex function $\phi$.
\end{defn}
 
\begin{defn}[Partial order between MRV copulas]
Let $C$ and $C'$ be MRV copulas with spectral measures $H^*$
and $H^{**}$, respectively. We say that $C'$ \emph{dominates} $C$, denoted by $C\preceq C'$, if
$$ H^*\ge_{\rm cx} H^{**}.$$
\end{defn}
Intuitively, $C\preceq C'$ corresponds to $C$ being less asymptotically positively dependent than $C'$. Similar to \Cref{prop:concordance-order-upper-and-lower-bound} in the main text, we can prove that all MRV copulas are between asymptotic independence and asymptotic complete dependence in terms of this new partial order:
\begin{restatable}{prop}{PartialOrder}
\label{prop:partial-order}
Let $C_\mathrm{indep}$ and $C_{\mathrm{cdep}}$ be the MRV copulas under asymptotic independence and asymptotic complete dependence, respectively. Then for any MRV copula $C$, we have
$$C_\mathrm{indep} \preceq C \preceq C_{\mathrm{cdep}}.$$
\end{restatable}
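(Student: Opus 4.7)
The plan is to work directly with spectral measures on the simplex $\punitsphereone{n}$, reducing both inequalities to standard convex-analytic facts once the right moment constraints are in hand. The key preliminary is to establish that every MRV copula's spectral measure $H^*$ satisfies $\int \theta_i\, \intd H^* = 1$ for each $i$ (hence total mass $n$ and vector mean $\mathbf{1}$). I would obtain this by plugging $\bm v = \mathbf{e}_i$ into the integral representation \eqref{eq:define-l}, which collapses the maximum to $\theta_i$, and combining with the identity $\ell(\mathbf{e}_i) = 1$ that comes from \eqref{eq:c-star-and-l} and the uniform marginals of $C^*$. The spectral measures $H^*_\mathrm{indep}$ and $H^*_\mathrm{cdep}$ described in \Cref{def:asymptotic-independence,def:asymptotic-complete-dependence} share the same total mass and mean vector, so the three measures are directly comparable in convex order.

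For the upper bound $C \preceq C_\mathrm{cdep}$, I would treat $H^*/n$ as a probability measure on $\punitsphereone{n}$ with barycenter $\mathbf{1}/n$ and apply multivariate Jensen's inequality to any convex $\phi$. This immediately yields $\int \phi\, \intd H^* \ge n\phi(\mathbf{1}/n) = \int \phi\, \intd H^*_\mathrm{cdep}$, which is exactly the required convex-order domination $H^* \ge_{\mathrm{cx}} H^*_\mathrm{cdep}$. For the lower bound $C_\mathrm{indep} \preceq C$, the plan is to exploit the barycentric identity $\bm\theta = \sum_i \theta_i \mathbf{e}_i$ valid on the simplex; convexity of $\phi$ gives the pointwise bound $\phi(\bm\theta) \le \sum_i \theta_i \phi(\mathbf{e}_i)$, and integrating against $H^*$ while invoking the moment constraint produces $\int \phi\, \intd H^* \le \sum_i \phi(\mathbf{e}_i) = \int \phi\, \intd H^*_\mathrm{indep}$, i.e., $H^*_\mathrm{indep} \ge_{\mathrm{cx}} H^*$.

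The only step requiring real care is isolating $\ell(\mathbf{e}_i) = 1$ cleanly from the paper's framework; after that, each direction collapses to a one-line consequence of Jensen (for the upper bound) or barycentric convexity (for the lower bound). No regularity hurdles arise, since any convex $\phi$ on the compact simplex $\punitsphereone{n}$ is automatically bounded and therefore integrable against any finite measure, so no truncation or approximation scheme is needed.
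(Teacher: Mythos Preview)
Your proposal is correct and follows essentially the same route as the paper's proof: both directions are handled by the identical two devices—Jensen's inequality applied to the probability measure $H^*/n$ for $C \preceq C_{\mathrm{cdep}}$, and the pointwise barycentric bound $\phi(\bm\theta) \le \sum_i \theta_i \phi(\mathbf e_i)$ integrated against $H^*$ for $C_{\mathrm{indep}} \preceq C$, both resting on the moment identity $\int \theta_i\, H^*(\intd\bm\theta)=1$. Your explicit derivation of that moment identity from $\ell(\mathbf e_i)=1$ and the integrability remark are welcome additions but do not alter the argument.
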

As established in \Cref{prop:sum-tail-general-norm}, the asymptotic dependence structure characterized by the MRV copula governs the tail behavior of the $L_1$ norm of the MRV random vector. Furthermore, the convex ordering of the associated spectral measure and the convexity of  $\bm{\theta}\mapsto {{\|(\mathbf{c}\odot\bm{\theta})^{1/\gamma}\|_1}^\gamma}$ for $\gamma\in (0,1]$ directly induce  this monotonicity:
\begin{restatable}{prop}{QVSH}
\label{prop:q-vs-H}
Suppose that the MRV copula $C$ has spectral measure $H^*$
and the MRV copula $C'$ has spectral measure $H^{**}$.
If $C\succeq C'$, then $h(\gamma,H^*) \le h(\gamma,H^{**}) $ for $\gamma\in (0,1].$
\end{restatable}
According to the proof of \Cref{thm:combination-test-valid},
\[
h(\gamma,H^*)=n^{1-\gamma}\lim_{\alpha\downarrow0}\frac{\Pr_{H_0^{\mathrm{global}}}\left(P^{F,\bm{\omega}}_{\mathrm{comb}}\le\alpha\right)}{\alpha}
\]
characterizes the asymptotic type-I error of the heavy-tailed combination test, with $\boldsymbol{\omega}=(1,\dots,1)$. Consequently, a larger partial order of the MRV copula leads to a smaller type-I error, indicating a more conservative test. We can also obtain the same monotonicity property for power, similar to how we establish \Cref{thm:bonferroni-more-conservative-with-dependence-power} after \Cref{thm:bonferroni-more-conservative-with-dependence}.

Unfortunately, establishing the partial order $C\succeq C'$ for specific forms of MRV copulas is more challenging than the pointwise order. In low-dimensional cases or certain high-dimensional cases, the order is tractable. For example, when $n = 2$, the order $C \succeq C'$  corresponds to the pointwise inequality $\int_0^x H^*(t,1){\rm d}t\le \int_0^x H^{**}(t,1) {\rm d}t$ for $x\in [0,1]$.\footnote{Note that when $n=2$, $H^*$ is a measure on $\{(x_1,x_2)\in\R_+^2: x_1+x_2=1\}$. We have $H^*\ge_{\rm cx}  H^{**}$  if and only if  $H_1^*\ge_{\rm cx}  H_1^{**}$, where $H_1^*$ is the first marginal distribution of $H^*$. Therefore, this is again equivalent to $\int_0^x H^*(t,1){\rm d}t\le \int_0^x H^{**}(t,1) {\rm d}t$ for $x\in [0,1]$.} 
For higher dimensions ($n > 2$), similar pointwise ordering can hold when independent pairs of components exist. However, general theoretical results on ordering between spectral measures in dimensions $n > 2$ are limited and technically difficult to obtain (see, e.g., \citesupp{mao2015relations}). For instance, it remains unclear whether increasing the entries of the correlation matrix in a multivariate t-copula leads to a spectral measure that is larger in the convex order.

Some special cases nevertheless exhibit the monotonicity described in \Cref{prop:q-vs-H}. For example, the Clayton copula admits this monotonicity property in general dimensions, although the argument does not directly follow from the convex order of spectral measures.

Empirically, our simulations in \Cref{subsec:empirical-validity} show that $h(\gamma,H^*)$ tends to decrease as dependence increases with a fixed $\gamma < 1$, suggesting that the test becomes more conservative. This observation points to a promising direction for future work: developing a more refined order of MRV copulas and its implications for the tail behavior of the MRV random vector.

\section{Theoretical Results in The Examples}

\subsection{MRV of Absolute Multivariate Gaussian and Multivariate t}
We show that multivariate Gaussian and multivariate t-distributed random vectors retain MRV copulas after taking absolute values, a property needed for Examples 3.3 and 3.4 in the main text.
Specifically, we first show that the absolute value transformation preserves asymptotic independence for multivariate Gaussian variables. For the multivariate $t$ distribution, we show that the copula of the absolute-valued variables continues to exhibit multivariate regular variation.
\begin{restatable}{prop}{AbsoluteGauss}
\label{prop:abs-preserve-independence}
Let $\mathbf{T}=(T_1,\dots,T_n)$ follow a multivariate Gaussian distribution with a nondegenerate correlation matrix. Then $(|T_1|,\dots,|T_n|)$ is asymptotically independent.
\end{restatable}

\begin{restatable}{prop}{Absolutet}
\label{prop:absolute-t-mrv-copula}
Let $\mathbf{T}=(T_1,\ldots,T_n)$ follow a multivariate
$t$ distribution with degrees of freedom $\nu>0$, location parameter
$\mathbf u\in\mathbb R^n$, and positive definite scale matrix $\Sigma$.
Then the copula of
$(|T_1|,\ldots,|T_n|)$
is an MRV copula.
\end{restatable}

\subsection{Type-A and Type-B Alternatives}
\label{sec:p-alternative}
We verify that the Type-A and Type-B alternatives defined in \Cref{exmp:alternative_types} satisfy \Cref{asmp:p-alternative}. We also characterize the set of dominating signals for each alternative type.

We first consider the Type-A alternative, where p-values are generated from continuous CDFs within the following class:
\begin{defn}
\label{def:certain-distributions}
Let $\mathscr{D}$ denote the class of continuous distributions whose survival functions $\overline{G}$ satisfying one of the following conditions:
\begin{enumerate}[label=(\roman*)]
\item $\lim_{x\to\infty}\frac{\overline{G}(x-\mu)}{\overline{G}(x)}\in(0,\infty)$.
\item $G$ is the standard normal distribution.
\end{enumerate}
\end{defn}
We now establish that Type-A alternatives with $G \in \mathscr{D}$ satisfy \Cref{asmp:p-alternative} and characterize their dominating signals.

\begin{restatable}{prop}{AlternativeA}
\label{prop:alternative-a}
Suppose $C$ is an MRV copula and $\mathbf{U}\sim C$. The p-vector $\mathbf{P}$ is given by
\[
(1-P_1,\dots,1-P_n) \overset{d}{=}(G_1(G_1^{-1}(U_1)+\mu_1),\dots, G_n(G_n^{-1}(U_n)+\mu_n)),
\]
where $G_1,\dots,G_n$ are in $\mathscr{D}$, and $\mu_1,\dots,\mu_n\ge0$. Then the p-vector $\mathbf{P}$ meets \Cref{asmp:p-alternative} and $\beta=1$. Moreover, the dominating signals 
\begin{enumerate}[label=(\roman*)]
\item $I_{\mathbf{P}}=[n]$ when $G_1,\dots,G_n$ belong to $\mathscr{D}$ (i),
\item $I_{\mathbf{P}}=\{i:\mu_i=\max_j{\mu_j}\}$  when $G_1,\dots,G_n$ belong to $\mathscr{D}$ (ii).
\end{enumerate}
\end{restatable}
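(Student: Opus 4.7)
The plan is to verify the three conditions of \Cref{asmp:p-alternative} in order and, along the way, identify $I_{\mathbf{P}}$ in each of the two cases on $G_i$. Condition (i) is immediate: for continuous $G_i$ the map $u\mapsto G_i(G_i^{-1}(u)+\mu_i)$ is continuous on $(0,1)$, so each $P_i$ is continuous. For condition (ii), this same map is strictly increasing, so by the invariance of copulas under strictly monotone marginal transformations, the copula of $(1-P_1,\dots,1-P_n)$ coincides with the copula $C$ of $(U_1,\dots,U_n)$ and is MRV by hypothesis.

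The core step is to compute the lower-tail behaviour $\Pr(P_i<t)$ as $t\downarrow 0$. Writing $Y_i := G_i^{-1}(U_i)\sim G_i$ gives
\[
\Pr(P_i<t) \;=\; \Pr\!\bigl(Y_i > G_i^{-1}(1-t)-\mu_i\bigr) \;=\; \overline{G}_i\!\bigl(G_i^{-1}(1-t)-\mu_i\bigr).
\]
Under condition (i) of \Cref{def:certain-distributions}, applying the tail-shift hypothesis at $x_t := G_i^{-1}(1-t)\to\infty$ yields $\Pr(P_i<t)\sim c_{i,1}\,t$ with $c_{i,1}\in(0,\infty)$, since $\overline{G}_i(x_t)=t$. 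Every pairwise ratio $\Pr(P_i<t)/\Pr(P_j<t)$ then converges to a strictly positive finite limit, so $I_{\mathbf{P}}=[n]$ and the tail-comparability part of \Cref{asmp:p-alternative}~(iii) is automatic.

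Under condition (ii), all $G_i=\Phi$, and I would use the Mills ratio asymptotic $\overline{\Phi}(x)\sim\phi(x)/x$ together with the algebraic identity $\phi(x-\mu)=\phi(x)\exp(\mu x-\mu^2/2)$. Setting $x_t=\Phi^{-1}(1-t)\sim\sqrt{2\log(1/t)}$, this gives
\[
\Pr(P_i<t) \;\sim\; t\,\exp\!\bigl(\mu_i x_t - \tfrac{1}{2}\mu_i^2\bigr).
\]
The pairwise ratio $\Pr(P_j<t)/\Pr(P_i<t)$ then behaves like $\exp\{(\mu_j-\mu_i)x_t\}$ up to a bounded factor, which tends to $0$ when $\mu_i>\mu_j$ and to a strictly positive constant when $\mu_i=\mu_j$. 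This identifies $I_{\mathbf{P}}=\{i:\mu_i=\max_j\mu_j\}$ and simultaneously gives tail comparability on $I_{\mathbf{P}}$.

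For the remaining half of \Cref{asmp:p-alternative}~(iii), I would take $F$ to be the standard Pareto distribution, which lies in $\mathscr{R}_{-1}^*$ and satisfies $Q_F(1-P_i)=1/P_i$. Then $\Pr(1/P_i>x)=\Pr(P_i<1/x)$, which under condition (i) is asymptotic to $c_{i,1}/x$ and under condition (ii) to $(1/x)\exp(\mu_i x_{1/x}-\mu_i^2/2)$ with $x_{1/x}\sim\sqrt{2\log x}$. In both cases this shows $Q_F(1-P_i)\in\mathscr{R}_{-1}$, giving $\beta=1$. The main technical obstacle is the normal case: one must verify that the prefactor $x\mapsto\exp(\mu_i\sqrt{2\log x})$ is slowly varying, which follows from the elementary estimate $\sqrt{2\log(\lambda x)}-\sqrt{2\log x}\to 0$ for each fixed $\lambda>0$, obtained by rationalising the difference of square roots.
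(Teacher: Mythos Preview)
Your proposal is correct and follows essentially the same route as the paper: compute $\Pr(P_i<t)=\overline{G}_i(G_i^{-1}(1-t)-\mu_i)$, use the tail-shift condition in case~(i) and the Mills ratio together with $\overline{\Phi}^{-1}(t)\sim\sqrt{2\log(1/t)}$ in case~(ii), and then verify $Q_F(1-P_i)\in\mathscr{R}_{-1}$ via the $\sqrt{2\log(\lambda x)}-\sqrt{2\log x}\to 0$ estimate. The one notable difference is that the paper checks $Q_F(1-P_1)\in\mathscr{R}_{-1}$ for an \emph{arbitrary} $F\in\mathscr{R}_{-1}$, whereas you specialise to the standard Pareto so that $Q_F(1-P_i)=1/P_i$; since \Cref{asmp:p-alternative}(iii) only asks for the existence of one such $F$, your shortcut is legitimate and slightly cleaner.
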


We now turn to the Type-B alternative and establish that it satisfies \Cref{asmp:p-alternative}. Additionally, we characterize the set of dominating signals under this alternative. The key properties follow directly from the construction of Type-B alternatives, where p-values are generated through Beta-type transformations of uniform random variables.
\begin{restatable}{prop}{AlternativeB}
\label{prop:alternative-b}
Suppose $C$ is an MRV copula and $\mathbf{U}\sim C$. The p-vector $\mathbf{P}$ satisfies
\[
    (1-P_1,\dots,1-P_n) \overset{d}{=} (1-(1-U_1)^{\beta_1},\dots,1-(1-U_n)^{\beta_n}),
\]
where $\beta_1\ge\dots\ge\beta_n\ge1$. Then the p-vector $\mathbf{P}$ meets \Cref{asmp:p-alternative}, $\beta=1/\beta_1$, and the dominating signals $I_{\mathbf{P}}=\{i\in[n]:\beta_i=\max_j\beta_j\}$.
\end{restatable}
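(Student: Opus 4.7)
The plan is to verify the three parts of Assumption~\ref{asmp:p-alternative} in turn, and then read off the dominating-signal set $I_{\mathbf P}$ and the tail index $\beta$ directly from the explicit marginal CDFs. The whole argument hinges on the observation that each marginal is of the simple form $\Pr(P_i\le t) = t^{1/\beta_i}$ for $t\in[0,1]$, since $1-U_i$ is standard uniform and $P_i = (1-U_i)^{\beta_i}$ in distribution. Nothing delicate happens in the tail, so I expect this to be a straightforward verification rather than a calculation with a hard obstacle.

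First I would dispose of (i) and (ii). Continuity of each $P_i$ is immediate from the explicit CDF $t^{1/\beta_i}$, so (i) holds. For (ii), observe that the map $u\mapsto 1-(1-u)^{\beta_i}$ is strictly increasing and continuous on $[0,1]$ (since $\beta_i\ge 1$). Applying Sklar's theorem together with the well-known invariance of copulas under strictly increasing marginal transformations, the copula of $(1-P_1,\dots,1-P_n)$ coincides with that of $(U_1,\dots,U_n)$, which is the given MRV copula $C$. Hence (ii) holds.

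Next, I would compute the tail ratios to pin down $I_{\mathbf P}$. Using $\Pr(P_i < t) = t^{1/\beta_i}$ gives
\[
\frac{\Pr(P_i < t)}{\Pr(P_j < t)} = t^{1/\beta_i - 1/\beta_j},
\]
so $P_i\succ P_j$ if and only if $\beta_i<\beta_j$. Therefore $I_{\mathbf P} = \{i\in[n]:\beta_i=\max_j\beta_j\}$, and for any distinct $i,j\in I_{\mathbf P}$, the ratio above equals $1$, establishing the first half of (iii).

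Finally, for the second half of (iii) and to identify $\beta$, I would take $F$ to be the standard Pareto CDF with $\overline F(x) = 1/x$ for $x\ge 1$, so that $F\in\mathscr{R}_{-1}$ and $Q_F(u) = 1/(1-u)$. For any $i\in I_{\mathbf P}$,
\[
\Pr\bigl(Q_F(1-P_i) > x\bigr) = \Pr(1/P_i > x) = \Pr(P_i < 1/x) = x^{-1/\beta_i}
\]
for $x\ge 1$, so $Q_F(1-P_i)\in\mathscr{R}_{-1/\beta_i}$. Since $i\in I_{\mathbf P}$ means $\beta_i=\beta_1$ and $\beta_1\ge 1$, the index $\beta = 1/\beta_1$ belongs to $(0,1]$, completing both the verification of (iii) and the identification $\beta = 1/\beta_1$. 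The proposition then follows.
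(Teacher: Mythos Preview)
Your proof is correct and follows essentially the same approach as the paper's: verify (i)--(ii) directly from the explicit construction, compute the tail ratios $t^{1/\beta_i-1/\beta_j}$ to identify $I_{\mathbf P}$, and then check that $Q_F(1-P_1)\in\mathscr{R}_{-1/\beta_1}$. The only cosmetic difference is that the paper verifies the last step for an arbitrary $F\in\mathscr{R}_{-1}$ (obtaining tail probability $\overline F(x)^{1/\beta_1}$), whereas you fix $F$ to be standard Pareto; since Assumption~\ref{asmp:p-alternative}(iii) only asks for the existence of one such $F$, your choice is sufficient and slightly more economical.
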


\section{Proofs of Theoretical Results}
\subsection{Proof of \Cref{{prop:limit-preserve-concordance-order}}}
\Limitorder*
\begin{proof}
    Based on the definition of pointwise order, we have for any $(u_1,\dots, u_n)\in [\bm 0, \bm 1]\subseteq\mathbb{R}^n$,
    $$C_1(u_1, \dots, u_n)\leq C_2(u_1,\dots, u_n).$$
    Thus, based on the definition of the limiting copula, we have 
    $$C_1^\star(u_1, \dots, u_n)=\lim_{t\to+\infty}C_1(u_1^{1/t},\dots, u_n^{1/t})^t\leq \lim_{t\to+\infty}C_2(u_1^{1/t},\dots, u_n^{1/t})^t = C_2^\star(u_1, \dots, u_n),$$
    which finishes the proof.
\end{proof}

\subsection{Proof of \Cref{prop:concordance-order-upper-and-lower-bound}}
\label{subsec:proof-concordance-order-upper-and-lower-bound}
\ConcordanceOrder*
\begin{proof}
Since the limiting copula can be written as
\[
C^*(u_1,\dots,u_n)=\exp\left\{-\ell(-\log u_1,\dots,-\log u_n)\right\},
\]
\eqref{eq:concordance-order-upper-and-lower-bound} is equivalent to
\begin{equation}
\label{eq:concordance-order-by-l}
\max(x_1,\dots,x_n)\le \ell(x_1,\dots,x_n)\le x_1+\cdots+x_n,\quad x_1,\dots,x_n>0,
\end{equation}
which is a well-known property of stable tail dependence function $\ell$; see (L3) on Page 257 of \citesupp{beirlant2006statistics}.
Therefore, Equation \eqref{eq:concordance-order-upper-and-lower-bound} in the proposition of the main text immediately follows.
\end{proof}

\subsection{Proof of \Cref{prop:copula-to-mrv-distribution}}
\label{subsec:proof-copula-to-mrv-distribution}

 \CopulaToMRV*

\begin{proof}
Since there exists a $F_0\in\mathscr{R}_{-\gamma}$ with $\gamma>0$ such that
\[
\lim_{t\to\infty} \frac{\overline{F}_i(t)}{\overline{F}_0(t) }  = c_i,~~i=1,\dots,n
\]
For any $x>0$, it holds that
\begin{align}
     \lim_{t\to\infty} \frac{\overline{F}_i(tx)}{\overline{F}_0(t) }  = c_ix^{-\gamma},~~x>0, \label{eq:250218-3}
\end{align}
and the convergence is uniform for $x\ge x_0>0$.
Also note that by the definition of the stable tail dependence function $\ell$ and the fact that $\ell$ is a (convex and thus) continuous function (page 275 of \citesupp{beirlant2006statistics}), 
it follows that  for any $x_1,\dots,x_n$ where $\min_i x_i > 0$:
\begin{equation}\label{eq:tail_ratio}
    \begin{aligned}
&\lim_{t\to+\infty}\frac{1-F_{\mathbf{X}}(tx_1,\dots,tx_n)}{1-F_{\mathbf{X}}(t,\dots,t)} 
=  \lim_{t\to\infty}\frac{1-C(F_1(tx_1),\dots,F_n(tx_n))}{1-C(F_1(t),\dots,F_n(t))}\\
&= \lim_{t\to\infty}\frac{1-C(1-\overline{F}_1(tx_1),\dots,1-\overline{F}_n(tx_n))}{1-C(1-\overline{F}_1(t),\dots,1-\overline{F}_n(t))}\\
&= \lim_{t\to\infty}\frac{1-C(1-c_1x_1^{-\gamma}\overline{F}_0(t),\dots,1-c_nx_n^{-\gamma}\overline{F}_0(t))}{1-C(1-c_1\overline{F}_0(t),\dots,1-c_n\overline{F}_0(t))}\\ 
&= \lim_{t\to\infty}\frac{1-C(1-c_1x_1^{-\gamma}\overline{F}_0(t),\dots,1-c_nx_n^{-\gamma}\overline{F}_0(t))}{ \overline{F}_0(t)}\\
&\qquad\times\lim_{t\to\infty}\frac{ \overline{F}_0(t)} {1-C(1-c_1\overline{F}_0(t),\dots,1-c_n\overline{F}_0(t))}\\[3pt]
& = \frac{\ell(c_1x_1^{-\gamma},\dots,c_nx_n ^{-\gamma})} {\ell(c_1,\dots,c_n)},
\end{aligned}
\end{equation}
 where the third equality follows from the monotonicity of copula and the uniform convergence of regularly varying  function, i.e., the convergence of (\ref{eq:250218-3}). 
 Letting $\mathbf{X}\sim F_{\mathbf{X}}$, then for $\mathbf{x}=(x_1,\cdots, x_n)\in (\mathbf{0}, \bm{\infty}) $, we have
 \begin{equation*}
\lim_{t\to\infty}\frac{\Pr(\mathbf{X}\in t[\mathbf{0},\mathbf{x}]^c)}{\Pr(\mathbf{X}\in t[\mathbf{0},\mathbf{1}]^c)}=\lim_{t\to+\infty}\frac{1-F_{\mathbf{X}}(tx_1,\dots,tx_n)}{1-F_{\mathbf{X}}(t,\dots,t)} =\frac{\ell(c_1x_1^{-\gamma},\dots,c_nx_n ^{-\gamma})} {\ell(c_1,\dots,c_n)} \overset{\Delta}{=}\lambda(\mathbf{x}).
\end{equation*}
Also, we have for any $s>0$,
$$\lambda(s\mathbf{x})=\frac{\ell(c_1s^{-\gamma}x_1^{-\gamma},\dots,c_ns^{-\gamma}x_n ^{-\gamma})} {\ell(c_1,\dots,c_n)}= \frac{s^{-\gamma}\ell(c_1x_1^{-\gamma},\dots,c_nx_n ^{-\gamma})}{\ell(c_1,\dots,c_n)}= s^{-\gamma}\lambda(\mathbf{x}).$$
 This completes the proof.
\end{proof}

\subsection{Proof of \Cref{prop:sum-tail-general-norm}}
\label{subsec:proof-sum-tail-general-norm}
\SumTailGeneralNorm*
\begin{proof}

Since $\mathbf{X}\in\mathrm{MRV}_{-\gamma}$, by \eqref{eq:nu-borel-set} in \Cref{subsec:equi-def-mrv}, there exists a Radon measure $\nu$ on $[\mathbf{0},\bm{\infty})\setminus\{\mathbf{0}\}$ such that for any Borel set $A\in[\mathbf{0},\bm{\infty}) \setminus \{\mathbf{0}\}$,
\[
\lim_{t\to\infty}\frac{\Pr\left(\mathbf X\in tA\right)}{\Pr\left(\mathbf X\in t[\mathbf 0,\mathbf 1]^c\right)}
=\nu(A).
\]
Therefore, taken in $A=\{X_i>1\}$ and $A=\{\mathbf{x}: \|x\|_1>n\}$, we have both
\[
c_i := \lim_{t\to+\infty}\frac{\Pr(X_i>t)}{{\Pr\left(\mathbf X\in t[\mathbf 0,\mathbf 1]^c\right)}}\ge0
\]
exists, and
\[
\lim_{t\to+\infty}\frac{\Pr(\sum_{i=1}^n X_i > nt)}{{\Pr\left(\mathbf X\in t[\mathbf 0,\mathbf 1]^c\right)}}
=\nu\left(\{\mathbf{x}: \|\mathbf{x}\|_1>n\}\right)
\]
exist. 
Then the target limit can be rewritten as follows:
\begin{align*}
h(\gamma,H^*) &=\lim_{t\to+\infty}\frac{\Pr(\bar X>t)}{\sum_{i=1}^n\Pr(X_i>t)}\\
&=\lim_{t\to+\infty}\frac{\Pr(\sum_{i=1}^n X_i > nt)}{{\Pr\left(\mathbf X\in t[\mathbf 0,\mathbf 1]^c\right)}}\bigg/ \lim_{t\to+\infty}\sum_{i=1}^n\frac{\Pr(X_i>t)}{{\Pr\left(\mathbf X\in t[\mathbf 0,\mathbf 1]^c\right)}}\\
&  = \nu\left(\{\mathbf{x}: \|\mathbf{x}\|_1>n\}\right) \times \left( \sum_{i=1}^nc_i \right)^{-1}\\
&=\frac{\int_{\punitsphere{n}}{\|(\mathbf{c}\odot\bm{\theta})^{1/\gamma}\|_1}^\gamma H^*(\mathrm{d}\bm{\theta})}{\sum_{i=1}^nc_i},
\end{align*}
where the last equation plugs in Equation \eqref{eq:ball} of \Cref{prop:measure-equation}.
This completes the proof.

\end{proof}

\subsection{A Common Lemma for All Theorems}
\label{subsec:moving-constant-does-not-affect-tail}
We intend to prove that moving a constant does not affect the tail properties of an MRV random vector with the following lemma. With this lemma, without loss of generality, we can assume that the lower bound $c$ of $\mathrm{supp}(F)$ is $0$ for all theorems below.

\begin{lem}
\label{lemma:mrv-robust-to-shift}
Suppose $\mathbf{X}\in\mathrm{MRV}_{-\gamma}$ with $\gamma>0$. For any fixed constant $c\in\R$, it holds that
\begin{align}
&\lim_{t\to\infty}\frac{\Pr\left(\sum_{i=1}^n (X_i+c)>t\right)}{\sum_{i=1}^n\Pr\left(X_i+c>t\right)}
=\lim_{t\to\infty}\frac{\Pr\left(\sum_{i=1}^n X_i>t\right)}{\sum_{i=1}^n\Pr\left(X_i>t\right)}
=\lim_{t\to\infty}\frac{\Pr\left(\frac{1}{n}\sum_{i=1}^n X_i>t\right)}{n^{-\gamma}\sum_{i=1}^n\Pr\left(X_i>t\right)},\\
&\lim_{t\to\infty}\frac{\Pr\left(\max_{i\in[n]} X_i+c>t\right)}{\sum_{i=1}^n\Pr\left(X_i+c>t\right)}
=\lim_{t\to\infty}\frac{\Pr\left(\max_{i\in[n]} X_i>t\right)}{\sum_{i=1}^n\Pr\left(X_i>t\right)}.
\end{align}
\end{lem}

\begin{proof}
We use $I_{\mathbf{X}}$ to denote the set of dominant tails of $\mathbf{X}$. That is,
\[
I_{\mathbf{X}}=\{i\in[n]:~c_i\neq0\},~\mathrm{where}~c_i=\lim_{t\to\infty}\frac{\Pr(X_i>t)}{\Pr(\mathbf{X}\in t[\mathbf{0},\mathbf{1}]^c)},~i=1,\dots,n.
\]
The conclusion for the maximum is straightforward:
\begin{align*}
&\lim_{t\to\infty}\frac{\Pr(\max_{i\in[n]} X_i+c>t)}{\sum_{i=1}^n\Pr(X_i+c>t)}
=\lim_{t\to\infty}\frac{\Pr(\max_{i\in[n]} X_i>t-c)}{\sum_{i=1}^n\Pr(X_i>t-c)}
=\lim_{t\to\infty}\frac{\Pr(\max_{i\in[n]} X_i>t)}{\sum_{i=1}^n\Pr(X_i>t)}.
\end{align*}
For the sum, the first equality is due to 
\begin{align*}
&\lim_{t\to\infty}\frac{\Pr(\sum_{i=1}^n (X_i+c)>t)}{\sum_{i=1}^n\Pr(X_i+c>t)}
=\lim_{t\to\infty}\frac{\Pr(\sum_{i=1}^n X_i>t - n c)}{\sum_{i=1}^n\Pr(X_i>t-c)}\\
&=\lim_{t\to\infty}\frac{\Pr( X_1+\dots+ X_n>t - n c)}{\sum_{i=1}^n\Pr( X_i>t - n c)}\times\lim_{t\to\infty}\frac{\sum_{i=1}^n\Pr(X_i>t - n c)}{\sum_{i=1}^n\Pr(X_i>t-c)}\\
&=\lim_{t\to\infty}\frac{\Pr(X_1 + \dots + X_n > t)}{\sum_{i=1}^n\Pr(X_i>t)}
\times\lim_{t\to\infty}\frac{\sum_{i\in I_{\mathbf{X}}}\Pr(X_i>t - n c)}{\sum_{i\in I_{\mathbf{X}}}\Pr(X_i>t-c)} \\
&=\lim_{t\to\infty}\frac{\Pr(X_1 + \dots + X_n > t)}{\sum_{i=1}^n\Pr(X_i>t)},
\end{align*}
where the last equality is because all $X_i$s with $i\in I_{\mathbf{X}}$ is regularly-varying tailed. 
The second equality also holds:
\begin{align*}
&\lim_{t\to\infty}\frac{\Pr(\sum_{i=1}^n X_i>t)}{\sum_{i=1}^n\Pr(X_i>t)}
=\lim_{t\to\infty}\frac{\Pr(\sum_{i=1}^n X_i>t )}{\sum_{i\in I_{\mathbf{X}}}\Pr(X_i>t)}\\
&=\lim_{t\to\infty}\frac{\Pr(\frac{1}{n}\sum_{i=1}^n X_i>t)}{\sum_{i\in I_{\mathbf{X}}}\Pr( X_i>t)}\times\lim_{t\to\infty}\frac{\sum_{i\in I_{\mathbf{X}}}\Pr(X_i>t)}{\sum_{i\in I_{\mathbf{X}}}\Pr(X_i>nt)}\\
&=\lim_{t\to\infty}\frac{\Pr(\frac{1}{n}\sum_{i=1}^n X_i > t)}{n^{-\gamma}\sum_{i\in I_{\mathbf{X}}}\Pr(X_i>t)}
=\lim_{t\to\infty}\frac{\Pr(\frac{1}{n}\sum_{i=1}^n X_i > t)}{n^{-\gamma}\sum_{i=1}^n\Pr(X_i>t)},
\end{align*}
where the first, third, and last equalities all come from $X_i\in\mathcal{R}_{-\gamma}$ for all $i\in I_{\mathbf{X}}$.
Then the lemma follows.
\end{proof}

\subsection{Proof of \Cref{thm:combination-test-valid}}
\label{subsec:proof-heavy-tailed-test-validity}
\AsymptoticValidity*
\begin{proof}
Denote by $C$ the copula of  $\mathbf{1}-\mathbf{P}$.
Since under the global null, each $P_i$ has a standard uniform distribution, and thus,
\begin{align*}
&\lim_{\alpha\downarrow0}\frac{\sum_{i=1}^n\Pr_0\left(P_i<\omega_i\overline{F}\left(n Q_F(1-\frac{\alpha}{n^{1-\gamma}})\right)\right)}{\alpha}
=\lim_{\alpha\downarrow0}\frac{\sum_{i=1}^n \omega_i\overline{F}\left(n Q_F(1-\frac{\alpha}{n^{1-\gamma}})\right)}{\alpha}\\
&=\lim_{\alpha\downarrow0}\frac{\sum_{i=1}^n \omega_i n^{-\gamma}\overline{F}\left(Q_F(1-\frac{\alpha}{n^{1-\gamma}})\right)}{\alpha}
=\lim_{\alpha\downarrow0}\frac{\sum_{i=1}^n \omega_i\frac{\alpha}{n}}{\alpha}=1.
\end{align*}
Hence, for any $\gamma>0$,
\begin{equation}
\label{eq:rewrite-q}
\begin{split}
q(\gamma)&=\lim_{\alpha\downarrow0}\frac{\Pr_0\left(P_\mathrm{comb}^{F, \bm \omega}\leq \alpha\right)}{\alpha}\\
&=\lim_{\alpha\downarrow0}\frac{\Pr_0\left(\frac{1}{n}\sum_{i=1}^n Q_F\left((1 - P_i/\omega_i)^+\right)> Q_F(1-\frac{\alpha}{n^{1-\gamma}})\right)}{\sum_{i=1}^n\Pr_0\left(Q_F\left((1 - P_i/\omega_i)^+\right)>n Q_F(1-\frac{\alpha}{n^{1-\gamma}})\right)}\\
&=\lim_{t\to\infty}\frac{\Pr_0\left(\sum_{i=1}^n Q_F\left((1 - P_i/\omega_i)^+\right)>t\right)}{\sum_{i=1}^n\Pr_0\left(Q_F\left((1 - P_i/\omega_i)^+\right)>t\right)}.  
\end{split}
\end{equation}
By \Cref{lemma:mrv-robust-to-shift}, without loss of generality, we assume the lower bound of $F$'s support is 0.
By definition, 
\begin{align*}
&\Pr_0\left({X}_1\le x_1,\dots,{X}_n\le x_n\right)\\
&=\Pr_0\left(Q_{{F}}\left((1-P_1/\omega_1)^+\right)\le x_1,\dots,Q_{{F}}\left((1-P_n/\omega_n)^+\right)\le x_n\right)\\
&=\Pr_0\left((1-P_1/\omega_1)^+ \le {F}(x_1),\dots,(1-P_n/\omega_n)^+\le{F}(x_n)\right)\\
&=\Pr_0\left(P_1/\omega_1 \ge 1-{F}(x_1),\dots,P_n/\omega_n \ge 1-{F}(x_n)\right)\\
&=\Pr_0\left(1-P_1 \le 1-\omega_1\left(1-{F}(x_1)\right),\dots,1-P_n \le 1-\omega_n\left(1-{F}(x_n)\right)\right)\\
&=C\left(1-\omega_1\left(1-{F}(x_1)\right),\dots,1-\omega_n\left(1-{F}(x_n)\right)\right).
\end{align*}
Thus, the random vector ${\mathbf{X}}=\left({X}_1,\dots,{X}_n\right)$ has the MRV copula $C$ and each marginal satisfies
\begin{equation}
\label{eq:validity-define-ci}
c_i=\lim_{t\to+\infty}\frac{\omega_i \left(1-{F}(t)\right)}{1-{F}(t)}=\omega_i>0,
\end{equation}
where ${F}\in\mathscr{R}_{-\gamma}^*$ with the support $[\mathbf{0},\mathbf{\infty})$. Then 
following \Cref{prop:copula-to-mrv-distribution}, the random vector ${\mathbf{X}}\in\mathrm{MRV}_{-\gamma}$ and all its marginals belong  to $\mathscr{R}_{-\gamma}$. Following \eqref{eq:rewrite-q},
\begin{equation*}
\label{eq:change_x_to_tilde_x_validity}
\begin{split}
&q(\gamma)
=\lim_{t\to\infty}\frac{\Pr_0(\sum_{i=1}^n X_i>t)}{\sum_{i=1}^n\Pr_0( X_i>t)}
=\lim_{t\to\infty}\frac{\Pr_0(\frac{1}{n}\sum_{i=1}^n  X_i>t)}{n^{-\gamma}\sum_{i=1}^n\Pr_0( X_i>t)},
\end{split}
\end{equation*}
where the last equality is because each $ X_i\in\mathscr{R}_{-\gamma}$.
According to \Cref{prop:sum-tail-general-norm},
\begin{equation}
\label{eq:error-limit}
q(\gamma)
=n^{\gamma}\lim_{t\to\infty}\frac{\Pr_0(\frac{1}{n}\sum_{i=1}^n  X_i>t)}{\sum_{i=1}^n\Pr_0( X_i>t)}
=\frac{1}{n}\int_{\punitsphereone{n}}\left(\sum_{i=1}^n(\omega_i\theta_i)^{1/\gamma}\right)^\gamma H^*(\mathrm{d}\bm{\theta}),   
\end{equation}
where $H^*$ is the spectral measure of $C$ and the last equality plugs in $\sum_{i=1}^n{\omega}_i=n$.
Since all $\omega_i>0$, by \citesupp[Theorem 19]{hardy1952inequalities}, $\|(\bm{\omega}\odot\bm{\theta})^{1/\gamma}\|_1^\gamma$ is non-decreasing in $ \gamma >0$. Therefore,
$q(\gamma)$ is non-decreasing in $\gamma$. 
Based on the definition of the stable tail dependence function, for any $i\in[n]$,
\[
\int_{\punitsphereone{n}}\theta_iH^*(\intd\bm{\theta})=\ell(\mathbf{e}_i)=\lim_{s\downarrow0}s^{-1}D(s\mathbf{e}_i)=1
\]
Accordingly,
\begin{align*} 
\frac{1}{n}\int_{\punitsphereone{n}}\sum_{i=1}^n \omega_i \theta_i H^*(\intd\bm{\theta}) = \frac{\sum_{i=1}^n\omega_i}{n}=1.
\end{align*}
Therefore, when $\gamma \ge 1$,
\begin{align*} 
q(\gamma)
=\frac{1}{n}\int_{\punitsphereone{n}}\left(\sum_{i=1}^n(\omega_i\theta_i)^{1/\gamma}\right)^\gamma H^*(\mathrm{d}\bm{\theta})
\ge \frac{1}{n}\int_{\punitsphereone{n}}\sum_{i=1}^n \omega_i \theta_i H^*(\intd\bm{\theta})
= 1 ,
\end{align*}
and for $\gamma\le 1$,
\begin{align*} 
q(\gamma)
=\frac{1}{n}\int_{\punitsphereone{n}}\left(\sum_{i=1}^n(\omega_i\theta_i)^{1/\gamma}\right)^\gamma H^*(\mathrm{d}\bm{\theta})
\le \frac{1}{n}\int_{\punitsphereone{n}}\sum_{i=1}^n \omega_i \theta_i H^*(\intd\bm{\theta})
= 1.
\end{align*}
Since all \(\omega_i>0\), strict inequality holds if and only if
\(H^*\) assigns positive mass to
$\{\bm{\theta}\in\punitsphereone{n}: \theta_i \theta_j>0  \mbox{~for some~} i\neq j\}$.
That is, $H^*$ does not concentrate on $\mathbf{e}_i$s, or equivalently, the p-vector $\mathbf{1}-\mathbf{P}$ is not asymptotically independent.
\end{proof}

\subsection{Proof of \Cref{cor:CCT-validity}}

\ValidCauchy*

\begin{proof}
We use $F_{\rm C}$ and $F_{\rm tC}$ to denote CDFs of standard Cauchy and truncated Cauchy distributions, as defined in \Cref{exmp:truncated_cauchy}. Notice that
\[
F_{\rm tC}(x) = \frac{F_{\rm C}(x)-q}{1-q}.
\]
Therefore, their survival functions and quantile functions satisfy
\begin{align*}
\overline F_{\rm tC}(x) &= \overline F_{\rm C}(x)/(1-q),\\
Q_{F_{\rm tC}}(1-p) &= Q_{F_{\rm C}}(1-(1-q) p),\quad \text{for~all}~p\in (0,1).
\end{align*}
Now, we note that
\[
Q_{F_{\rm C}}(1- p)< \theta Q_{F_{\rm C}}(1-\theta p),
\]
for all $p\in (0,1)$ and $\theta\in (0,1)$. This follows from $Q_{F_{\rm C}}(1-t)=\cot(\pi t)$ for $t\in (0,1)$ and that $t\mapsto t \cot(t)$ is strictly decreasing.

For simplicity, we use $P_{\rm tC}$ to denote p-values of the heavy-tailed combination test using truncated Cauchy. Writing $\theta =1-q$, we have
\begin{align*}
P_{\rm comb}^{\rm CCT}=  \overline F_{\rm C}\left(\frac1 n\sum_{i=1}^ n Q_{F_{\rm C}}(1-P_i) \right)&\ge 
 \overline F_{\rm C}\left(\frac1 n\sum_{i=1}^ n \theta Q_{F_{\rm C}}(1-\theta P_i) \right)
 \\&=
\overline F_{\rm C}\left(\frac1 n\sum_{i=1}^ n \theta Q_{F_{\rm tC}} (1- P_i  ) \right) \\
&= \overline {F}_{\rm C}(\theta {\overline{F}_{\rm tC}^{-1}}(P_{\rm tC}))
=\theta  \overline {F}_{\rm tC}(\theta {\overline{F}_{\rm tC}^{-1}}(P_{\rm tC})).
\end{align*}
The asymptotic validity of $P_{\rm comb}^{\rm CCT}$ follows immediately from that of $P_{\rm tC}$ in \Cref{thm:combination-test-valid}, and the fact that $ {F}_{\rm tC}$ is in $\mathscr{R}_{-1}^*$, which gives $\theta \overline F_{\rm tC}(\theta x)/\overline F_{\rm tC}(x)\to1$ as $x\to \infty$.
\end{proof}

\subsection{Proof of \Cref{thm:error-complete-dependence}}
\label{subsec:error-complete-dependence}
\ErrorCompleteDependence*
\begin{proof}
Following the proof of \Cref{thm:combination-test-valid}, the asymptotic type-I error ratio is
\[
q(\gamma)
=\frac{1}{n}\int_{\punitsphereone{n}}\left(\sum_{i=1}^n(\omega_i\theta_i)^{1/\gamma}\right)^\gamma H^*(\mathrm{d}\bm{\theta}).
\]
Since when $0<\gamma<1$, $\psi(\bm\theta)=\left(\sum_{i=1}^n(\omega_i\theta_i)^{1/\gamma}\right)^\gamma$ is a convex function and $H^*/n$ is a probability measure on $\punitsphereone{n}$, by Jensen's inequality,
\begin{align*}
&q(\gamma)
=\int_{\punitsphereone{n}}\left(\sum_{i=1}^n(\omega_i\theta_i)^{1/\gamma}\right)^\gamma \frac{H^*(\mathrm{d}\bm{\theta})}{n}\\
\ge& \psi\left(\int_{\punitsphereone{n}}\theta_1 \frac{H^*(\mathrm{d}\bm{\theta})}{n},\dots,\int_{\punitsphereone{n}}\theta_n \frac{H^*(\mathrm{d}\bm{\theta})}{n}\right)\\
=&\psi\left(\frac{1}{n},\dots,\frac{1}{n}\right)
=\left(\sum_{i=1}^n(\omega_i/n)^{1/\gamma}\right)^\gamma
=\frac{1}{n}\left(\sum_{i=1}^n \omega_i^{1/\gamma}\right)^\gamma.
\end{align*}
The equality holds if and only if $H^*/n$ is a delta measure at $(\frac{1}{n},\dots,\frac{1}{n})$. That is, $H^*$ is the spectral measure of an asymptotic completely dependent MRV copula. Moreover, since $\psi(\bm\theta)$ is a convex function, for any $\bm\theta\in\punitsphereone{n}$,
\[
\psi(\bm\theta)
\le\sum_{i=1}^n\theta_i\psi(\mathbf{e}_i)=\sum_{i=1}^n\theta_i\omega_i.
\]
Accordingly,
\begin{align*}
&q(\gamma)
=\int_{\punitsphereone{n}}\left(\sum_{i=1}^n(\omega_i\theta_i)^{1/\gamma}\right)^\gamma \frac{H^*(\mathrm{d}\bm{\theta})}{n}
\le \int_{\punitsphereone{n}}\sum_{i=1}^n\theta_i\omega_i \frac{H^*(\mathrm{d}\bm{\theta})}{n}\\
&=\frac{1}{n}\sum_{i=1}^n\omega_i\int_{\punitsphereone{n}}\theta_i H^*(\mathrm{d}\bm{\theta})
=\frac{1}{n}\sum_{i=1}^n\omega_i
=1.
\end{align*}
The equality holds if and only if $H^*$ only has point mass on those points satisfying $\psi(\bm\theta)=\sum_{i=1}^n\theta_i\omega_i$, i.e., $\bm\theta=\mathbf{e}_i$. In other words, $H^*$ is the spectral measure of those asymptotic independent MRV copulas.

When $\gamma>1$, $-\psi(\bm\theta)$ is a convex function. By similar deduction,
\[
1\le q(\gamma)\le\frac{1}{n}\left(\sum_{i=1}^n \omega_i^{1/\gamma}\right)^\gamma,
\]
and the upper bound and lower bound are attained if and only if the MRV copula characterizes the asymptotic complete dependence and asymptotic independence, respectively.
\end{proof}

\subsection{Proof of \Cref{thm:power-increasing}}
\label{subsec:proof-power-increasing}
\begin{lem}
\label{lemma:assmp-1-iii-ensure-rv}
Suppose a random variable $P_0$ with continuous distribution satisfies that there exists a continuous $F\in\mathscr{R}_{-1}$ such that $Q_F(1-P_0)\in\mathscr{R}_{-\beta}$ where $\beta\le1$. Then, the cumulative distribution function of $P_0$ has the form $t^\beta h(t)$, where $h$ is a slowly varying function at $0$: for all $y>0$
\begin{align} \label{eq:slowvary}
  \lim_{t\downarrow0}\frac{h(ty)}{h(t)}=1.  
\end{align}
Accordingly, for any $\tilde{F}\in\mathscr{R}_{-\gamma}$, any positive weight $\omega>0$, and any random variable $P$ with continuous distribution such that
\[
\lim_{t\downarrow0}\frac{\Pr(P\le t)}{\Pr(P_0\le t)}\in(0,\infty),
\]
it holds that $Q_{\tilde{F}}((1-P/\omega)^+)\in\mathscr{R}_{-\beta\gamma}$.
\end{lem}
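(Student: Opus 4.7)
The plan is to prove both conclusions by first establishing that $G(t) := \Pr(P_0 \le t)$ is regularly varying at $0$ with index $\beta$, and then transporting this regularity to $G_P(t) := \Pr(P \le t)$ and finally to the survival function of $Q_{\tilde F}((1-P/\omega)^+)$. Because $P_0$ is continuous, the equivalence $Q_F(1-P_0) > x \iff P_0 < \overline F(x)$ gives
$$\Pr\bigl(Q_F(1-P_0) > x\bigr) = G(\overline F(x)).$$
The hypothesis $Q_F(1-P_0) \in \mathscr{R}_{-\beta}$ then forces $G(\overline F(x)) = x^{-\beta} L_1(x)$ with $L_1$ slowly varying at $\infty$.

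The key technical step is the change of variable $t = \overline F(x)$, i.e., $x = \overline F^{-1}(t)$, where $\overline F^{-1}$ is the generalized inverse. Since $F \in \mathscr{R}_{-1}$, the inversion theorem for regularly varying functions (Bingham--Goldie--Teugels, Theorem 1.5.12) shows that $\overline F^{-1}$ is regularly varying at $0$ with index $-1$, so we may write $\overline F^{-1}(t) = t^{-1} L_2(t)$ for some $L_2$ slowly varying at $0$. Substituting gives
$$G(t) = \bigl(\overline F^{-1}(t)\bigr)^{-\beta}\, L_1\bigl(\overline F^{-1}(t)\bigr) = t^{\beta}\, L_2(t)^{-\beta}\, L_1\bigl(\overline F^{-1}(t)\bigr).$$
A standard composition fact (a slowly varying function at $\infty$ composed with a regularly varying function at $0$ yields a slowly varying function at $0$) shows that $h(t) := L_2(t)^{-\beta}\, L_1(\overline F^{-1}(t))$ satisfies the slow-variation property \eqref{eq:slowvary}, which proves part (1). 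Verifying this fact is routine: for any $y>0$, the ratio $\overline F^{-1}(ty)/\overline F^{-1}(t) \to y^{-1}$, and slow variation of $L_1$ at $\infty$ kills any bounded positive multiplicative factor in its argument.

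For the second conclusion, the hypothesis $\lim_{t \downarrow 0} G_P(t)/G(t) =: c \in (0,\infty)$ together with part (1) immediately gives $G_P(t) = c\, t^{\beta} h(t)(1+o(1))$, so $G_P$ is regularly varying at $0$ with index $\beta$. For $x$ large enough that $\overline{\tilde F}(x)$ is small, the event $Q_{\tilde F}((1-P/\omega)^+) > x$ is equivalent to $P < \omega\, \overline{\tilde F}(x)$, hence
$$\Pr\bigl(Q_{\tilde F}((1-P/\omega)^+) > x\bigr) = G_P\bigl(\omega\, \overline{\tilde F}(x)\bigr).$$
Fix $y > 0$ and set $s_x := \overline{\tilde F}(xy)/\overline{\tilde F}(x)$, which tends to $y^{-\gamma}$ by $\tilde F \in \mathscr{R}_{-\gamma}$. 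Writing
$$\frac{G_P\bigl(\omega\, \overline{\tilde F}(xy)\bigr)}{G_P\bigl(\omega\, \overline{\tilde F}(x)\bigr)} = \frac{G_P\bigl(\omega\, \overline{\tilde F}(x)\, s_x\bigr)}{G_P\bigl(\omega\, \overline{\tilde F}(x)\bigr)},$$
Karamata's uniform convergence theorem (the ratio $G_P(ts)/G_P(t) \to s^\beta$ uniformly in $s$ on compact subsets of $(0,\infty)$ as $t \downarrow 0$) yields a limit of $(y^{-\gamma})^{\beta} = y^{-\beta\gamma}$, establishing $Q_{\tilde F}((1-P/\omega)^+) \in \mathscr{R}_{-\beta\gamma}$.

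The main obstacle is not the algebra but the bookkeeping: the argument interleaves regular variation at $\infty$ (the tails of $F$, $\tilde F$, and the transformed statistics) with regular variation at $0$ (the CDFs of $P_0$ and $P$ near the origin), and the indices flip sign under inversion and under passage between survival and quantile functions. The one place where more than routine care is needed is the substitution of the varying argument $s_x$ into the ratio $G_P(ts)/G_P(t)$; pointwise convergence $s_x \to y^{-\gamma}$ does not suffice on its own, and the appeal to the uniform convergence theorem (or equivalently Potter bounds) is essential.
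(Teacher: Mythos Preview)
Your proof is correct. The main difference from the paper's argument lies in part (1): the paper bypasses the inversion theorem entirely by noting that $F\in\mathscr{R}_{-1}$ gives $\overline F(xy)\sim y^{-1}\overline F(x)$, so that one may replace $\overline F(xy)$ by $y^{-1}\overline F(x)$ inside $D_0$ (justified by continuity/monotonicity of $D_0$) and then substitute $t=\overline F(x)$ directly, arriving at $\lim_{t\downarrow 0}D_0(y^{-1}t)/D_0(t)=y^{-\beta}$ in essentially one line. Your route---writing $G(\overline F(x))=x^{-\beta}L_1(x)$, inverting via BGT~1.5.12 to get $\overline F^{-1}(t)=t^{-1}L_2(t)$, and then composing---is heavier on machinery but more explicitly airtight: the paper's replacement step (``by continuity of $D_0$'') tacitly requires exactly the kind of uniform-convergence argument you flag at the end, whereas your inversion-and-composition approach makes every regular-variation claim standard. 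For part (2) the two proofs are essentially identical, both reducing to the uniform convergence theorem applied to the now-established regular variation of $G_P$ at $0$.
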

\begin{proof}
Let $D_0$ denote the cumulative distribution function of $P_0$. The survival function of $Q_F(1-P_0)$ is
\begin{align*}
\Pr(Q_F(1-P_0)>x)= \Pr(P_0<\overline{F}(x))=\Pr(P_0\le\overline{F}(x))=D_0\left(\overline{F}(x)\right), 
\end{align*}
where the second to last equality is because $P_0$ has a continuous distribution.
Since $Q_F(1-P_0)\in\mathscr{R}_{-\beta}$, for all $y>0$
\begin{align*}
&y^{-\beta}
=\lim_{x\to\infty}\frac{\Pr\left(Q_F(1-P_0)>xy\right)}{\Pr\left(Q_F(1-P_0)>x\right)}
=\lim_{x\to\infty}\frac{D_0\left(\overline{F}(xy)\right)}{D_0\left(\overline{F}(x)\right)}\\
=&\lim_{x\to\infty}\frac{D_0\left(y^{-1}\overline{F}(x)\right)}{D_0\left(\overline{F}(x)\right)}
=\lim_{t\downarrow0}\frac{D_0\left(y^{-1}t\right)}{D_0\left(t\right)},  
\end{align*}
where the third equality is due to continuity of $D_0$ and $F\in \mathscr{R}_{-1}$ and the last equality is due to the continuity of $F$. In other words, for any $y>0$,
\begin{equation}
\label{eq:construct-h}
\lim_{t\downarrow0}\frac{D_0\left(yt\right)}{D_0\left(t\right)}=y^{\beta}.
\end{equation}
Let $h(t)=D_0(t)/t^\beta$. \eqref{eq:construct-h} is equivalent to  \eqref{eq:slowvary}.
Let $D$ represent the cumulative distribution function of $P$. Then, the tail probability of $Q_{\tilde F}\left((1-P/\omega)^+\right)$ is
\begin{align*}
&\Pr(Q_{\tilde F}\left((1-P/\omega)^+\right)>x)
= \Pr(P < \omega\{1-\tilde{F}(x)\})\\
=&\Pr(P \le \omega\{1-\tilde{F}(x)\})
=D\left(\omega\{1-\tilde F(x)\}\right).
\end{align*}
For all $y>0$,
\begin{align*}
\lim_{x\to\infty}\frac{D\left(\omega\{1-\tilde F(xy)\}\right)}{D\left(\omega\{1-\tilde F(x)\}\right)}
=&\lim_{x\to\infty}\frac{D\left(\omega\{1-\tilde F(xy)\}\right)/D_0\left(\omega\{1-\tilde F(xy)\}\right)}{D\left(\omega\{1-\tilde F(x)\}\right)/D_0\left(\omega\{1-\tilde F(x)\}\right)}\\
&\quad\times\lim_{x\to\infty}\frac{D_0\left(\omega\{1-\tilde F(xy)\}\right)}{D_0\left(\omega\{1-\tilde F(x)\}\right)}\\
=&\lim_{x\to\infty}\frac{D_0\left(\omega y^{-\gamma}\{1-\tilde F(x)\}\right)}{D_0\left(\omega\{1-\tilde F(x)\}\right)}
=\lim_{t\downarrow0}\frac{D_0\left(y^{-\gamma}t\right)}{D_0\left(t\right)}\\
=&\lim_{t\downarrow0}\frac{y^{-\beta\gamma} t^{\beta} h\left(y^{-\gamma}t\right)}{t^\beta h\left(t\right)}
=y^{-\beta\gamma},
\end{align*}
where the second equality is because $D_0$ is continuous. By the definition of a regularly varying tailed distribution, $Q_{\tilde F}\left((1-P/\omega)^+\right)\in\mathscr{R}_{-\beta\gamma}$. This completes the proof.
\end{proof}

\PowerIncrease*
\begin{proof}
By \Cref{lemma:mrv-robust-to-shift}, without loss of generality, the lower bound of $F$ can be assumed as 0.
According to \Cref{prop:copula-to-mrv-distribution}, a sufficient condition to ensure that ${\mathbf{X}}$ has an MRV distribution is that $\mathbf{P}=(P_1,\dots,P_n)$ has a survival MRV copula and the dominant marginal has a regularly varying tailed distribution. \Cref{asmp:p-alternative} (ii) directly provides the former one. 
Without loss of generality, we suppose $1\in I_{\mathbf{P}}$ satisfying \Cref{asmp:p-alternative} (iii) and let
\[
b_i=\lim_{t\downarrow0}\frac{\Pr(P_i<t)}{\Pr(P_1<t)}\in(0,\infty).
\]
According to \Cref{lemma:assmp-1-iii-ensure-rv}, the cumulative distribution function of $P_1$ is $t^\beta h(t)$ where $h(t)$ is slowly varying at $0$.
Then for any $i\in I_{\mathbf{P}}$,
\begin{equation}
\label{eq:define-ci-Ip-alternative}
\begin{split}
&c_i:=\lim_{x\to\infty}\frac{\Pr(Q_F\left((1-P_i/\omega_i)^+\right)>x)}{\Pr(Q_F\left((1-P_1/\omega_1)^+\right)>x)}
=\lim_{t\downarrow0}\frac{\Pr(P_i<\omega_i t)}{\Pr(P_1<\omega_1 t)}\\
&=\left(\frac{\omega_i}{\omega_1}\right)^\beta\times\lim_{t\downarrow0}\frac{\Pr(P_i<\omega_i t)}{\omega_i^\beta t^\beta h(\omega_i t)}\times\lim_{t\downarrow0}\frac{h(\omega_i t)}{h(\omega_1 t)}
\\
&=\left(\frac{\omega_i}{\omega_1}\right)^\beta\times\lim_{t\downarrow0}\frac{\Pr(P_i<\omega_i t)}{\Pr(P_1<\omega_i t)}
=b_i\left(\frac{\omega_i}{\omega_1}\right)^\beta.    
\end{split}
\end{equation}
For any $i\notin I_{\mathbf{P}}$,
\begin{equation}
\label{eq:define-ci-not-Ip-alternative}
\begin{split}
&c_i:=\lim_{x\to\infty}\frac{\Pr(Q_F\left((1-P_i/\omega_i)^+\right)>x)}{\Pr(Q_F\left((1-P_1/\omega_1)^+\right)>x)}
=\lim_{t\downarrow0}\frac{\Pr(P_i<\omega_i t)}{\Pr(P_1<\omega_1 t)}\\
&=\left(\frac{\omega_i}{\omega_1}\right)^\beta\times\lim_{t\downarrow0}\frac{\Pr(P_i<\omega_i t)}{\omega_i^\beta t^\beta h(\omega_i t)}\times\lim_{t\downarrow0}\frac{h(\omega_i t)}{h(\omega_1 t)}
\\
&=\left(\frac{\omega_i}{\omega_1}\right)^\beta\times\lim_{t\downarrow0}\frac{\Pr(P_i<\omega_i t)}{\Pr(P_1<\omega_i t)}
=0,
\end{split}
\end{equation}
where the last equality is based on the definition of $I_{\mathbf{P}}$.
According to \Cref{lemma:assmp-1-iii-ensure-rv}, it also holds that $Q_{F}((1-P_1/\omega_1)^+)\in\mathscr{R}_{-\gamma\beta}$. That is, the dominant tail has a regularly varying tailed distribution and hence ${\mathbf{X}}\in\mathrm{MRV}_{-\gamma\beta}$ on $\Xi$ with $c_i$s defined in \eqref{eq:define-ci-Ip-alternative} and \eqref{eq:define-ci-not-Ip-alternative}.

Let $X_i = Q_{F}((1-P_i/\omega_i)^+)$ and $\bar{X}=\sum_{i=1}^n X_i/n$. Then the power of the combination test can be rewritten as:
\begin{equation}
\label{eq:rewrite-power}
\begin{split}
&\Pr\left(P^{F,\bm{\omega}}_{\mathrm{comb}}\le\alpha\right)
=\Pr\left(\bar{X}>Q_F\left(1-{\alpha}/{n^{1-\gamma}}\right)\right)\\
&=\frac{\Pr\left(\bar{X}>Q_F\left(1-\frac{\alpha}{n^{1-\gamma}}\right)\right)}{\sum_{i=1}^n\Pr\left( X_i> n Q_F\left(1-\frac{\alpha}{n^{1-\gamma}}\right)\right)}\times\sum_{i=1}^n\Pr\left( X_i> n Q_F\left(1-\frac{\alpha}{n^{1-\gamma}}\right)\right)\\
&=\frac{\Pr\left(\bar{X}> Q_F\left(1-\frac{\alpha}{n^{1-\gamma}}\right)\right)}{\sum_{i=1}^n\Pr\left( X_i> n Q_F\left(1-\frac{\alpha}{n^{1-\gamma}}\right)\right)}\times\sum_{i=1}^n\Pr\left( P_i<\omega_i\overline{F}\left(nQ_F\left(1-\frac{\alpha}{n^{1-\gamma}}\right)\right)\right).
\end{split}
\end{equation}
Since every $P_i$ has a continuous distribution and 
\[
\lim_{\alpha\downarrow0}\frac{\overline{F}\left(nQ_F\left(1-\frac{\alpha}{n^{1-\gamma}}\right)\right)}{\alpha/n}
=\lim_{\alpha\downarrow0}\frac{n^{-\gamma}\overline{F}\left(Q_F\left(1-\frac{\alpha}{n^{1-\gamma}}\right)\right)}{\alpha/n}
=\lim_{\alpha\downarrow0}\frac{n^{-\gamma}\frac{\alpha}{n^{1-\gamma}}}{\alpha/n}
=1,
\]
we get
\begin{equation}
\label{eq:power-summation-individuals}
\lim_{\alpha\downarrow0}\frac{\sum_{i=1}^n\Pr\left( P_i<\omega_i\overline{F}\left(\frac{1}{a_n}Q_{F}\left(1-\frac{\alpha}{n a_n^\gamma}\right)\right)\right)}{\sum_{i=1}^n\Pr\left( P_i<\frac{\omega_i\alpha}{n}\right)}=1.
\end{equation}
Then the limiting scaled power is
\begin{align*}
&\tilde{q}(\gamma)
=\lim_{\alpha\downarrow0}\frac{\Pr\left(P^{F,\bm{\omega}}_{\mathrm{comb}}\le\alpha\right)}{\sum_{i=1}^n\Pr\left( P_i<\frac{\omega_i\alpha}{n}\right)}
=\lim_{\alpha\downarrow0}\frac{\Pr\left(\frac{1}{n}\sum_{i=1}^n X_i>Q_F\left(1-\frac{\alpha}{n^{1-\gamma}}\right)\right)}{\sum_{i=1}^n\Pr\left( X_i> n Q_F\left(1-\frac{\alpha}{n^{1-\gamma}}\right)\right)}\\
&=\lim_{x\to\infty}\frac{\Pr(\sum_{i=1}^n X_i>x)}{\sum_{i=1}^n\Pr(X_i>x)}=\lim_{x\to\infty}\frac{\Pr(\bar{X}>x)}{n^{-\gamma\beta}\sum_{i=1}^n\Pr(X_i>x)}\\
&=\int_{\punitsphereone{n}}\left(\sum_{i=1}^n(c_i\theta_i)^{1/(\gamma\beta)} \right)^{\gamma\beta}H^*(\intd\bm{\theta}),
\end{align*}
where the second-to-last equation is by \Cref{lemma:mrv-robust-to-shift}. Since  $\bigl(\sum_{i=1}^n (c_i\theta_i)^{1/(\gamma\beta)}\bigr)^{\gamma\beta}$ is non-decreasing with $\gamma$ (See \citesupp{hardy1952inequalities}) as $\beta>0$, $\tilde{q}(\gamma)$ is non-decreasing in $\gamma$.
Moreover, $\tilde{q}(\gamma)$ is a constant if and only if $H^*$ is only distributed on axes or when $|I_{\mathbf{P}}|=1$. In other words, $\tilde{q}(\gamma)$ is increasing if and only if $|I_{\mathbf{P}}|=S>1$ and $(1-P_{i_1},\dots,1-P_{i_S})$ is asymptotically dependent where $I_{\mathbf{P}}=\{i_1,\dots,i_S\}$. 
\end{proof}

\subsection{Proof of \Cref{thm:bonferroni-more-conservative}}
\label{subsec:proof-bonferroni-conservative}
\BonferroniConservative*
\begin{proof}
We follow the proof of \Cref{thm:power-increasing} assuming the support of $F$ is lower bounded by $c=0$ and get that the random vector
\begin{equation*}
{\mathbf{X}} 
= \left( Q_{F}\left((1-P_1/\omega_1)^+\right),\dots, Q_{F}((1-P_n/\omega_n)^+)\right)
\end{equation*}
belongs to $\mathrm{MRV}_{-\beta\gamma}$ for some $\beta\le1$ on $\Xi$ with its $c_i$s shown in \eqref{eq:define-ci-Ip-alternative} and \eqref{eq:define-ci-not-Ip-alternative}. In addition, $P_1$'s CDF is $t^\beta h(t)$, where $h$ is a slowly varying function at 0. %

The limit in \Cref{thm:bonferroni-more-conservative} can be rewritten as:
\begin{equation}
\label{eq:change-test-to-x-tilde}
\begin{split}
&\lim_{\alpha\downarrow0}\frac{\Pr\left(P_\mathrm{comb}^{F, \bm \omega}\le\alpha\right)}{\Pr\left(P_\mathrm{bon}^{\bm{\omega}}\le\alpha\right)}
=\lim_{\alpha\downarrow0}\frac{\Pr\left(\frac{1}{n}\sum_{i=1}^n X_i > Q_F\left(1-\alpha/n^{1-\gamma}\right)\right)}{\Pr(\max_{i\in[n]}X_i>Q_F(1-\alpha/n))}\\
&=\lim_{\alpha\downarrow0}\frac{\Pr\left(\frac{1}{n}\sum_{i=1}^n X_i > Q_F\left(1-\alpha/n^{1-\gamma}\right)\right)}{\sum_{i=1}^n\Pr(X_i>Q_F(1-\alpha/n^{1-\gamma}))}\bigg/\lim_{\alpha\downarrow0}\frac{\Pr(\max_{i\in[n]}X_i> Q_F(1-{\alpha}/{n}))}{\Pr\left(\sum_{i=1}^n X_i > Q_F(1-{\alpha}/{n})\right)}\\
&\qquad\times \lim_{\alpha\downarrow0}\frac{\sum_{i=1}^n\Pr(X_i> Q_F(1-{\alpha}/{n^{1-\gamma}}))}{\sum_{i=1}^n\Pr(X_i>Q_F(1-{\alpha}/{n}))}\\
&=n^{\gamma\beta}\lim_{x\to\infty}\frac{\Pr(\frac{1}{n}\sum_{i=1}^n X_i>x)}{\sum_{i=1}^n\Pr(X_i>x)}\bigg/\lim_{x\to\infty}\frac{\Pr(\max_{i\in[n]}X_i>x)}{\sum_{i=1}^n\Pr(X_i>x)},
\end{split}
\end{equation}
where the third equality plugs in the following equation
\begin{align*}
&\lim_{\alpha\downarrow0}\frac{\sum_{i=1}^n\Pr(X_i> Q_F(1-{\alpha}/{n^{1-\gamma}}))}{\sum_{i=1}^n\Pr(X_i>Q_F(1-{\alpha}/{n}))}
=\lim_{\alpha\downarrow0}\frac{\sum_{i=1}^n\Pr\left(P_i<\omega_i {\alpha}/{n^{1-\gamma}}\right)}{\sum_{i=1}^n\Pr\left(P_i<\omega_i{\alpha}/{n}\right)}\\
&=\frac{\sum_{i=1}^n\lim_{\alpha\downarrow0}\Pr\left(P_i<\omega_i {\alpha}/{n^{1-\gamma}}\right)/\Pr\left(P_1<\omega_1 {\alpha}/{n^{1-\gamma}}\right)}{\sum_{i=1}^n\lim_{\alpha\downarrow0}\Pr\left(P_i<\omega_i{\alpha}/{n}\right)/\Pr\left(P_1<\omega_1{\alpha}/{n}\right)}\times\lim_{\alpha\downarrow0}\frac{\Pr\left(P_1<\omega_1 {\alpha}/{n^{1-\gamma}}\right)}{\Pr\left(P_1<\omega_1{\alpha}/{n}\right)}\\
&=\frac{\sum_{i=1}^n c_i}{\sum_{i=1}^n c_i}\times\lim_{\alpha\downarrow0}\frac{(\omega_1\alpha/n^{1-\gamma})^\beta h(\omega_1\alpha/n^{1-\gamma})}{\left(\omega_1{\alpha}/{n}\right)^\beta h(\omega_1\alpha/n)}
=n^{\gamma\beta}.
\end{align*}
Following the proof of \Cref{prop:sum-tail-general-norm} (see \Cref{subsec:proof-sum-tail-general-norm}), the denominator can be rewritten as
\begin{equation*}
\label{eq:max-prob-tail}
\begin{split}
&\lim_{x\to\infty}\frac{\Pr(\max_{i\in[n]}{X}_i>x)}{\sum_{i=1}^n\Pr({X}_i>x)}
=1\bigg/\sum_{i=1}^n\lim_{x\to\infty}\frac{\Pr({X}_i>x)}{\Pr({\mathbf{X}}\in x[\mathbf{0},\mathbf{1}]^c)}\\
&=\ell(c_1,\dots,c_n)/\sum_{i=1}^n c_i
=\frac{1}{\sum_{i=1}^n c_i}\int_{\punitsphereone{n}} \max_{i\in[n]}(c_i\theta_i)H^*(\intd\bm{\theta}),
\end{split}
\end{equation*}
where $H^*$ is the spectral measure of ${\mathbf{X}}$ (or copula of $\mathbf{1}-\mathbf{P}$). Since $c_i\theta_i\ge0$,
\begin{equation}
\label{eq:inf_norm_expansion}
\begin{split}
&\lim_{x\to\infty}\frac{\Pr(\max_{i\in[n]}{X}_i>x)}{\sum_{i=1}^n\Pr({X}_i>x)}
=\frac{1}{\sum_{i=1}^nc_i}\int_{\punitsphereone{n}}\max_{i\in[n]}(c_i\theta_i)H^*(\intd\bm{\theta})\\
&\le\frac{1}{\sum_{i=1}^nc_i}\int_{\punitsphereone{n}}\left(\sum_{i=1}^n(c_i\theta_i)^{1/(\beta\gamma)} \right)^{\beta\gamma} H^*(\intd\bm{\theta})
=n^{\gamma\beta}\lim_{x\to\infty}\frac{\Pr(\frac{1}{n}\sum_{i=1}^n {X}_i>x)}{\sum_{i=1}^n\Pr({X}_i>x)}.
\end{split}
\end{equation}
Since $c_i>0$ for all $i\in I_{\mathbf{P}}$, the inequality in \eqref{eq:inf_norm_expansion} is strict when (1) $|I_{\mathbf{P}}|\ge2$, and (2) $H^*$ has positive mass on $\{\bm{\theta}\in \punitsphereone{n}: \theta_i \theta_j>0  \mbox{~for some~} i\neq j~\mathrm{and}~i,j\in I_{\mathbf{P}}\}$, i.e., $(1-P_{i_1},\dots,1-P_{i_S})$ is not asymptotically independent where $I_{\mathbf{P}}=\{i_1,\dots,i_S\}$. 

Plugging in \eqref{eq:change-test-to-x-tilde}-\eqref{eq:inf_norm_expansion}, the theorem follows.
\end{proof}

\subsection{Proof of \Cref{thm:bonferroni-gamma-zero}}
\label{subsec:proof-bonferroni-gamma-zero}
\BonferroniGammaZero*
\begin{proof}
Let $F\in\mathscr{R}_{-\gamma}$. By \eqref{eq:inf_norm_expansion} of the proof of \Cref{thm:bonferroni-more-conservative}, it holds that
\begin{equation}
\label{eq:thm4.5proof1}
\begin{split}
&\lim_{\gamma\downarrow0}\lim_{\alpha\downarrow0} \frac{\Pr\left(P_\mathrm{comb}^{F, \bm \omega}\leq \alpha\right)}{\Pr\left(P_{\mathrm{bon}}^{\bm{\omega}}\leq \alpha\right)}\\
&=\lim_{\gamma\downarrow0}n^{\gamma\beta}\lim_{x\to\infty}\frac{\Pr(\frac{1}{n}\sum_{i=1}^n {X}_i>x)}{\sum_{i=1}^n\Pr({X}_i>x)}\bigg/\lim_{x\to\infty}\frac{\Pr(\max_{i\in[n]}{X}_i>x)}{\sum_{i=1}^n\Pr({X}_i>x)}\\
&=\frac{\lim_{\gamma\downarrow0}\int_{\punitsphereone{n}}\left(\sum_{i=1}^n(c_i\theta_i)^{\frac{1}{\beta\gamma}} \right)^{\beta\gamma} H^*(\intd\bm{\theta})}{\int_{\punitsphereone{n}} \max_{i\in[n]}(c_i\theta_i)H^*(\intd\bm{\theta})},   
\end{split}
\end{equation}
where ${\mathbf{X}}=\left({X}_1,\dots,{X}_n\right)\in\mathrm{MRV}_{-\beta\gamma}$ is on $\Xi$ with $\beta\le1$, and its $c_i$s are defined in \eqref{eq:define-ci-Ip-alternative} and \eqref{eq:define-ci-not-Ip-alternative} and $H^*$ is its spectral measure.
Since $\left(\sum_{i=1}^n(c_i\theta_i)^{\frac{1}{\beta\gamma}} \right)^{\beta\gamma}$ is a non-increasing function of $\gamma$ (see Theorem 19 of \citesupp{hardy1952inequalities}) and  
\[
\lim_{\gamma\downarrow0}\left(\sum_{i=1}^n(c_i\theta_i)^{\frac{1}{\beta\gamma}} \right)^{\beta\gamma}
=\max_{i\in[n]}\left(c_i\theta_i\right),
\]
by the monotone convergence theorem, it holds that
\begin{align*}
&\lim_{\gamma\downarrow0}\int_{\punitsphereone{n}}\left(\sum_{i=1}^n(c_i\theta_i)^{\frac{1}{\beta\gamma}} \right)^{\beta\gamma} H^*(\intd\bm{\theta})\\
&=\int_{\punitsphereone{n}}\lim_{\gamma\downarrow0}\left(\sum_{i=1}^n(c_i\theta_i)^{\frac{1}{\beta\gamma}} \right)^{\beta\gamma} H^*(\intd\bm{\theta})\\
&=\int_{\punitsphereone{n}}\max_{i\in[n]}(c_i\theta_i) H^*(\intd\bm{\theta}).
\end{align*}
Substituting this into \eqref{eq:thm4.5proof1} yields \eqref{eq:asymp-bonferroni-gamma-zero} 
which finishes the proof.
\end{proof}

\subsection{Proof of \Cref{thm:bonferroni-more-conservative-with-dependence}}
\label{subsec:proof-bonferroni-dependence}
\BonferroniDependence*
\begin{proof}
Since $F\in\mathscr{R}_{-1}$, by \Cref{thm:combination-test-valid},
\[
\lim_{\alpha\downarrow0}\frac{\Pr_0\left(P_\mathrm{comb}^{F, \bm \omega}\leq \alpha\right)}{\alpha}=1.
\]
Then, the asymptotic type-I error ratio between the weighted combination test and the combination test is
\begin{align*}
&r(C^*)
=\lim_{\alpha\downarrow0}\frac{\Pr_0\left(P^{F,\bm{\omega}}_\mathrm{comb}\leq \alpha\right)}{\Pr_0\left(P^{\bm{\omega}/n}_{\mathrm{bon}}\leq \alpha\right)}
=\lim_{\alpha\downarrow0}\frac{\alpha}{\Pr_0(n\min_{i\in[n]} P_i/\omega_i\le\alpha)}\\
&=\lim_{\alpha\downarrow0}\frac{\alpha}{1-\Pr_0(1-P_1<1-\omega_1\alpha/n,\dots,1-P_n<1-\omega_n\alpha/n)}\\
&=n/\ell(\omega_1,\dots,\omega_n) = -n/\log C^*(e^{-\omega_1},\ldots,e^{-\omega_n}) ,
\end{align*}
where $\ell$ is the stable tail dependence function of the copula $C^*$ and the last equality is due to \eqref{eq:c-star-and-l} in the main text. 
  Since $C^*_1\preceq_{\mathrm{pw}}C^*_2$, %
we have $ 
r(C_1^*)\le r(C_2^*).
$ This completes the proof.
\end{proof}

\subsection{Proof of \Cref{thm:bonferroni-more-conservative-with-dependence-power}}
\label{subsec:proof-power-gain-dependence}
\PowerGainDependence*
\begin{proof}
Without loss of generality, we suppose $\mu_1\ge\dots\ge\mu_n\ge0$.
Following \Cref{prop:alternative-a} and the proof of \Cref{thm:bonferroni-more-conservative} in \Cref{subsec:proof-bonferroni-conservative}, $\beta=1$ and hence
\begin{equation}
\label{eq:power-gain-dependence-type-a-final}
\begin{split}
&\tilde{r}(C^*)
=\lim_{\alpha\downarrow0}\frac{\Pr\left(P^{F,\bm{\omega}}_\mathrm{comb}\leq \alpha\right)}{\Pr\left(P^{\bm{\omega}/n}_{\mathrm{bon}}\leq \alpha\right)}
=\frac{\int_{\punitsphereone{n}}\sum_{i=1}^nc_i\theta_i H^*(\intd\bm{\theta})}{\int_{\punitsphereone{n}} \max_{i\in[n]}(c_i\theta_i)H^*(\intd\bm{\theta})}\\
&=\frac{\sum_{i=1}^nc_i}{\int_{\punitsphereone{n}} \max_{i\in[n]}(c_i\theta_i)H^*(\intd\bm{\theta})}
=\frac{\sum_{i=1}^nc_i}{\ell(c_1,\dots,c_n)}
=\frac{\sum_{i=1}^nc_i}{-\log C^*(e^{-c_1},\dots,e^{-c_n})},    
\end{split}
\end{equation}
where $H^*$ and $\ell$ are the spectral measure and the stable tail dependence function of $C$, and
\[
c_i:=\lim_{t\downarrow0}\frac{1-G_i\left(G_i^{-1}(1-\omega_i t)-\mu_i\right)}{1-G_1\left(G_1^{-1}(1-\omega_1 t)-\mu_1\right)}\in[0,\infty).
\]
When $C_1^*\preceq_{\mathrm{pw}}C_2^*$, by \eqref{eq:power-gain-dependence-type-a-final}, $\tilde{r}(C_1^*)\le \tilde{r}(C_2^*)$. This completes the proof.
\end{proof}

\subsection{Proof of supplementary theoretical results}
\label{subsec:other-theory-results}

\Intermediate*
\begin{proof}
We first prove the general conclusion \eqref{eq:measure-equation}. Then, we plug in specific sets to get \eqref{eq:cube} and \eqref{eq:ball}.

\noindent\textbf{General conclusion.}
Since ${\bf X}\in {\rm MRV}_{-\gamma}$ with $\gamma>0$, \eqref{eq:260205-1} holds with $\lambda({\bf x})=\nu\!\left([0,\mathbf{x}]^{c}\right)$. 
Define $F_0(t):=1-\Pr\left(\mathbf{X}\in t[\mathbf{0},\mathbf{1}]^c\right)$. By the homogeneity of $\lambda(\cdot)$ (or Radon measure $\nu(\cdot)$), we have
\[
\lim_{t\to+\infty} \frac{\overline{F}_0(ta)}{\overline{F}_0(t)}=a^{-\gamma},~a>0.
\]
That is $F_0\in\mathscr{R}_{-\gamma}$. Then, for any $x>0$,
\begin{equation}
\label{eq:260206-1}
\lim_{t\to+\infty}\frac{\Pr(X_i>tx)}{\overline{F}_0(t)}
=\lim_{t\to+\infty}\frac{\Pr(X_i>tx)}{\overline{F}_0(tx)}
\times\lim_{t\to+\infty}\frac{\overline{F}_0(tx)}{\overline{F}_0(t)}
=c_ix^{-\gamma}.   
\end{equation}
where $c_i = \nu(B_i)$ with $B_i=\{\mathbf{x}\in\R_+^n:x_i>1\}$, $i\in [n]$.
Note that the convergence in \eqref{eq:260206-1} is uniform for $x\ge x_0>0$. 
Denote $C$ the copula of $\mathbf{X}$. It follows that for any $x_1,\dots,x_n$ where $\min_i x_i > 0$:
\begin{align}
\nu\!\left([0,\mathbf{x}]^{c}\right)
&=  \lim_{t\to\infty}\frac{1-C(F_1(tx_1),\dots,F_n(tx_n))}{\overline{F}_0(t)}\notag\\
&= \lim_{t\to\infty}\frac{1-C(1-\overline{F}_1(tx_1),\dots,1-\overline{F}_n(tx_n))}{\overline{F}_0(t)} \notag\\
&= \lim_{t\to\infty}\frac{1-C(1-c_1x_1^{-\gamma}\overline{F}_0(t),\dots,1-c_nx_n^{-\gamma}\overline{F}_0(t))}{\overline{F}_0(t)}\notag\\ 
& = \ell(c_1x_1^{-\gamma},\dots,c_nx_n ^{-\gamma})\notag\\
& = \mu^*\left([\mathbf{0},\left(c_1^{-1}x_1^{\gamma},\dots,c_n^{-1}x_n^{\gamma}\right)]^c\right)
\label{eq:260206-4}
\end{align}
where the third equality follows from the monotonicity of copula and the uniform convergence of regularly varying  function, i.e., the convergence of (\ref{eq:260206-1}). The last equality plugs in \eqref{eq:260127-10}. By the definition of the Borel set, we can get \eqref{eq:measure-equation}, i.e., for any $B\in[\mathbf{0},\bm{\infty})\setminus\{\mathbf{0}\}$
\[
\nu(B) = \mu^*\left(\left\{\mathbf{x}:~\left(\mathbf{c}\odot\mathbf{x}\right)^{1/\gamma}\in B\right\}\right).
\]

\noindent\textbf{Special cases.}
When plugging in \eqref{eq:v2-260127-4} into \eqref{eq:260206-4}, \eqref{eq:cube} is a straightforward conclusion. Then we prove \eqref{eq:ball}
\begin{align*}
\nu(\{\mathbf{x}: \|\mathbf{x}\|_1>n \}) 
& = \mu^*( \{ \mathbf{x}: \|( {\mathbf c}\odot \mathbf{x})^{^{1/\gamma}}\|_1>n\}) \\
& = \mu^*\circ T^{-1}(\{(r,  \bm{\theta}): r^{^{1/\gamma}}\|( {\mathbf c}\odot \bm{\theta})^{^{1/\gamma}}\|_1>n\}) \\
& = \mu^*\circ T^{-1}(\{(r,  \bm{\theta}): r> n^{\gamma}\left(\|\left( {\mathbf c}\odot \bm{\theta}\right)^{^{1/\gamma}}\|_1\right)^{-\gamma}\}) \\
& = \int_{\punitsphereone{n}}\left(\int_{n^{\gamma}\left(\|( {\mathbf c}\odot \bm{\theta})^{^{1/\gamma}}\|_1\right)^{-\gamma}}^\infty r^{-2}\intd r\right)H^*(\mathrm{d}\bm{\theta})\\
& = n^{-\gamma}\int_{\punitsphereone{n}}\|((c_1{\theta}_1)^{1/\gamma},\dots,(c_n{\theta}_n)^{1/\gamma})\|_1^\gamma H^*(\mathrm{d}\bm{\theta}).
\end{align*} 
This completes the proof. 
\end{proof}

\PartialOrder*
\begin{proof}
Denote by $S_{\perp}^*$ and $S_c^*$ as the spectral measures of $C_\mathrm{indep}$ and $C_{\mathrm{cdep}}$, respectively. Denote the spectral measure of $C$ as $H^*$. Since the spectral measure of the asymptotic independence consists of point masses at the $n$ vertices of the simplex,  while that of the asymptotic complete dependence consists of
a single point mass of size $n$  at the center point $\mathbf{1}=(1,\dots,1)\in\R^n$, we have for any convex function $\phi$:
$$\int_{\punitsphereone{n}} \phi(\bm\theta)S_c^*(\intd\bm\theta) = n\phi\left(\frac{1}{n}, \dots, \frac{1}{n}\right), \quad \int_{\punitsphereone{n}} \phi(\bm\theta)S_\perp^*(\intd\bm\theta) = \sum_{i=1}^n\phi(\mathbf e_i).$$
Notice that 
\[
\int_{\punitsphereone{n}} \phi(\bm\theta)H^*(\intd\bm\theta)\le\int_{\punitsphereone{n}}\sum_{i=1}^n\theta_i\phi(\mathbf{e}_i)H^*(\intd\bm\theta)=\sum_i\phi(\mathbf e_i)=\int_{\punitsphereone{n}} \phi(\bm\theta)S_\perp^*(\intd\bm\theta)
\]
indicating that $H^*\le_{\rm cx} S_{\perp}^*$.
On the other hand, as $H^*/n$ is a probability measure on $\punitsphereone{n}$, using Jensen's inequality:
\begin{align*}
&\int_{\punitsphereone{n}} \phi(\bm\theta)H^*(\intd\bm\theta)/n \\
\geq&\phi\left(\int_{\punitsphereone{n}}\theta_1 H^*(\intd\bm\theta)/n,\dots,\int_{\punitsphereone{n}}\theta_n H^*(\intd\bm\theta)/n\right) \\
=&  \phi\left(\frac{1}{n}, \dots, \frac{1}{n}\right)    
\end{align*}
indicating that $H^*\ge_{\rm cx} S_c^*$.
\end{proof}

\QVSH*
\begin{proof}
The result follows from \eqref{eq:mrv-tail-general-norm} of \Cref{prop:sum-tail-general-norm} in the main text %
and that the integral function  in the right-hand side of  \eqref{eq:mrv-tail-general-norm} is convex in $\boldsymbol{\theta}\in\mathcal N_{+,\|\cdot\|_1}^n.$
\end{proof}

\AbsoluteGauss*
\begin{proof}
For convenience, we let $Z_i=|T_i|$, $i=1,\dots,n$.
Note that asymptotic independence of $(Z_1,\dots,Z_n)$ is equivalent to that pairwise asymptotic independence holds for all $Z_i$, $Z_j$ pairs:
\begin{equation}
\label{eq:pairwise-indep-check}
\lim_{s\downarrow0}\Pr[F_i(Z_i)>1-s\mid F_j(Z_j)>1-s]
=\lim_{s\downarrow0}\Pr[Z_i > \overline{F}_i^{-1}(s)\mid Z_j > \overline{F}_j^{-1}(s)]
=0,
\end{equation}
where $F_i$ and $F_j$ are cumulative distribution functions of $Z_i$ and $Z_j$.
We check the pairwise condition instead. Let $\mu_i$ denote mean of $T_i$. Then the survival function of $Z_i$ is
\[
\overline{F}_i(t) = \Pr(|T_i|> t)
=\Pr(T_i> t)+\Pr(T_i<- t)
=\overline{\Phi}(t-\mu_i)+\overline{\Phi}(t+\mu_i).
\]
Without loss of generality, we assume $\mu_i,\mu_j\ge0$. 
Let $q_i(s)=\overline{F}_i^{-1}(s)$, $i=1,\dots,n$. Then Equation \eqref{eq:pairwise-indep-check} is equivalent to
\[
\Pr\left[Z_i > q_i(s),Z_j > q_j(s)\right] = o(s),
\]
as $s\downarrow0$.
Since $\mu_k\ge 0$, for $k=i,j$,
\[
\overline{\Phi}(q_k(s)-\mu_k)
\le s
\le 2\overline{\Phi}(q_k(s)-\mu_k),
\]
and hence
\[
\overline{\Phi}^{-1}(s)+\mu_k\le q_k(s)\le \overline{\Phi}^{-1}(s/2)+\mu_k.
\]
Since
\[
\lim_{s\downarrow0}\frac{\overline{\Phi}^{-1}(s)}{\sqrt{-2\log(s)}}=1,
\]
it follows that for $k=i,j$,
\begin{equation}
\label{eq:qk-order}
\lim_{s\downarrow0}\frac{q_k(s)}{\overline{\Phi}^{-1}(s)}
=\lim_{s\downarrow0}\frac{q_k(s)}{\sqrt{-2\log(s)}}
=1.
\end{equation}

Let $X_k=T_k-\mu_k$, $k=i,j$. For any $\varepsilon,\eta\in\{-1,1\}$,
\[
\Pr\left[\varepsilon T_i>q_i(s),\eta T_j>q_j(s)\right]
=
\Pr\left[\varepsilon X_i>q_i(s)-\varepsilon\mu_i,\eta X_j>q_j(s)-\varepsilon\mu_j\right].
\]
Since the pair $\left(\varepsilon X_i,\eta X_j\right)$ is bivariate standard normal with correlation
$r=\varepsilon\eta\rho$ and $|r|<1$,
\begin{align*}
&\Pr(\varepsilon X_i > q_i(s)-\varepsilon\mu_i,\eta X_j > q_j(s)-\varepsilon\mu_j)\\
\le&\Pr(\varepsilon X_i+\eta X_j > q_i(s)+q_j(s)-\varepsilon\mu_i-\eta\mu_j)\\
=&\overline{\Phi}\left(\frac{q_i(s)+q_j(s)-\varepsilon\mu_i-\eta\mu_j}{\sqrt{2(1+r)}}\right). 
\end{align*}
Plugging in \eqref{eq:qk-order}, we have
\[
\lim_{s\downarrow0}
\frac{q_i(s)+q_j(s)-\varepsilon\mu_i-\eta\mu_j}{\sqrt{2(1+r)}} 
\bigg/\sqrt{\frac{2}{1+r}}\overline{\Phi}^{-1}(s)
=1.
\]
Since $\sqrt{2/(1+r)}>1$, by Mills' ratio,
\[
\overline{\Phi}\left(\frac{q_i(s)+q_j(s)-\varepsilon\mu_i-\eta\mu_j}{\sqrt{2(1+r)}}\right)
=o\left(\overline{\Phi}\left(\overline{\Phi}^{-1}(s)\right)\right)
=o(s)
\]
Summing over the four choices of \((\varepsilon,\eta)\) gives
\begin{align*}
\Pr\left(Z_i>q_i(s),Z_j>q_j(s)\right)=o(s).
\end{align*}
Thus every pair $(Z_i,Z_j)$ is asymptotically independent, and consequently $(|T_1|,\dots,$ $|T_n|)$ is asymptotically independent.
\end{proof}

\Absolutet*
\begin{proof}
Write
$\mathbf{T}:=\mathbf{u}+\mathbf{Y}$,
where $\mathbf{Y}$ is a centered multivariate $t$ random vector with
degrees of freedom $\nu$ and scale matrix $\Sigma$. It is well known that
$\mathbf Y$ is multivariate regularly varying with tail index $\nu$ \citep{nikoloulopoulos2009extreme}.
Hence there exists a nonzero Radon measure $\mu$ on
$\mathbb R^n\setminus\{\mathbf{0}\}$ such that
\[
\frac{\Pr(t^{-1}\mathbf{Y}\in \cdot)}
     {\Pr(\|\mathbf{Y}\|>t)}
\xrightarrow{v}
\mu(\cdot),
\qquad t\to\infty,
\]
where $\xrightarrow{v}$ stands for vague convergence.
Define the componentwise absolute-value map
\[
\phi:\mathbb R^n\setminus\{\boldsymbol 0\}
\to \mathbb R_+^n\setminus\{\boldsymbol 0\},
\qquad
\phi(\mathbf{x})=(|x_1|,\ldots,|x_n|).
\]
The map $\phi$ is continuous and homogeneous, and
$\phi(\mathbf{x})\neq \boldsymbol 0$ whenever
$\mathbf{x}\neq \boldsymbol 0$.  Notice that the regular variation of
$\mathbf{Y}$ is on the whole punctured space, not only on the positive
orthant. Hence the limiting measure $\mu$ contains tail contributions from
all $2^n$ orthants. The map $\phi$ folds these orthant-wise contributions
onto $\mathbb R_+^n\setminus\{\boldsymbol 0\}$.

Let $A\subset \mathbb R_+^n\setminus\{\boldsymbol 0\}$ be a Borel set
bounded away from $\boldsymbol 0$ such that
\[
\mu\bigl(\phi^{-1}(\partial A)\bigr)=0 .
\]
Then $\phi^{-1}(A)$ is bounded away from $\boldsymbol 0$, and by the vague
convergence above,
\[
\begin{aligned}
\frac{\Pr(t^{-1}\phi(\mathbf{Y})\in A)}
     {\Pr(\|\mathbf{Y}\|>t)}
&=
\frac{\Pr(\phi(t^{-1}\mathbf{Y})\in A)}
     {\Pr(\|\mathbf{Y}\|>t)} \\
&=
\frac{\Pr(t^{-1}\mathbf{Y}\in \phi^{-1}(A))}
     {\Pr(\|\mathbf{Y}\|>t)}
\longrightarrow
\mu(\phi^{-1}(A)).
\end{aligned}
\]
Therefore $\phi\left(\mathbf{Y}\right)$ is multivariate regularly varying on $\mathbb R_+^n\setminus\{\boldsymbol 0\}$ with tail index $\nu$ and limit measure $\mu_{|\cdot|}(A)=\mu(\phi^{-1}(A)).$

Next we show that the location shift does not change the tail limit.
For every $\mathbf{Y}\in\mathbb R^n$,
$\big\|\phi(\mathbf{u}+\mathbf{Y})-\phi(\mathbf{Y})\big\|
\le \|\mathbf{u}\|$.
Hence
\[
t^{-1}\big\|\phi(\mathbf{u}+\mathbf{Y})-\phi(\mathbf{Y})\big\|
\le t^{-1}{\|\mathbf{u}\|}\to 0.
\] Therefore $\phi(\mathbf{u}+\mathbf{Y})$ and
$\phi(\mathbf{Y})$ are asymptotically equivalent at scale $t$. By the
standard stability of multivariate regular variation under
asymptotically negligible perturbations, $\phi(\mathbf{u}+\mathbf{Y})$
has the same tail limit as $\phi(\mathbf{Y})$. Consequently,
$(|T_1|,\ldots,|T_n|)$
is multivariate regularly varying on
$\mathbb R_+^n\setminus\{\boldsymbol 0\}$ with tail index $\nu$.

It remains to connect this MRV distribution to its copula. Since
$\Sigma$ is positive definite, each marginal $T_i$ is a non-degenerate
univariate $t$ random variable with degrees of freedom $\nu$. Hence
$|T_i|$ has a continuous marginal distribution and
\[
\Pr(|T_i|>t)\sim c_i t^{-\nu},
\qquad t\to\infty,
\]
for some constant $c_i>0$. Thus the marginals of $|\mathbf{T}|$ are
continuous and regularly varying with the same tail index $\nu$.
By the connection between MRV distributions and MRV copulas used in
Section~\ref{subsec:mrv-copula-and distribution}, or equivalently by
\citep[Corollary~5.18]{resnick2008extreme} together and  \citep[Eq.~(8.79)]{beirlant2006statistics}, the copula of
$|\mathbf{T}|$ is an MRV copula. This completes the proof.
\end{proof}

\AlternativeA*
\begin{proof}
Since
\begin{equation*}
    (1-P_1,\dots,1-P_n) \overset{d}{=}(G_1(G^{-1}_1(U_1)+\mu_1),\dots, G_n(G^{-1}_n(U_n)+\mu_n)),
\end{equation*}
and all $G_i$s are continuous, \Cref{asmp:p-alternative} (i) and (ii) are straightforward conclusions. For (iii), we first compute
\[
c_{ij}:=\lim_{t\downarrow0}\frac{\Pr(P_i\le t)}{\Pr(P_j\le t)}
\]
for all $i, j$ pairs. 

When all $G_i$s belong to $\mathscr{D}$ (i),
\begin{align*}
&c_{ij} 
= \lim_{t\downarrow0}\frac{\Pr(1-G_i\left(G_i^{-1}(U_i)+\mu_i\right)\le t)}{\Pr(1-G_j\left(G_j^{-1}(U_j)+\mu_j\right)\le t)}
= \lim_{t\downarrow0}\frac{\Pr(\overline{G}_i\left(\overline{G}_i^{-1}(1-U_i)+\mu_i\right)\le t)}{\Pr(\overline{G}_j\left(\overline{G}_j^{-1}(1-U_j)+\mu_j\right)\le t)}\\
&=\lim_{t\downarrow0}\frac{\overline{G}_i\left(\overline{G}_i^{-1}(t)-\mu_i\right)}{\overline{G}_j\left(\overline{G}_j^{-1}(t)-\mu_j\right)}
=\lim_{t\downarrow0}\frac{\overline{G}_i\left(\overline{G}_i^{-1}(t)\right)}{\overline{G}_j\left(\overline{G}_j^{-1}(t)\right)}
=\lim_{t\downarrow0}\frac{t}{t}
=1.
\end{align*}
That is, $I_{\mathbf{P}}=[n]$. To confirm (iii) and $\beta=1$, it suffices to prove that for any $F\in\mathscr{R}_{-1}$, $Q_F(1-P_1)\in\mathscr{R}_{-1}$. The tail probability of $Q_F(1-P_1)$ is
\begin{align*}
&\Pr(Q_F(1-P_1)>x)
=\Pr\left(G_1(G^{-1}_1(U_1)+\mu_1)>F(x)\right)\\
&=\Pr\left(\overline{G}_1(\overline{G}^{-1}_1(1-U_1)+\mu_1)<\overline{F}(x)\right)
=\overline{G}_1\left(\overline{G}_1^{-1}(\overline{F}(x))-\mu_1\right).
\end{align*}
Since for all $y>0$,
\[
\lim_{x\to\infty}\frac{\overline{G}_1\left(\overline{G}_1^{-1}(\overline{F}(xy))-\mu_1\right)}{\overline{G}_1\left(\overline{G}_1^{-1}(\overline{F}(x))-\mu_1\right)}
=\lim_{x\to\infty}\frac{\overline{G}_1\left(\overline{G}_1^{-1}(\overline{F}(xy))\right)}{\overline{G}_1\left(\overline{G}_1^{-1}(\overline{F}(x))\right)}
=\lim_{x\to\infty}\frac{\overline{F}(xy)}{\overline{F}(x)}
=y^{-1},
\]
$Q_F(1-P_1)\in\mathscr{R}_{-1}$ by the definition.

When all $G_i$s belong to $\mathscr{D}$ (ii), i.e., all $G_i$s are standard normals,
\begin{align*}
&c_{ij} 
=\lim_{t\downarrow0}\frac{\overline{G}_i\left(\overline{G}_i^{-1}(t)-\mu_i\right)}{\overline{G}_j\left(\overline{G}_j^{-1}(t)-\mu_j\right)}
=\lim_{t\downarrow0}\frac{\overline{\Phi}\left(\overline{\Phi}^{-1}(t)-\mu_i\right)}{\overline{\Phi}\left(\overline{\Phi}^{-1}(t)-\mu_j\right)}\\
&=\lim_{t\downarrow0}\frac{\overline{\Phi}\left(\overline{\Phi}^{-1}(t)\right)}{\overline{\Phi}\left(\overline{\Phi}^{-1}(t)\right)}\times\exp\left\{\mu_i\overline{\Phi}^{-1}(t)-\mu_j\overline{\Phi}^{-1}(t)\right\}\\
&=\lim_{t\downarrow0}\frac{\exp\left\{\mu_i\sqrt{-2\log{t}-\log\log\frac{1}{t}-\log(4\pi)+o(1)}\right\}}{\exp\left\{\mu_j\sqrt{-2\log{t}-\log\log\frac{1}{t}-\log(4\pi)+o(1)}\right\}}\\
&=\lim_{t\downarrow0}\exp\left\{\frac{-2(\mu_i^2-\mu_j^2)\log{t}}
{\mu_i\sqrt{-2\log{t}}+\mu_j\sqrt{-2\log{t}}}\right\}
=\begin{cases}
    0,~~&\mu_i<\mu_j\\
    1,~~&\mu_i=\mu_j\\
    \infty,~~&\mu_i>\mu_j
\end{cases}
.
\end{align*}
That is, $I_{\mathbf{P}}=\{i:\mu_i=\max_{j\in[n]}\mu_j\}$. Without loss of generality, we suppose $\mu_1=\max_{j\in[n]}\mu_j$. To confirm (iii) and $\beta=1$, it suffices to prove that for any $F\in\mathscr{R}_{-1}$, $Q_F(1-P_1)\in\mathscr{R}_{-1}$. The tail probability of $Q_F(1-P_1)$ is
\begin{align*}
&\Pr(Q_F(1-P_1)>x)
=\overline{G}_1\left(\overline{G}_1^{-1}(\overline{F}(x))-\mu_1\right)
=\overline{\Phi}\left(\overline{\Phi}^{-1}(\overline{F}(x))-\mu_1\right)
\end{align*}
We check the definition of regularly varying tailed distribution: for all $y>0$,
\begin{align*}
&\lim_{x\to\infty}\frac{\overline{\Phi}\left(\overline{\Phi}^{-1}(\overline{F}(xy))-\mu_1\right)}{\overline{\Phi}\left(\overline{\Phi}^{-1}(\overline{F}(x))-\mu_1\right)}
=\lim_{t\downarrow0}\frac{\overline{\Phi}\left(\overline{\Phi}^{-1}\left(y^{-1}t\right)-\mu_1\right)}{\overline{\Phi}\left(\overline{\Phi}^{-1}\left(t\right)-\mu_1\right)}\\
&=\lim_{t\downarrow0}\frac{\Phi\left(\overline{\Phi}^{-1}\left(y^{-1}t\right)\right)}{\Phi\left(\overline{\Phi}^{-1}\left(t\right)\right)}\times\exp\left\{\mu_1\left(\overline{\Phi}^{-1}\left(y^{-1}t\right)-\overline{\Phi}^{-1}\left(t\right)\right)\right\}\\
&=\lim_{t\downarrow0}\frac{y^{-1}t}{t}\times\frac{\exp\left[\mu_1\sqrt{-2\log(y^{-1})-2\log{t}-\log\log(y^{-1}/t)-\log(4\pi)+o(1)}\right]}{\exp\left[\mu_1\sqrt{-2\log{t}-\log\log(1/t)-\log(4\pi)+o(1)}\right]}\\
&=y^{-1}\times \lim_{t\downarrow0}\exp\left\{\frac{\log{y}}
{\mu_1\sqrt{-2\log{t}}}\right\}
=y^{-1}.
\end{align*}
Then the proposition follows.
\end{proof}

\AlternativeB*
\begin{proof}
Since
\begin{equation*}
    (1-P_1,\dots,1-P_n) \overset{d}{=} (1-(1-U_1)^{\beta_1},\dots,1-(1-U_n)^{\beta_n}),
\end{equation*}
\Cref{asmp:p-alternative} (i) and (ii) are straightforward conclusions. For (iii), we first compute 
\[
c_{ij}:=\lim_{t\downarrow0}\frac{\Pr(P_i\le t)}{\Pr(P_j\le t)}
\]
to find $I_{\mathbf{P}}$. Since
\begin{align*}
&c_{ij}
=\lim_{t\downarrow0}\frac{\Pr((1-U_i)^{\beta_i}\le t)}{\Pr((1-U_j)^{\beta_j}\le t)}
=\lim_{t\downarrow0}t^{\frac{1}{\beta_i}-\frac{1}{\beta_j}}
=\begin{cases}
    0,\quad &\beta_i<\beta_j\\
    1,\quad &\beta_i=\beta_j\\
    \infty,\quad &\beta_i>\beta_j\\
\end{cases}~,
\end{align*}
$I_{\mathbf{P}}=\{i\in[n]:\beta_i=\beta_1\}$. For any $F\in\mathscr{R}_{-1}$, we check the distribution of $Q_F(1-P_1)$. Its tail probability is
\[
\Pr(Q_F(1-P_1)>x)=\Pr\left(1-(1-U_1)^{\beta_1}>F(x)\right)=\overline{F}(x)^{1/\beta_1}.
\]
For all $y>0$,
\[
\lim_{x\to\infty}\frac{\overline{F}(xy)^{1/\beta_1}}{\overline{F}(x)^{1/\beta_1}}
=\lim_{x\to\infty}\frac{y^{-1/\beta_1}\overline{F}(x)^{1/\beta_1}}{\overline{F}(x)^{1/\beta_1}}
=y^{-1/\beta_1}.
\]
Accordingly, $Q_F(1-P_1)\in\mathscr{R}_{-1/\beta_1}$ and this completes the proof.
\end{proof}

\bibliographystylesupp{plainnat}
\bibliographysupp{ref}

\newpage

\clearpage
\section{Supplementary Figures}
\label{sec:more-exp-results}

\begin{center}
\includegraphics[width=0.8\textwidth]{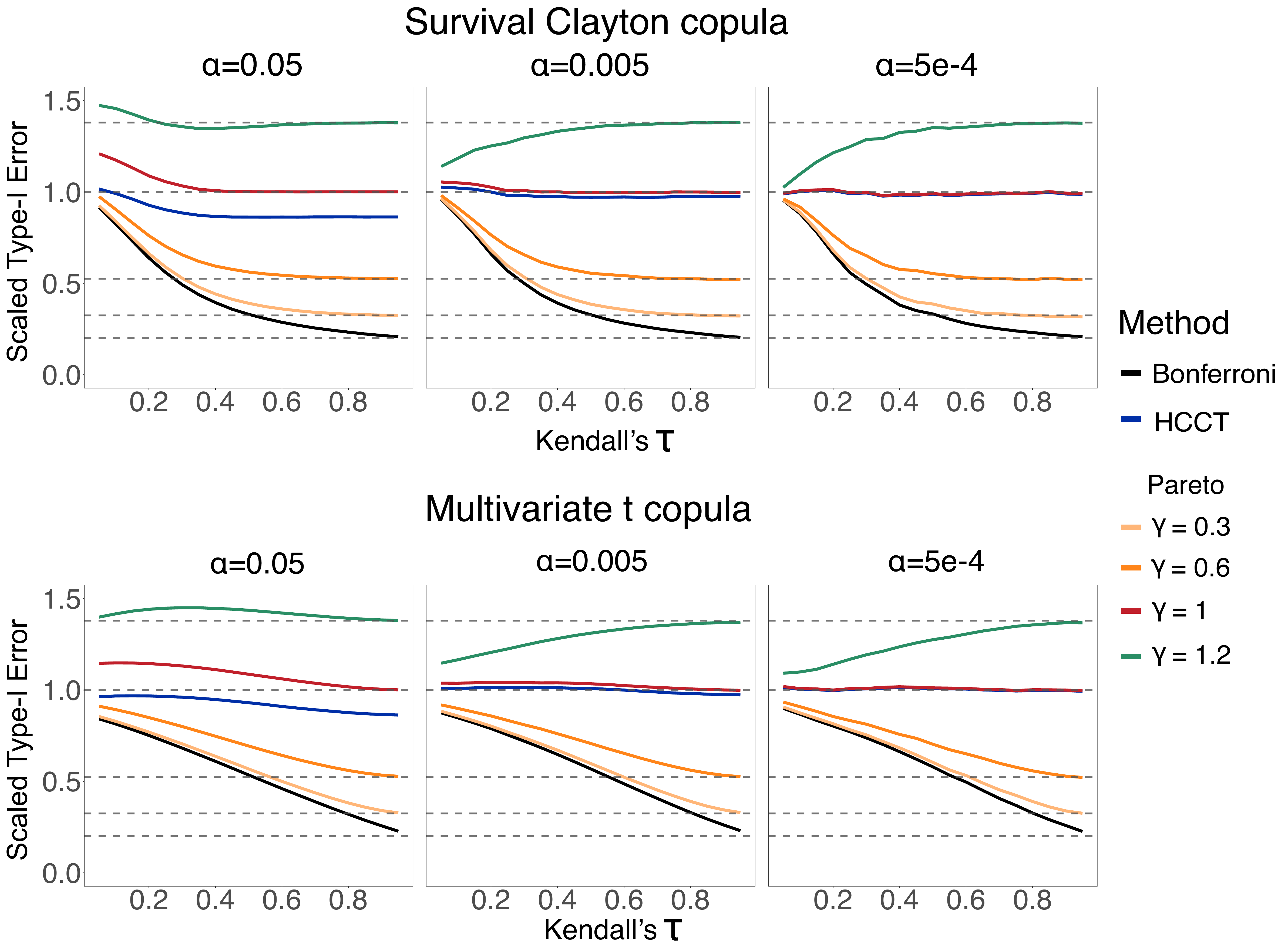}
\captionof{figure}{Empirical scaled Type-I error of the combination test using Pareto distributions and $n = 5$. The $5$ dashed horizontal lines, from bottom to top, indicate the bound $n^{\gamma -1}$ for $\gamma = 0, 0.3, 0.6, 1$ and $1.2$.}
\label{fig:validity-pareto-5}
\end{center}

\begin{figure}[!ht]
\centering
\includegraphics[width=0.8\textwidth]{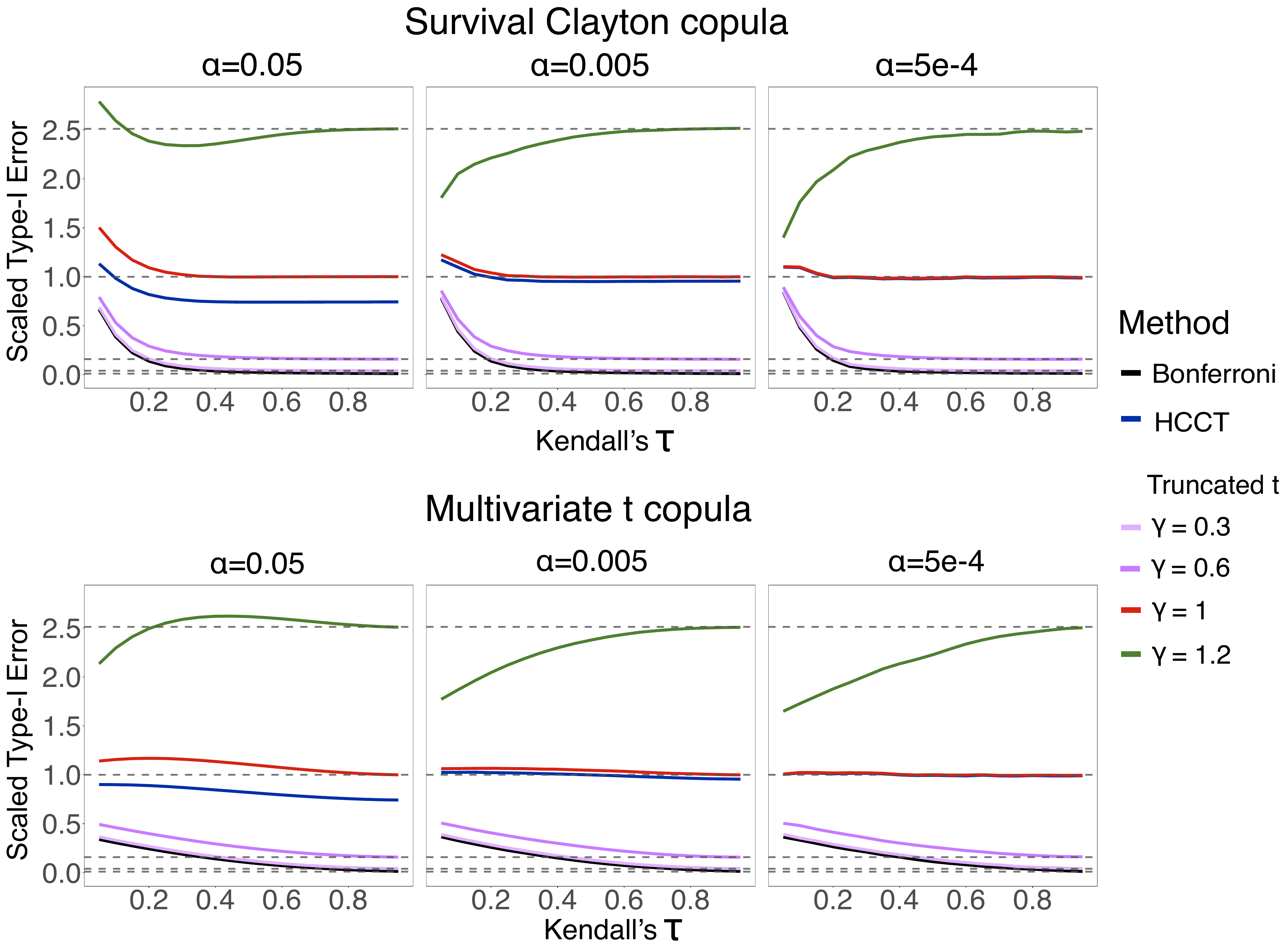}
\caption{Empirical scaled type-I-error of the combination test using truncated $t$ distributions and $n = 100$. The $5$ dashed horizontal lines, from bottom to top, indicate the bound $n^{\gamma -1}$ for $\gamma = 0, 0.3, 0.6, 1$ and $1.2$. }
\label{fig:validity-tt-100}
\end{figure}
   
\begin{figure}[!ht]
\centering
\includegraphics[width=0.9\textwidth]{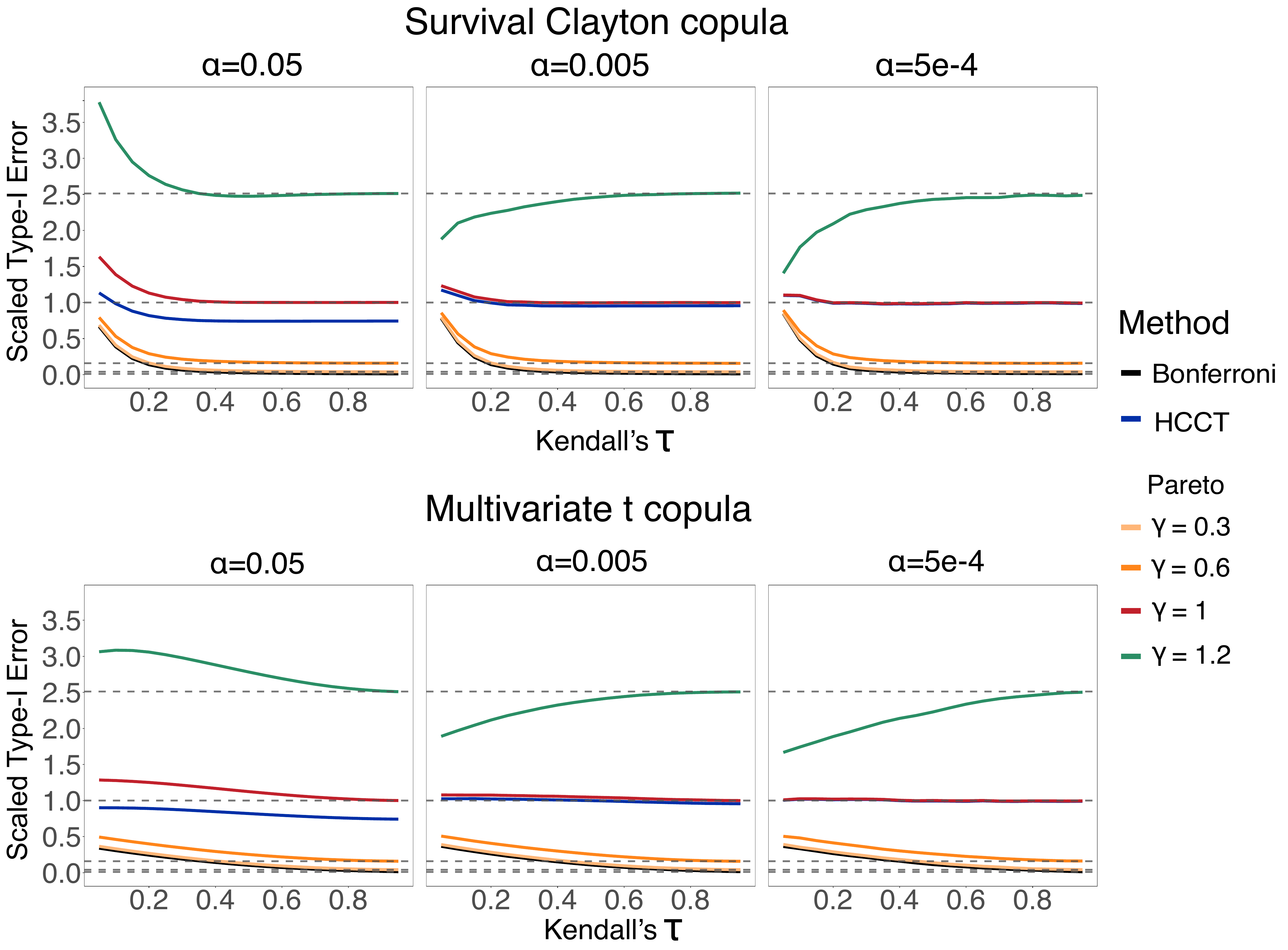}
 \caption{Empirical scaled Type-I error of the combination test using Pareto
distributions and $n = 100$. The $5$ dashed horizontal lines, from bottom to top, indicate the bound $n^{\gamma -1}$ for $\gamma = 0, 0.3, 0.6, 1$ and $1.2$. }
\label{fig:validity-pareto-100}
\end{figure}

\begin{figure}[!ht]
\centering
\includegraphics[width=\textwidth]{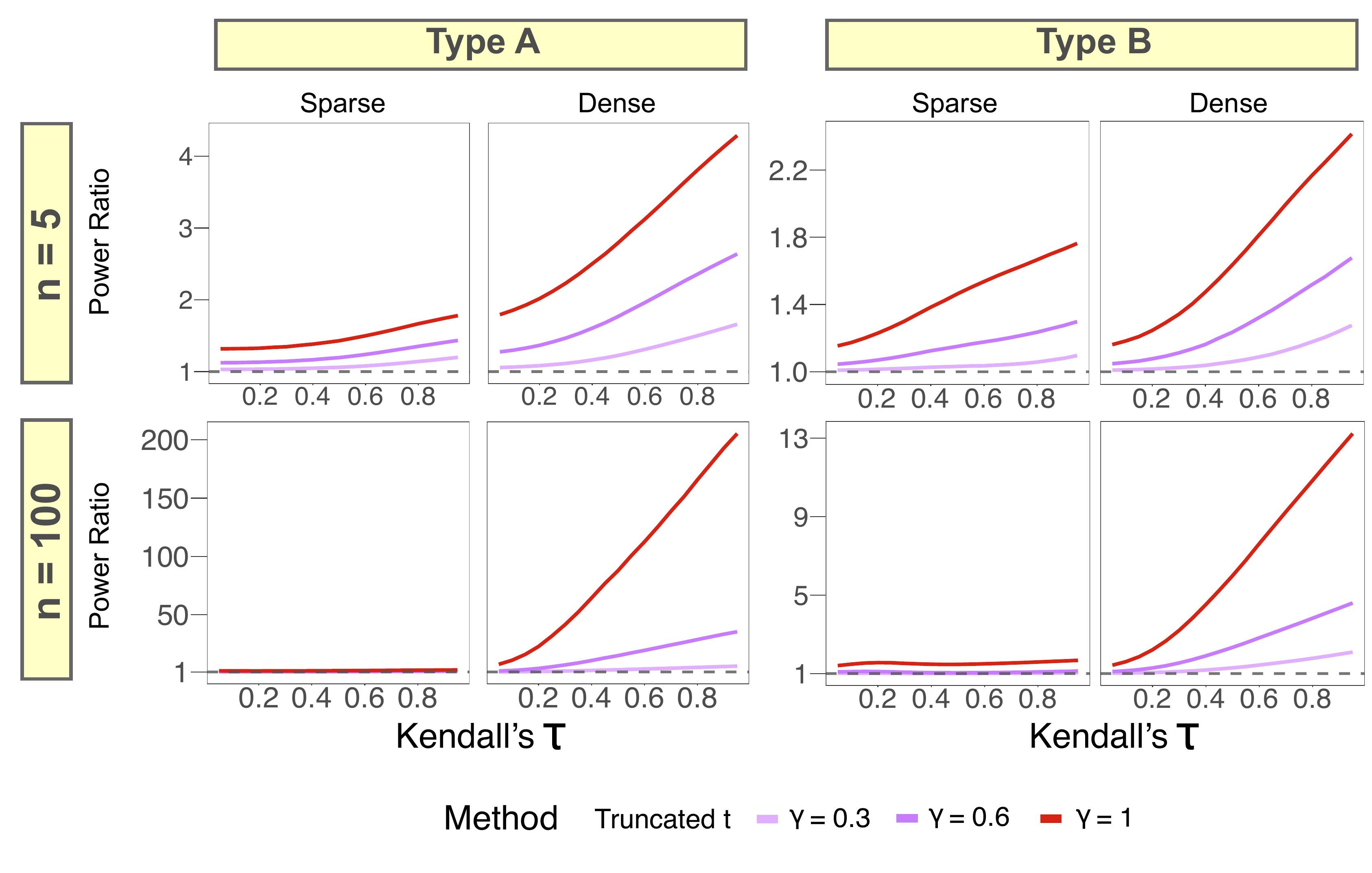}
\caption{
Power ratio of the combination test to the Bonferroni test versus Kendall's $\tau$ using truncated $t$ distributions under different alternative types at significance level $\alpha = 0.05$. The underlying dependence among base p-values is modeled using the Clayton copula. %
}
\label{fig:power-dep-clayton-5e-2}
\end{figure}

\begin{figure}[!ht]
\centering
\includegraphics[width=\textwidth]{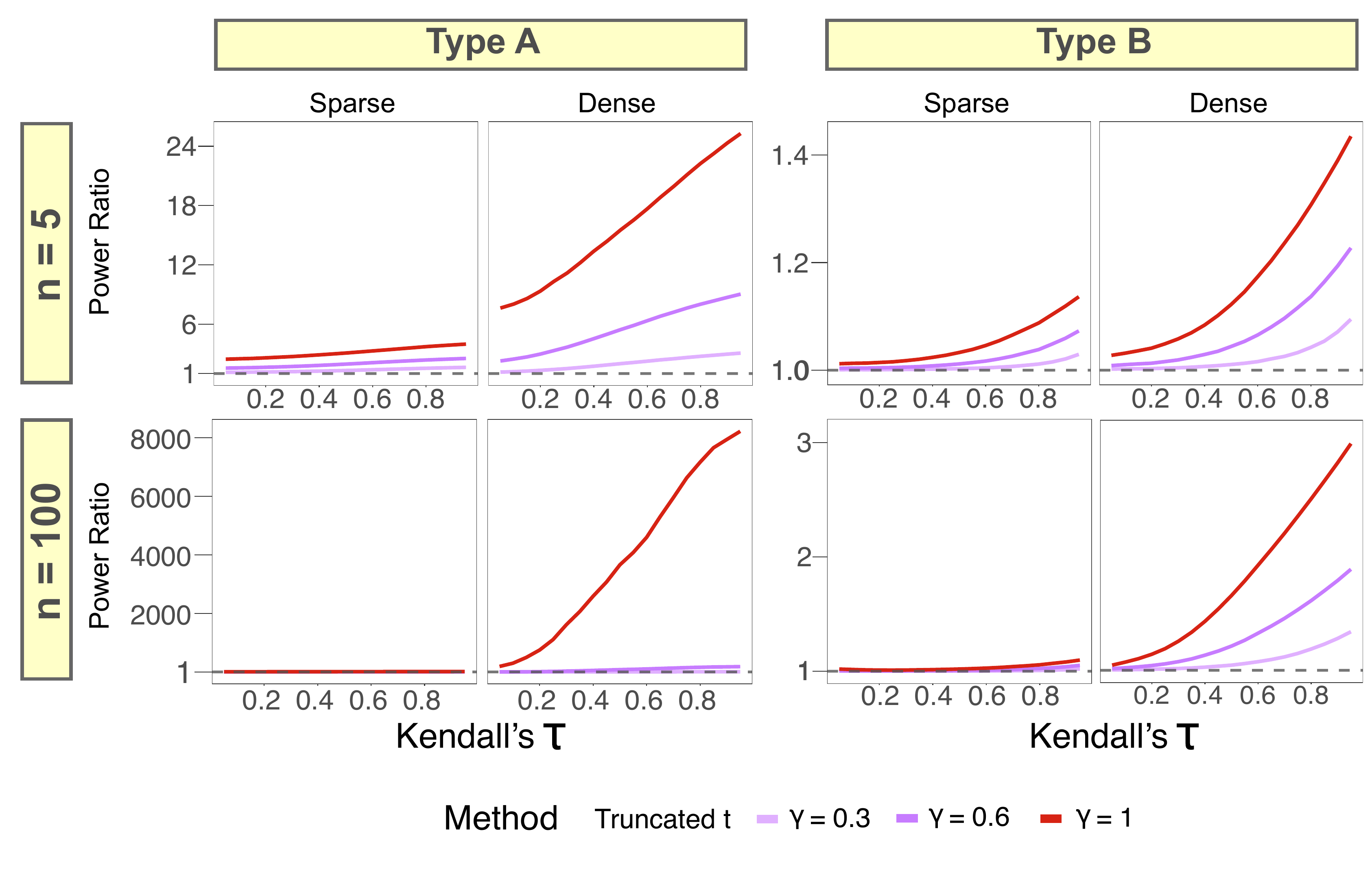}
\caption{Power ratio of the combination test to the Bonferroni test versus Kendall's $\tau$ using truncated $t$ distributions under different alternative types at significance level $\alpha = 5\times 10^{-4}$. The underlying dependence among base p-values is modeled using the Clayton copula. %
}
\label{fig:power-dep-clayton-5e-4}
\end{figure}

\begin{figure}[!ht]
\centering
\includegraphics[width=\textwidth]{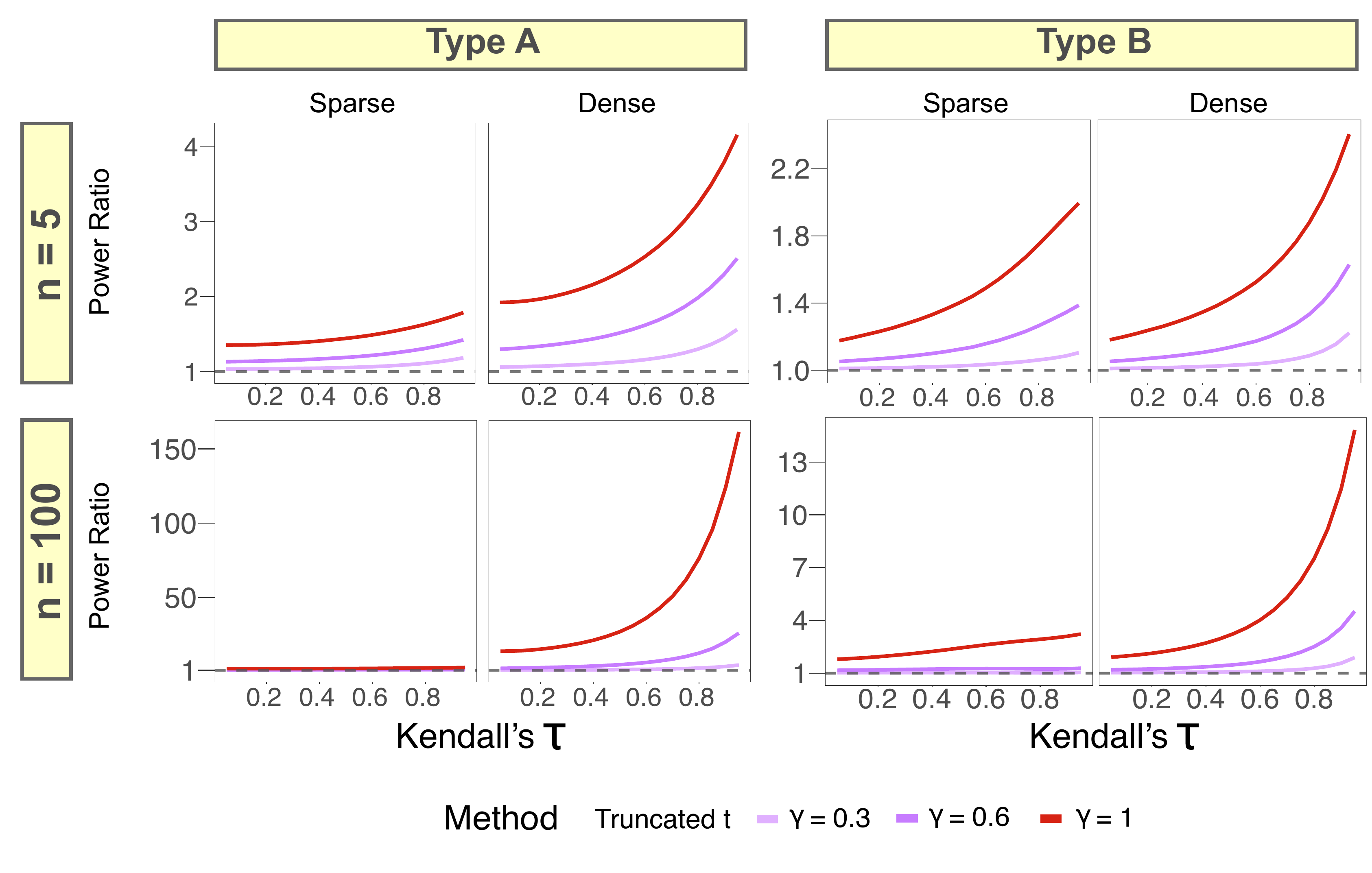}
\caption{Power ratio of the combination test to the Bonferroni test versus Kendall's $\tau$ using truncated $t$ distributions under different alternative types at significance level $\alpha = 0.05$. The underlying dependence among base p-values is modeled using the multivariate $t$ copula. %
}
\label{fig:power-dep-t-5e-2}
\end{figure}

\begin{figure}[!ht]
\centering
\includegraphics[width=\textwidth]{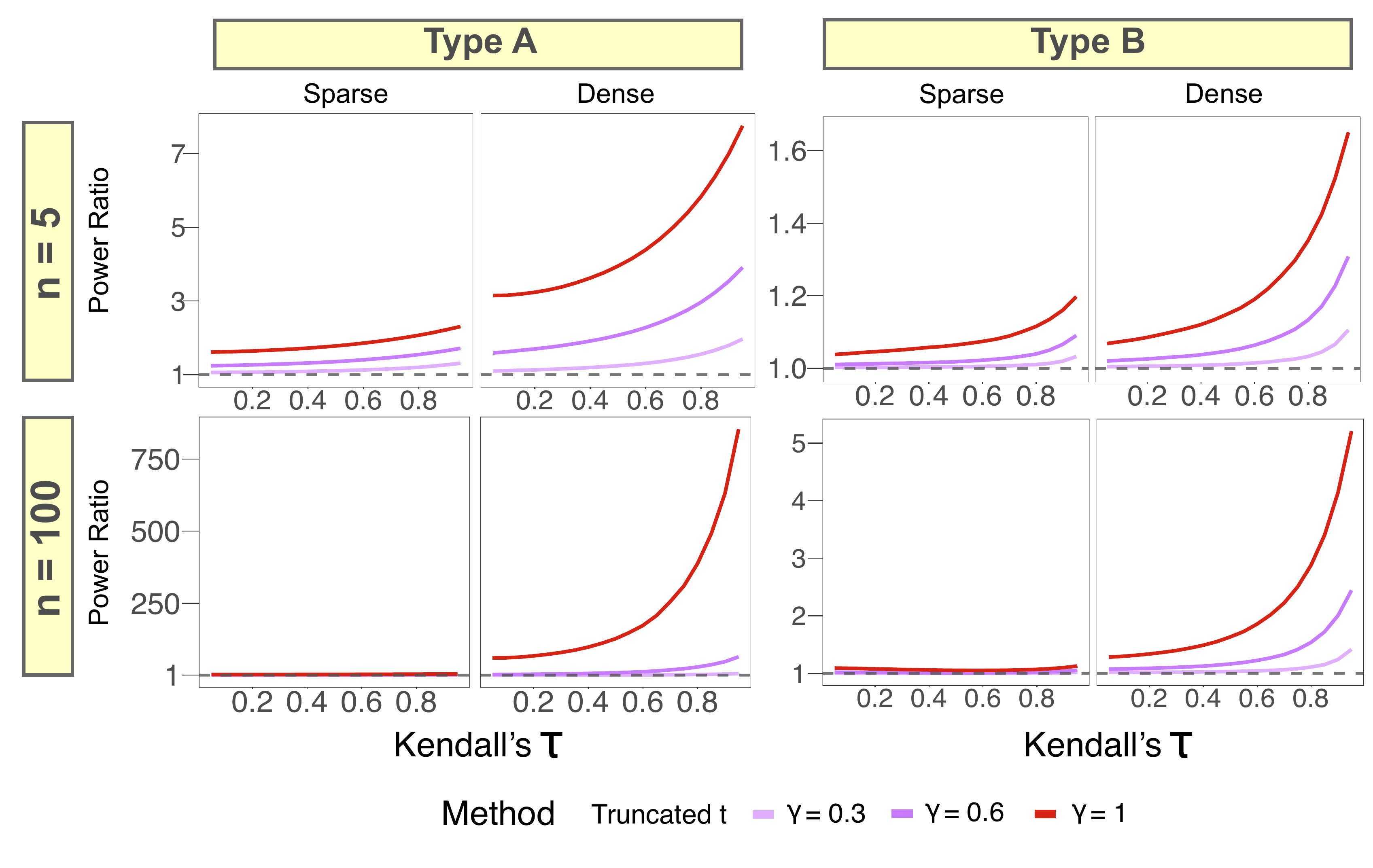}
\caption{Power ratio of the combination test to the Bonferroni test versus Kendall's $\tau$ using truncated t distributions under different alternative types at significance level $\alpha = 5\times 10^{-3}$. The underlying dependence among base p-values is modeled using the multivariate $t$ copula. %
}
\label{fig:power-dep-t-5e-3}
\end{figure}

\begin{figure}[!ht]
\centering
\includegraphics[width=\textwidth]{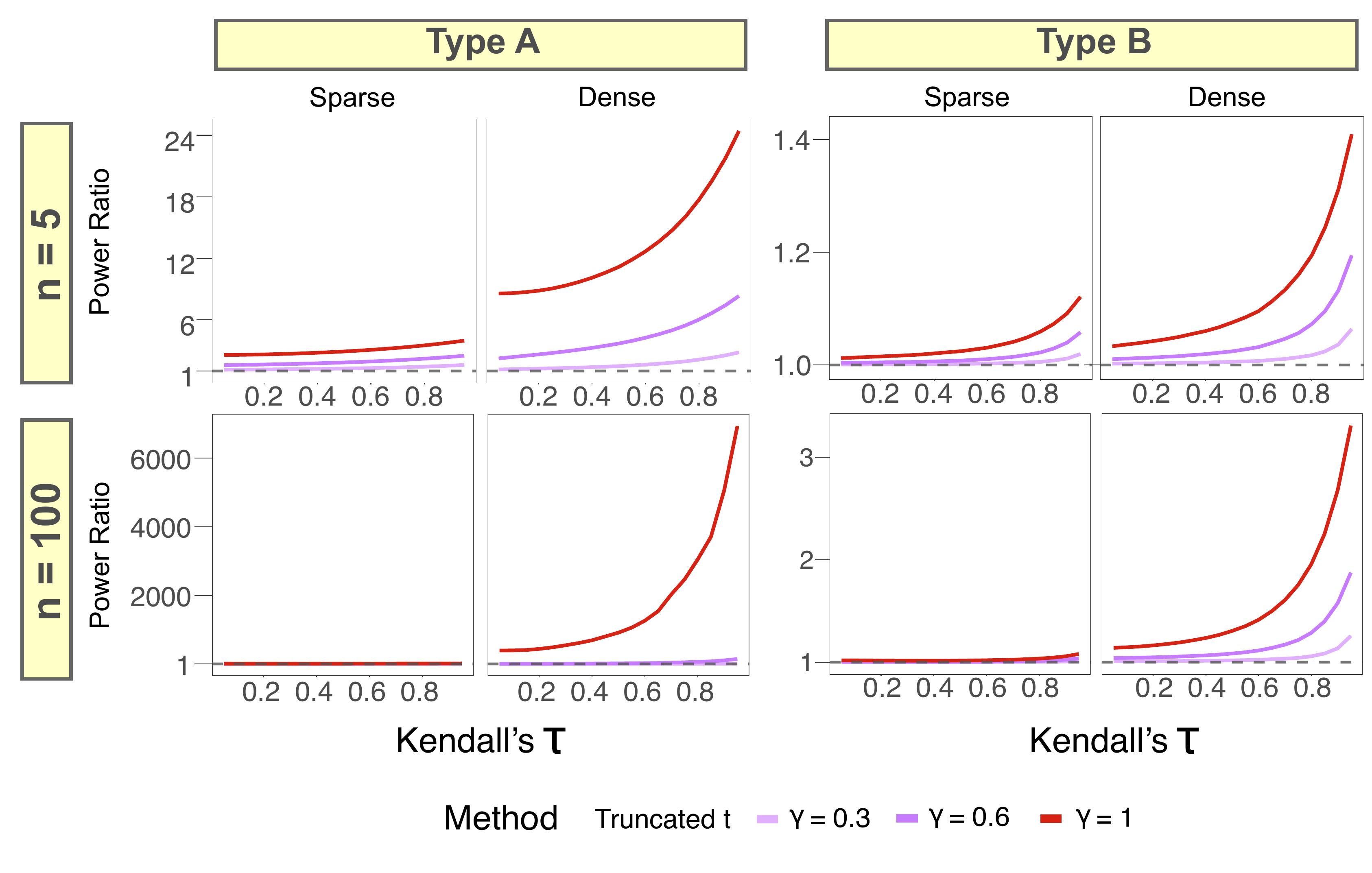}
\caption{Power ratio of the combination test to the Bonferroni test versus Kendall's $\tau$ using truncated $t$ distributions under different alternative types at significance level $\alpha = 5\times 10^{-4}$. The underlying dependence among base p-values is modeled using the multivariate $t$ copula.  %
}
\label{fig:power-dep-t-5e-4}
\end{figure}

\begin{figure}[!ht]
\centering
\includegraphics[width=\textwidth]{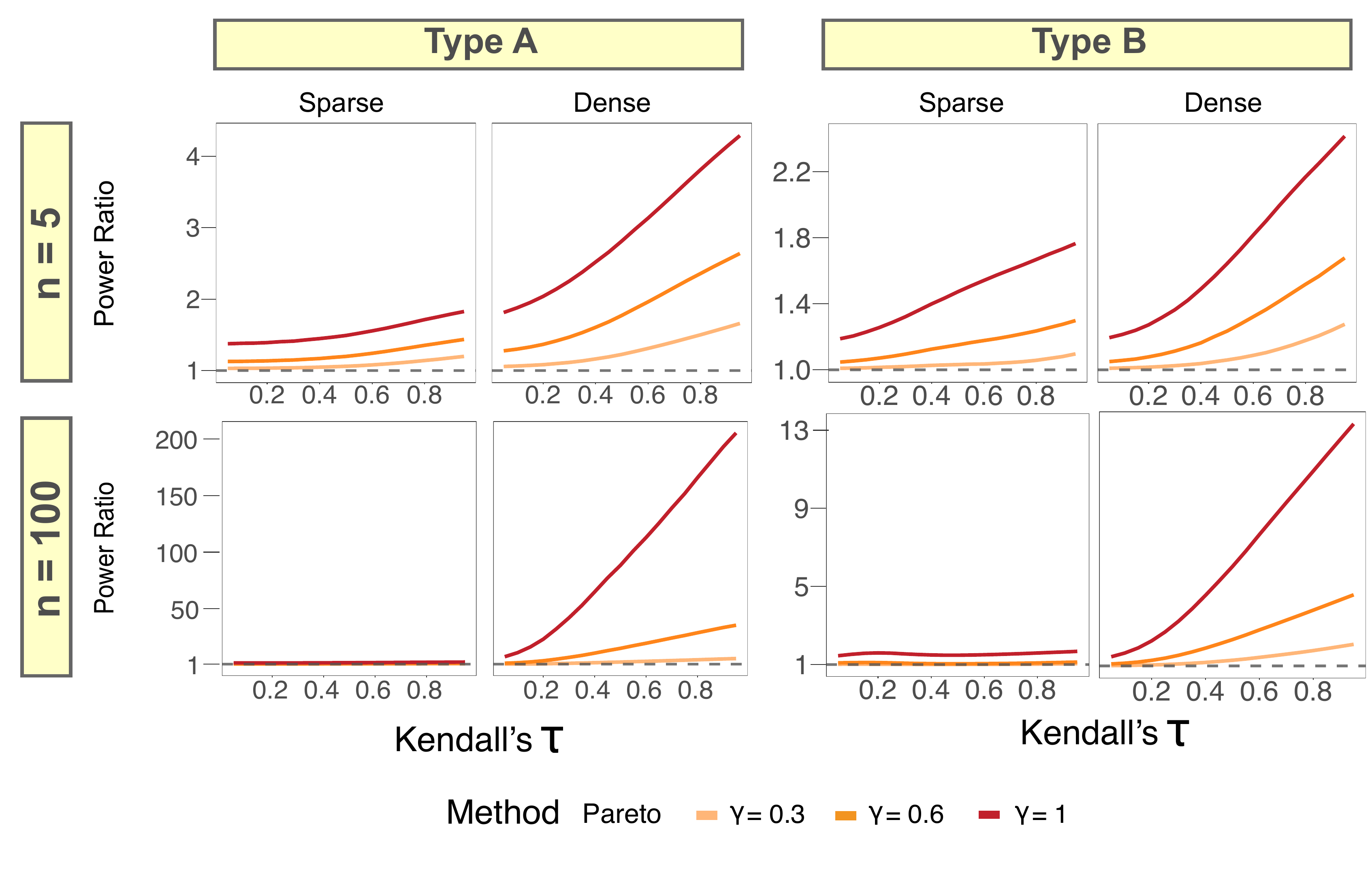}
\caption{Power ratio of the combination test to the Bonferroni test versus Kendall's $\tau$ using Pareto distributions under different alternative types at significance level $\alpha = 0.05$. The underlying dependence among base p-values is modeled using the Clayton copula. %
}
\label{fig:pareto-power-dep-clayton-5e-2}
\end{figure}

\begin{figure}[!ht]
\centering
\includegraphics[width=\textwidth]{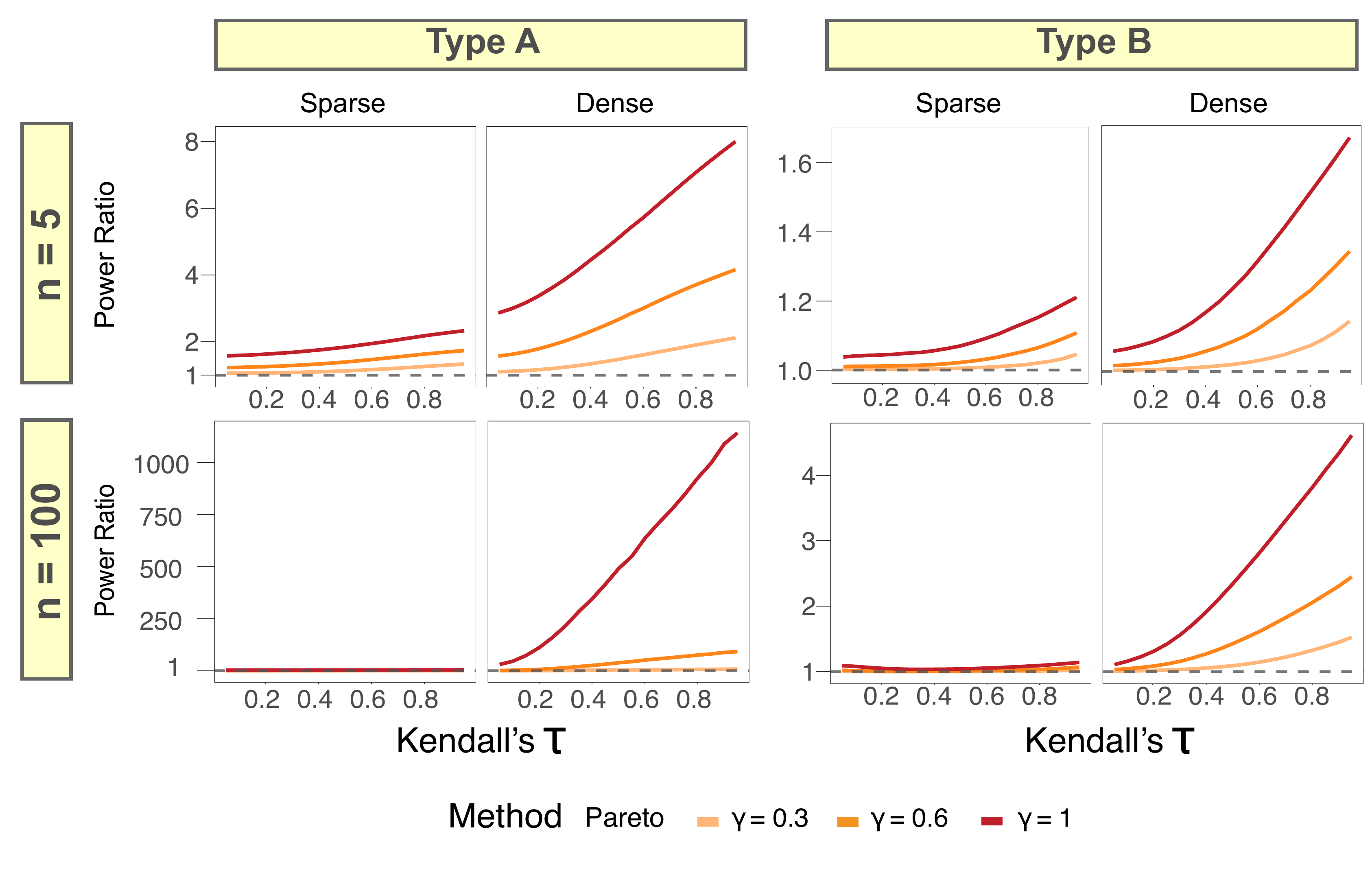}
\caption{Power ratio of the combination test to the Bonferroni test versus Kendall's $\tau$ using Pareto distributions under different alternative types at significance level $\alpha = 5\times 10^{-3}$. The underlying dependence among base p-values is modeled using the Clayton copula. %
}
\label{fig:paretopower-dep-clayton-5e-3}
\end{figure}

\begin{figure}[!ht]
\centering
\includegraphics[width=\textwidth]{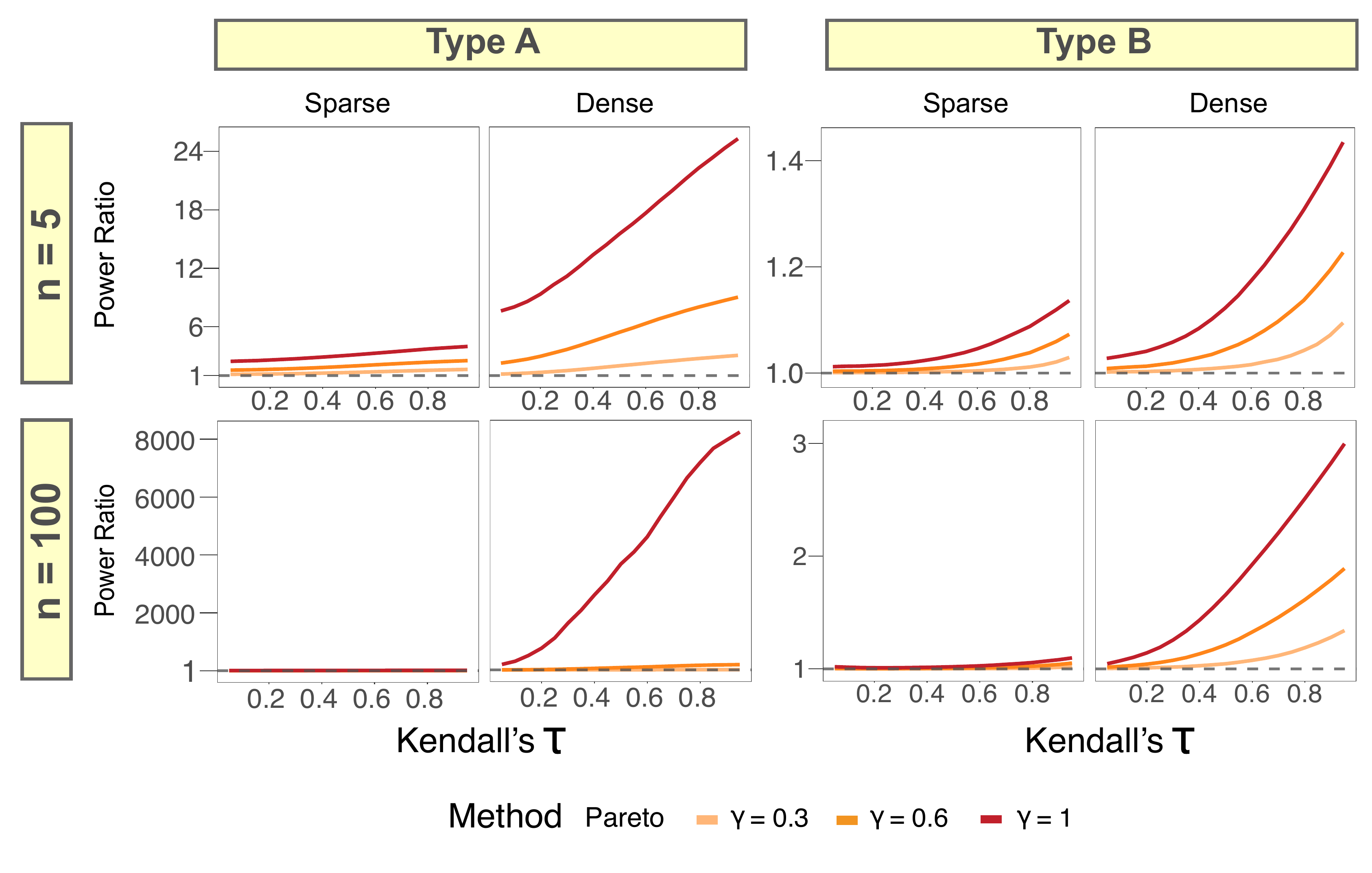}
\caption{Power ratio of the combination test to the Bonferroni test versus Kendall's $\tau$ using Pareto distributions under different alternative types at significance level $\alpha = 5\times 10^{-4}$. The underlying dependence among base p-values is modeled using the Clayton copula.  %
}
\label{fig:pareto-power-dep-clayton-5e-4}
\end{figure}

\begin{figure}[!ht]
\centering
\includegraphics[width=\textwidth]{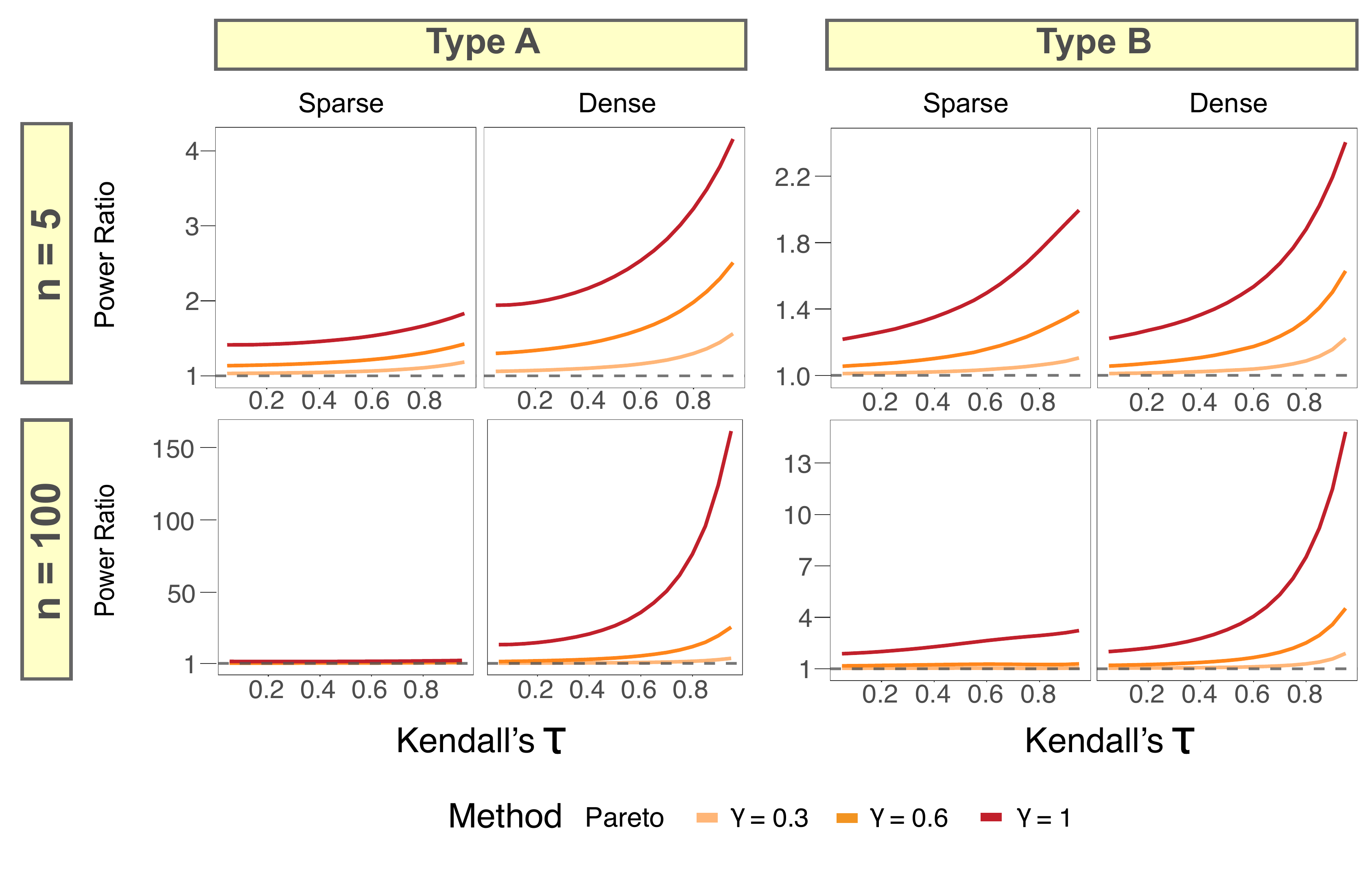}
\caption{Power ratio of the combination test to the Bonferroni test versus Kendall's $\tau$ using Pareto distributions under different alternative types at significance level $\alpha = 0.05$. The underlying dependence among base p-values is modeled using the multivariate $t$ copula. %
}
\label{fig:pareto-power-dep-t-5e-2}
\end{figure}

\begin{figure}[!ht]
\centering
\includegraphics[width=\textwidth]{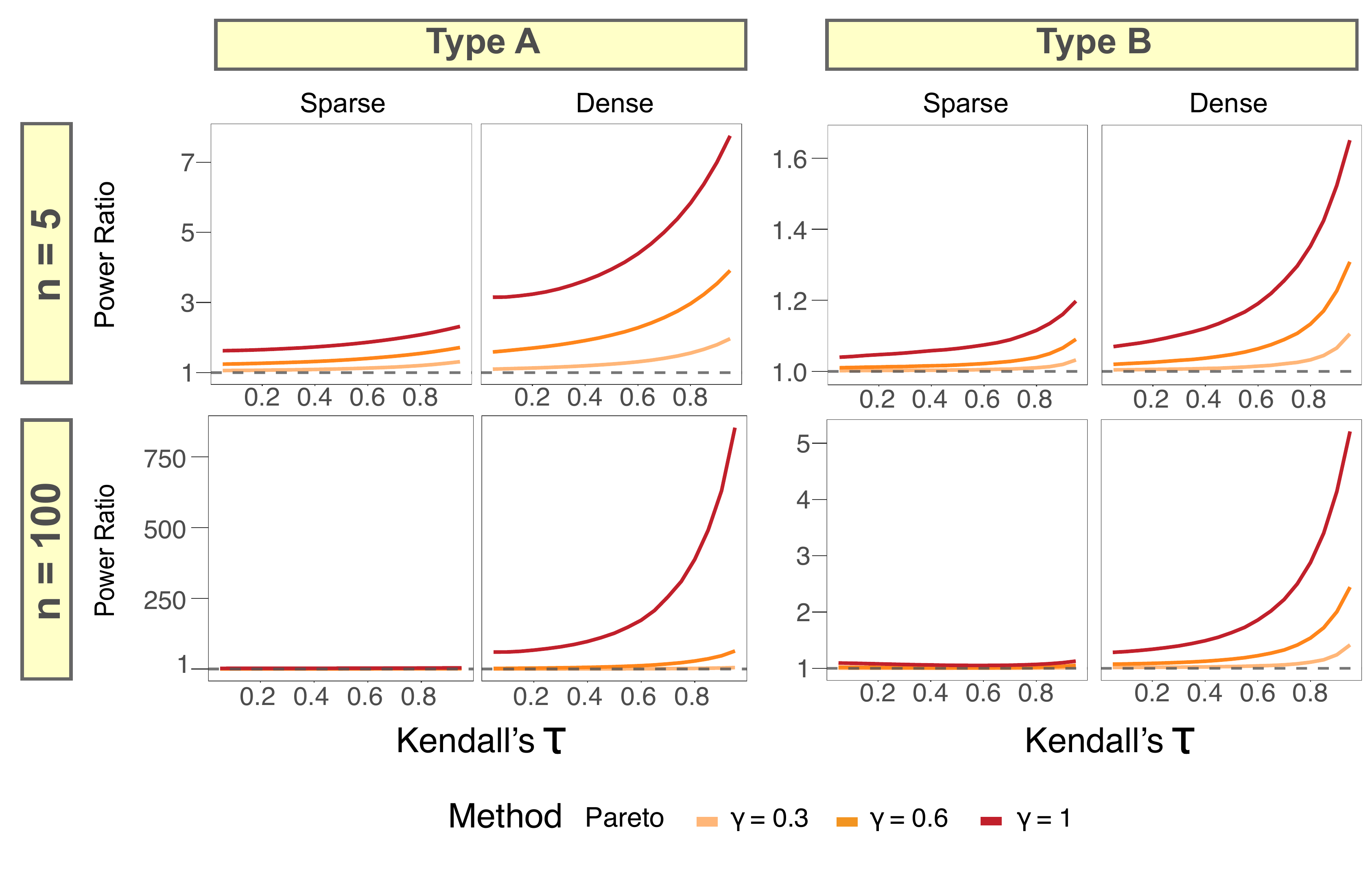}
\caption{Power ratio of the combination test to the Bonferroni test versus Kendall's $\tau$ using Pareto distributions under different alternative types at significance level $\alpha = 5\times 10^{-3}$. The underlying dependence among base p-values is modeled using the multivariate $t$ copula. %
}
\label{fig:pareto-power-dep-t-5e-3}
\end{figure}

\begin{figure}[!ht]
\centering
\includegraphics[width=\textwidth]{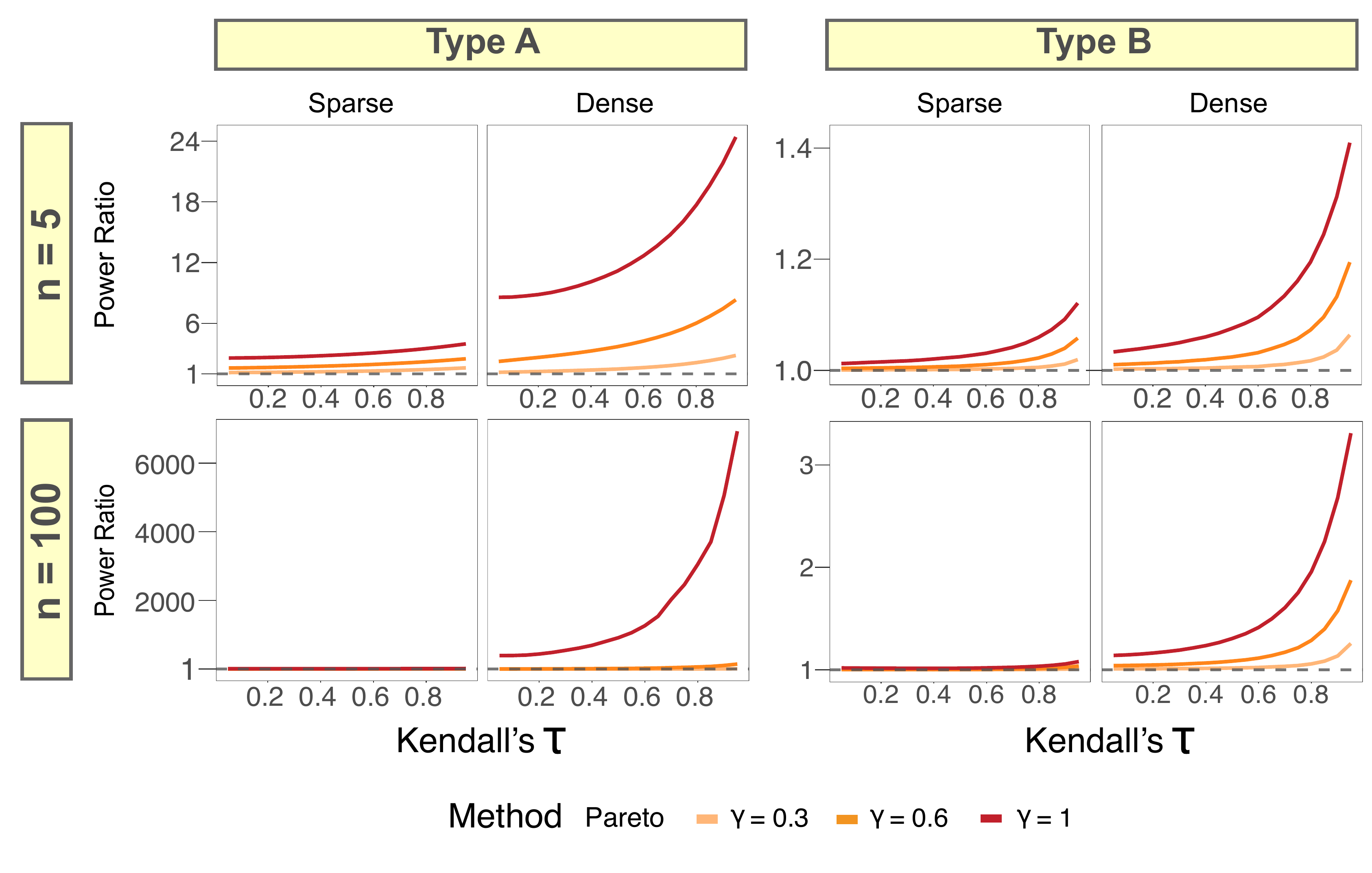}
\caption{Power ratio of the combination test to the Bonferroni test versus Kendall's $\tau$ using Pareto distributions under different alternative types at significance level $\alpha = 5\times 10^{-4}$. The underlying dependence among base p-values is modeled using the multivariate $t$ copula.  %
}
\label{fig:pareto-power-dep-t-5e-4}
\end{figure}

\begin{figure}[t]
\centering
\includegraphics[width=\textwidth]{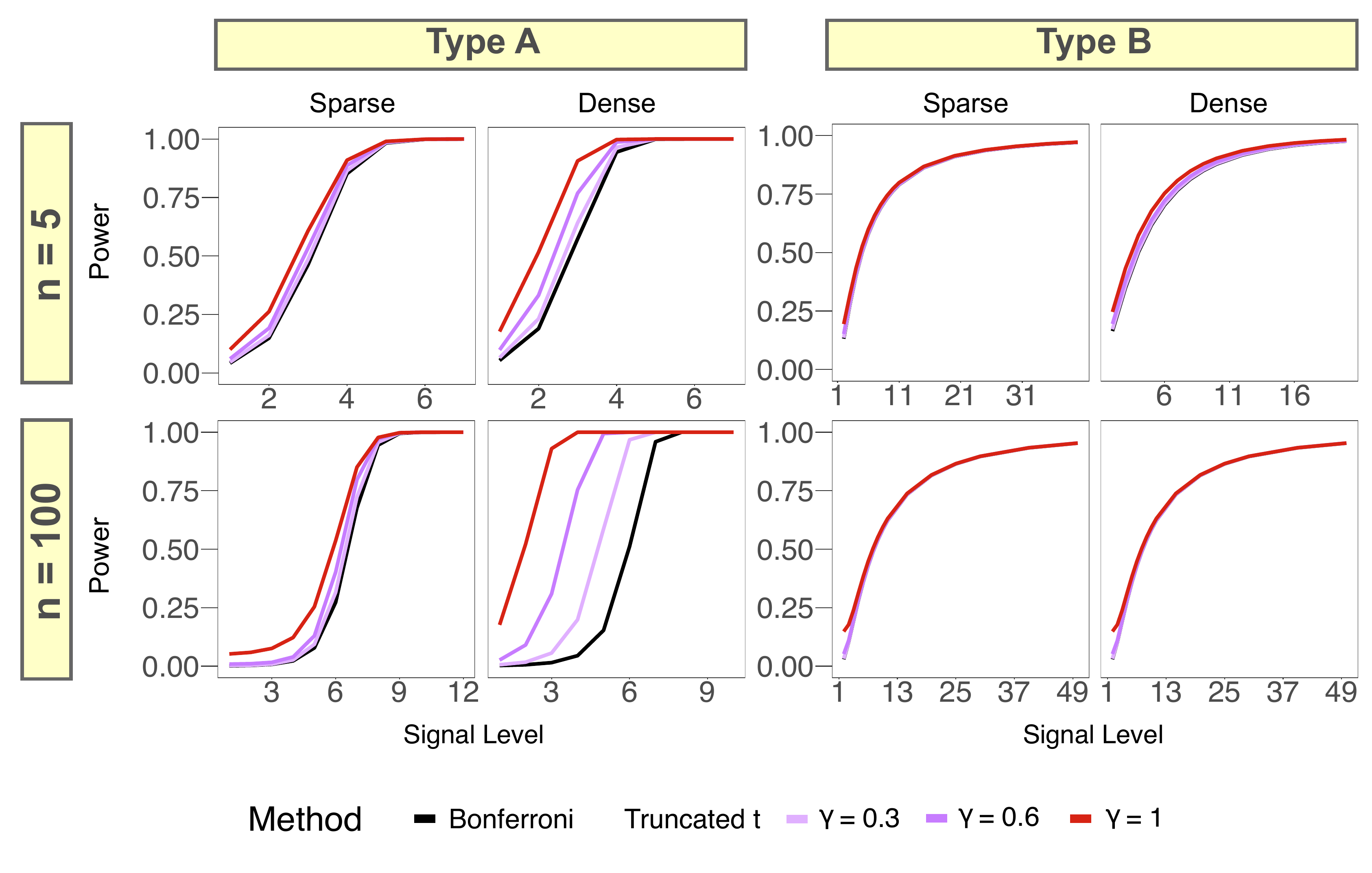}
\caption{The power of the combination test using truncated $t$ distributions and the Bonferroni test versus signal levels under different alternative types at significance level $\alpha=0.05$. The underlying
dependence among base p-values is modeled using the Clayton copula.}
\label{fig:power-signal-clayton-5e-2}
\end{figure}

\begin{figure}[t]
\centering
\includegraphics[width=\textwidth]{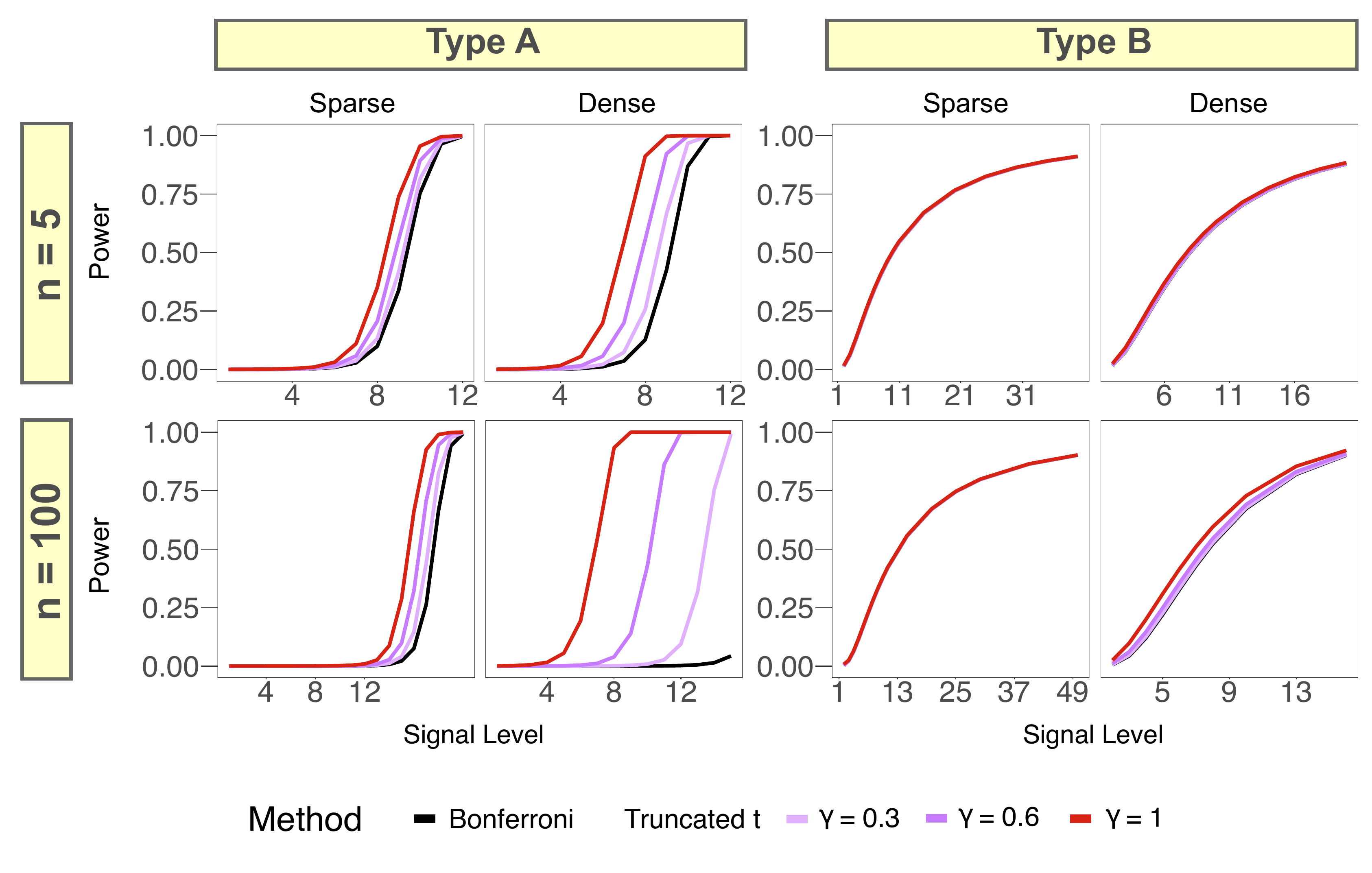}
\caption{The power of the combination test using truncated $t$ distributions and the Bonferroni test versus signal levels under different alternative types at significance level $\alpha=5\times10^{-4}$. The underlying
dependence among base p-values is modeled using the Clayton copula.}
\label{fig:power-signal-clayton-5e-4}
\end{figure}

\begin{figure}[t]
\centering
\includegraphics[width=\textwidth]{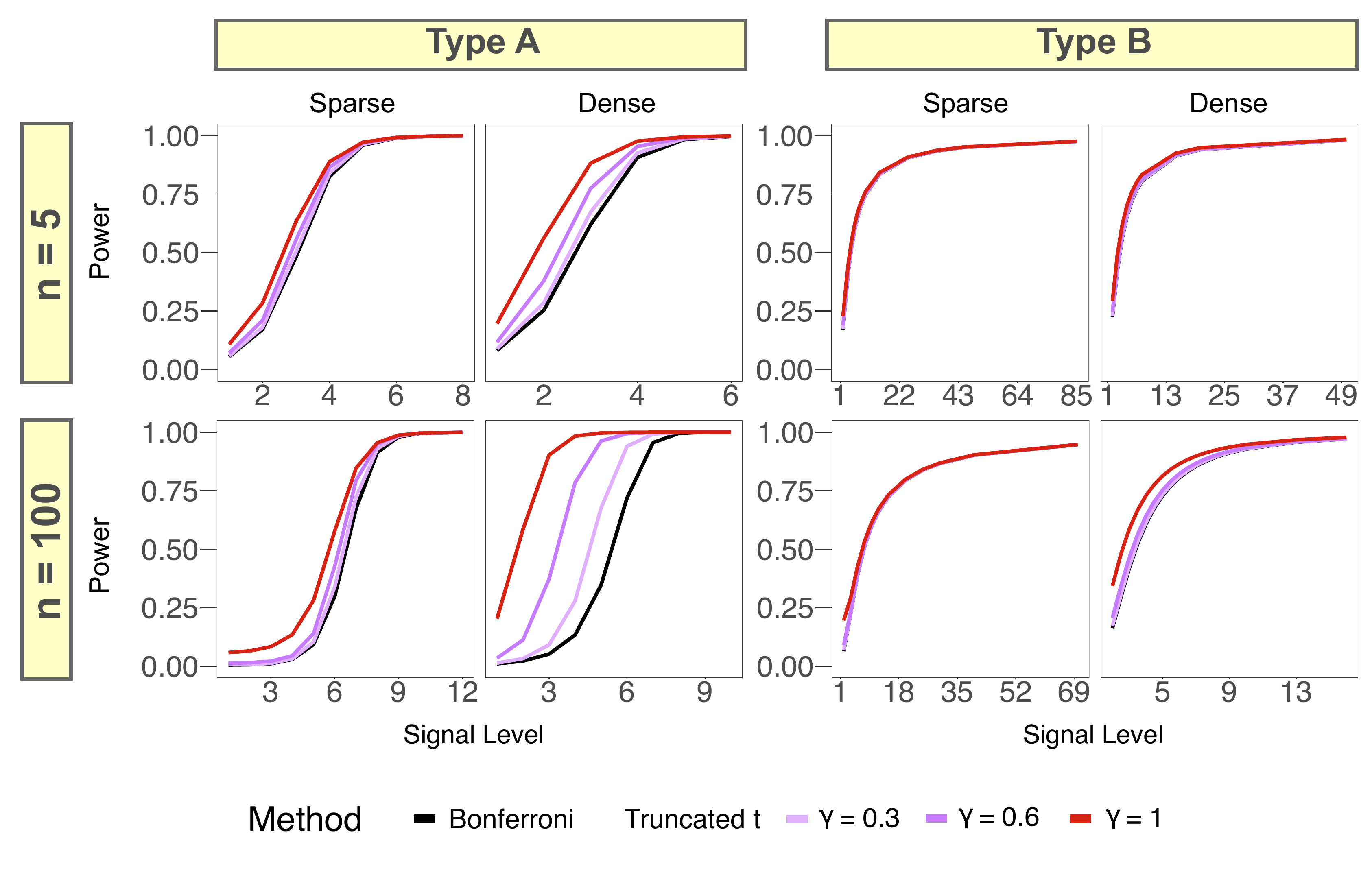}
\caption{The power of the combination test using truncated $t$ distributions and the Bonferroni test versus signal levels under different alternative types at significance level $\alpha=0.05$. The underlying
dependence among base p-values is modeled using the multivariate $t$ copula.}
\label{fig:power-signal-t-5e-2}
\end{figure}

\begin{figure}[t]
\centering
\includegraphics[width=\textwidth]{image/power/tt-power-signal-clayton-5e-3.pdf}
\caption{The power of the combination test using truncated $t$ distributions and the Bonferroni test versus signal levels under different alternative types at significance level $\alpha=5\times10^{-3}$. The underlying
dependence among base p-values is modeled using the multivariate $t$ copula.}
\label{fig:power-signal-t-5e-3}
\end{figure}

\begin{figure}[t]
\centering
\includegraphics[width=\textwidth]{image/power/tt-power-signal-clayton-5e-4.pdf}
\caption{The power of the combination test using truncated $t$ distributions and the Bonferroni test versus signal levels under different alternative types at significance level $\alpha=5\times10^{-4}$. The underlying
dependence among base p-values is modeled using the multivariate $t$ copula.}
\label{fig:power-signal-t-5e-4}
\end{figure}

\begin{figure}[t]
\centering
\includegraphics[width=\textwidth]{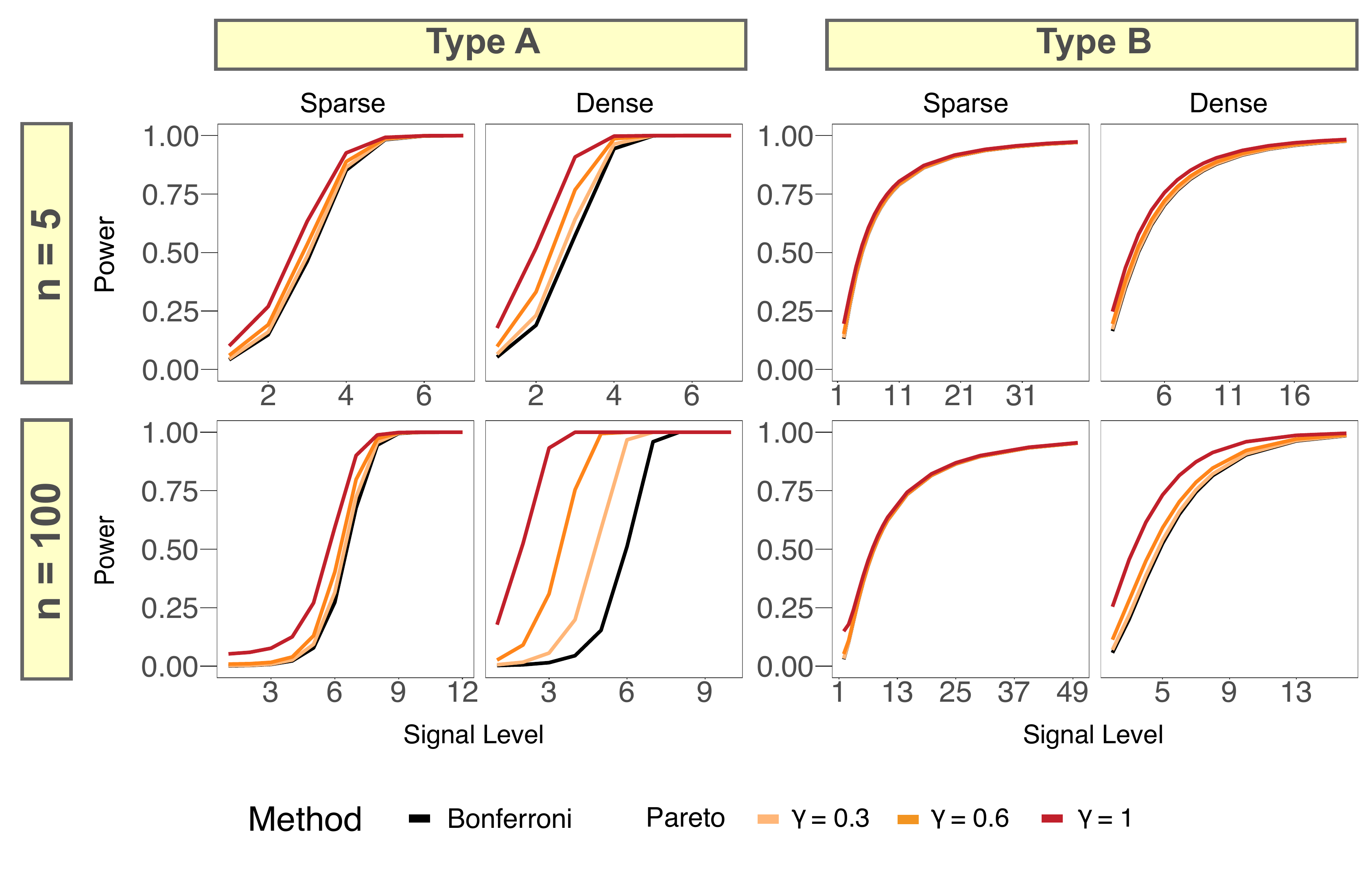}
\caption{The power of the combination test using Pareto distributions and the Bonferroni test versus signal levels under different alternative types at significance level $\alpha=0.05$. The underlying
dependence among base p-values is modeled using the Clayton copula.}
\label{fig:pareto-power-signal-clayton-5e-2}
\end{figure}

\begin{figure}[t]
\centering
\includegraphics[width=\textwidth]{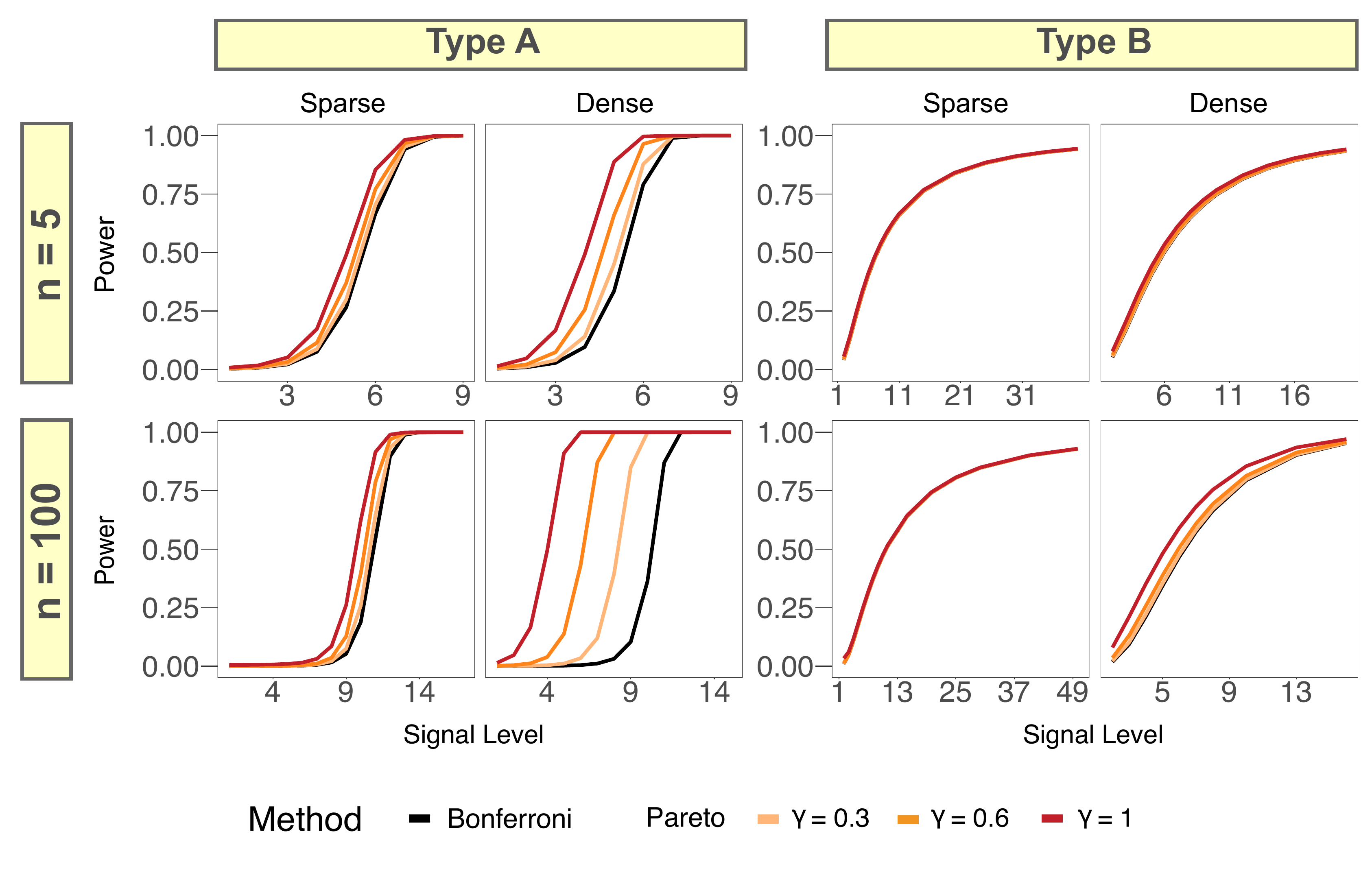}
\caption{The power of the combination test using Pareto distributions and the Bonferroni test versus signal levels under different alternative types at significance level $\alpha=5\times10^{-3}$. The underlying
dependence among base p-values is modeled using the Clayton copula.}
\label{fig:pareto-power-signal-clayton-5e-3}
\end{figure}

\begin{figure}[t]
\centering
\includegraphics[width=\textwidth]{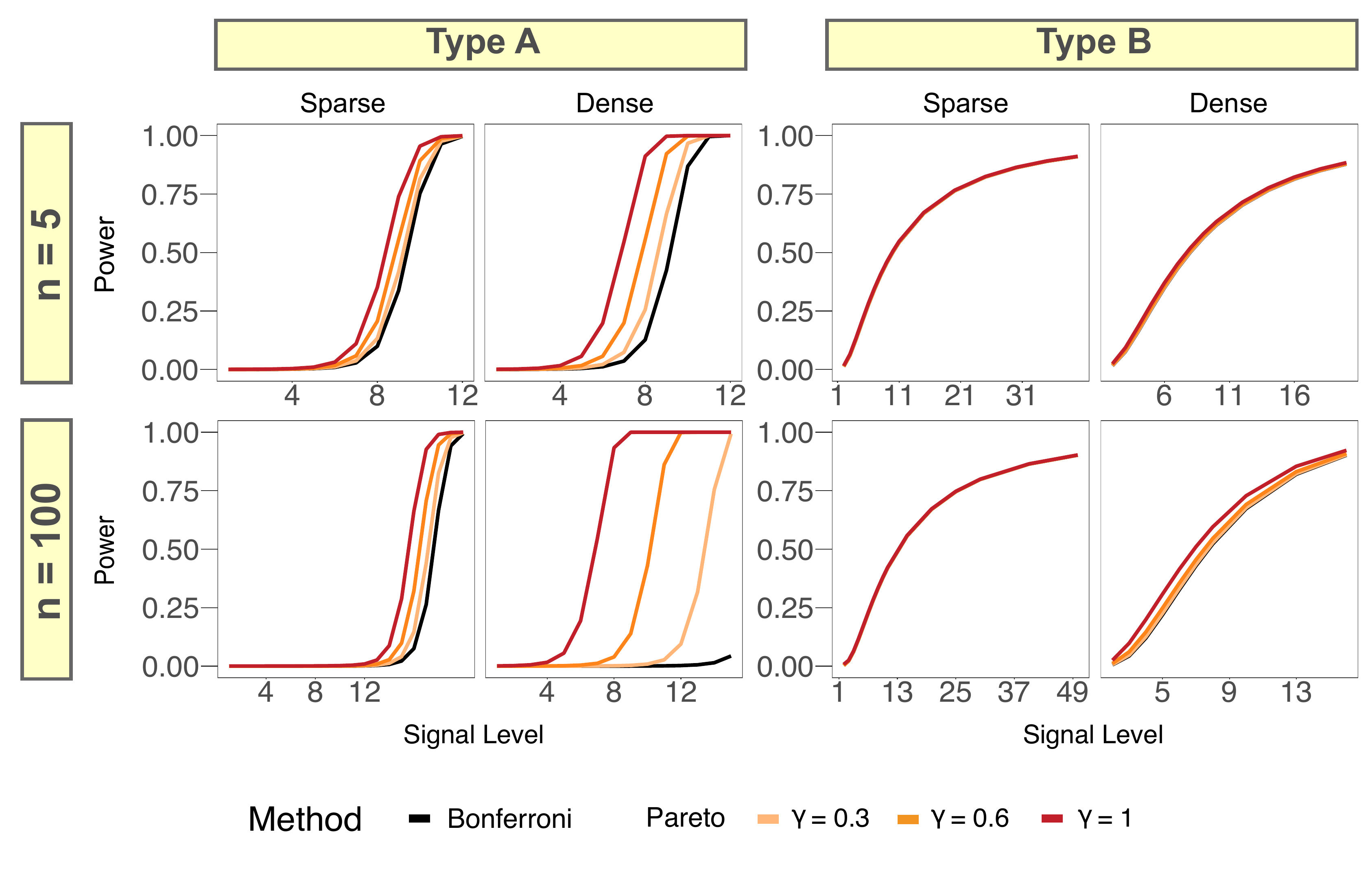}
\caption{The power of the combination test using Pareto distributions and the Bonferroni test versus signal levels under different alternative types at significance level $\alpha=5\times10^{-4}$. The underlying
dependence among base p-values is modeled using the Clayton copula.}
\label{fig:pareto-power-signal-clayton-5e-4}
\end{figure}

\begin{figure}[t]
\centering
\includegraphics[width=\textwidth]{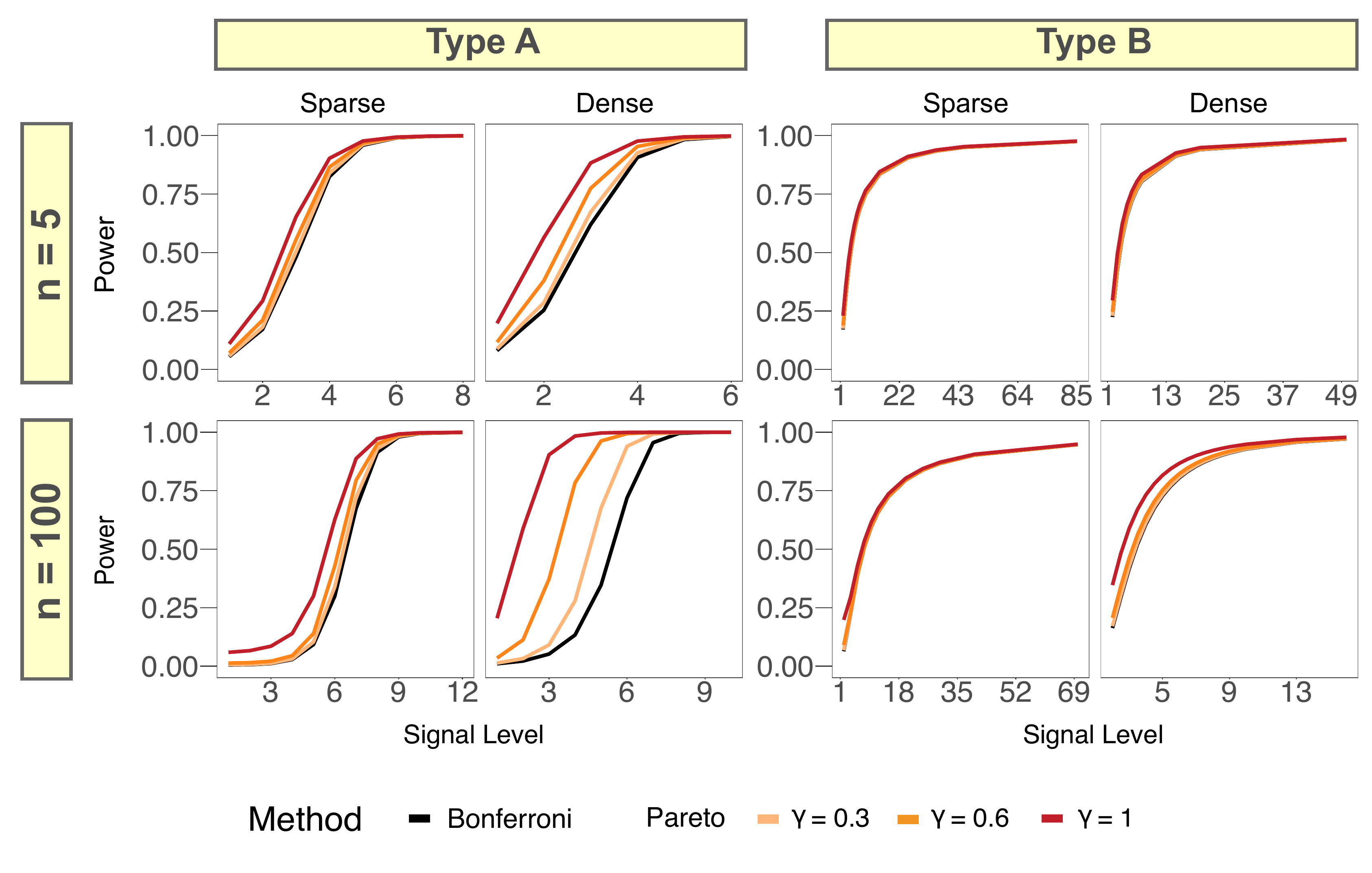}
\caption{The power of the combination test using Pareto distributions and the Bonferroni test versus signal levels under different alternative types at significance level $\alpha=0.05$. The underlying
dependence among base p-values is modeled using the multivariate $t$ copula.}
\label{fig:pareto-power-signal-t-5e-2}
\end{figure}

\begin{figure}[t]
\centering
\includegraphics[width=\textwidth]{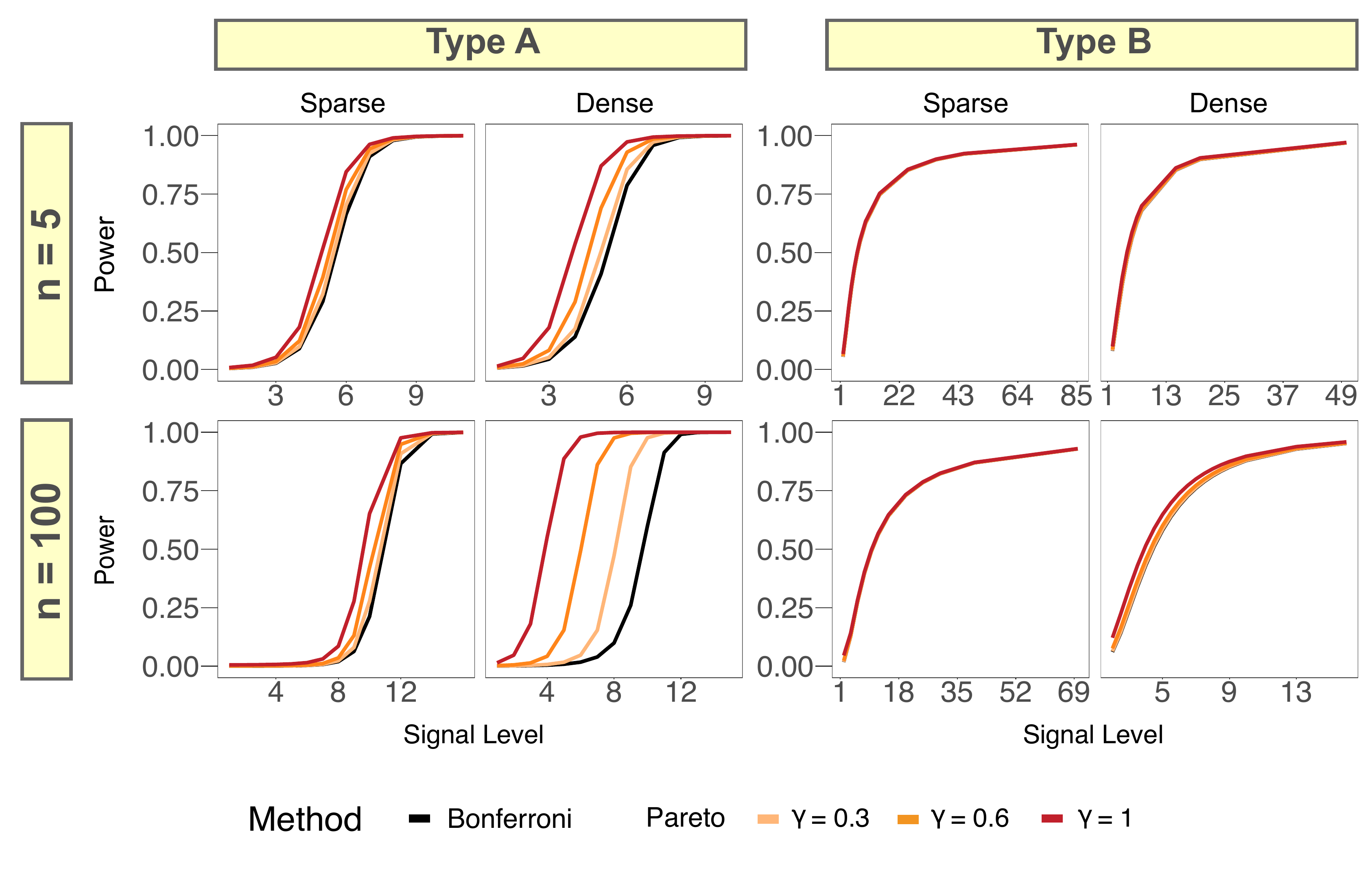}
\caption{The power of the combination test using Pareto distributions and the Bonferroni test versus signal levels under different alternative types at significance level $\alpha=5\times10^{-3}$. The underlying
dependence among base p-values is modeled using the multivariate $t$ copula.}
\label{fig:pareto-power-signal-t-5e-3}
\end{figure}

\begin{figure}[t]
\centering
\includegraphics[width=\textwidth]{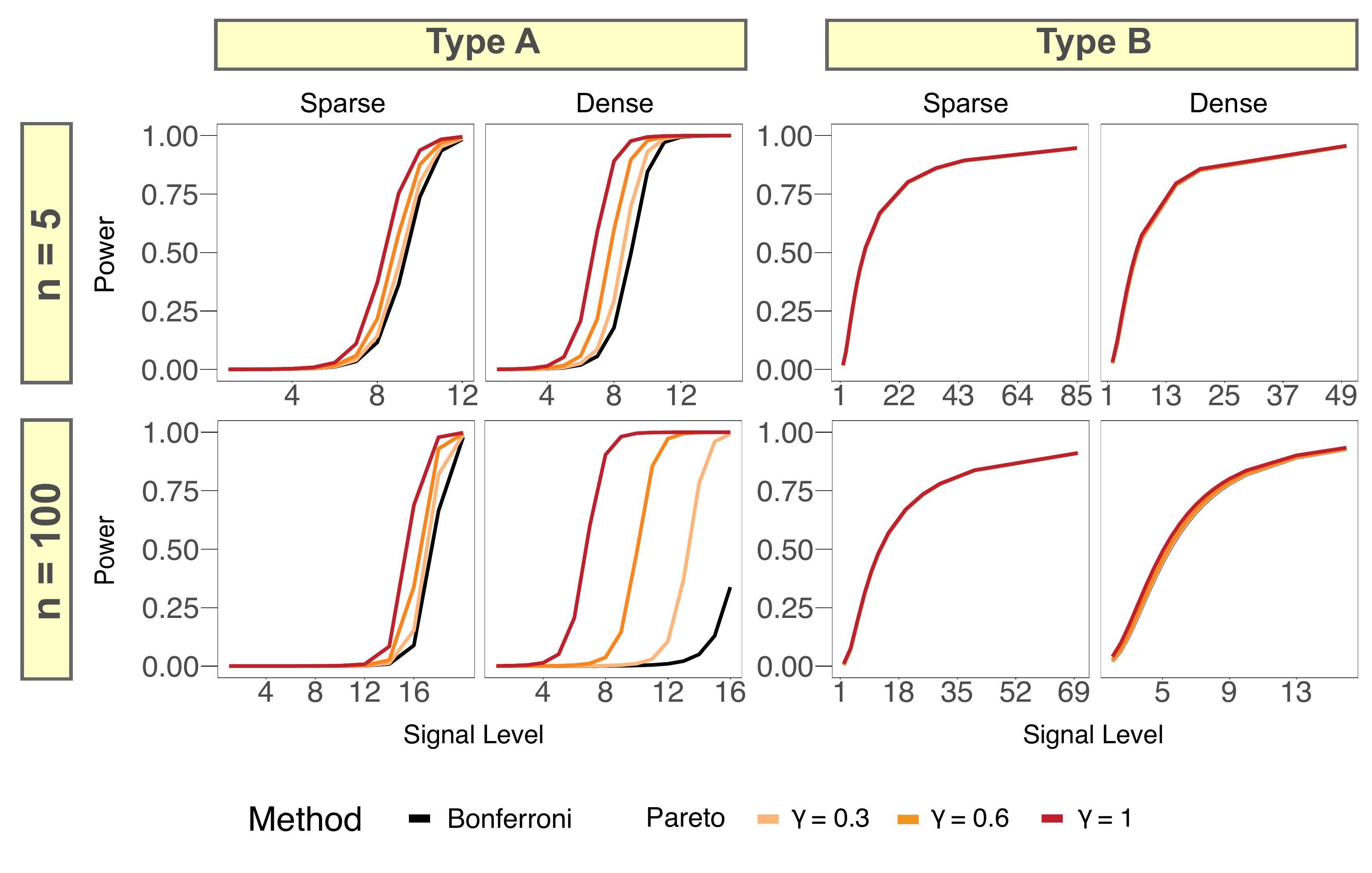}
\caption{The power of the combination test using Pareto distributions and the Bonferroni test versus signal levels under different alternative types at significance level $\alpha=5\times10^{-4}$. The underlying
dependence among base p-values is modeled using the multivariate $t$ copula.}
\label{fig:pareto-power-signal-t-5e-4}
\end{figure}

\begin{figure}[t]
\centering
\includegraphics[width=0.7\textwidth]{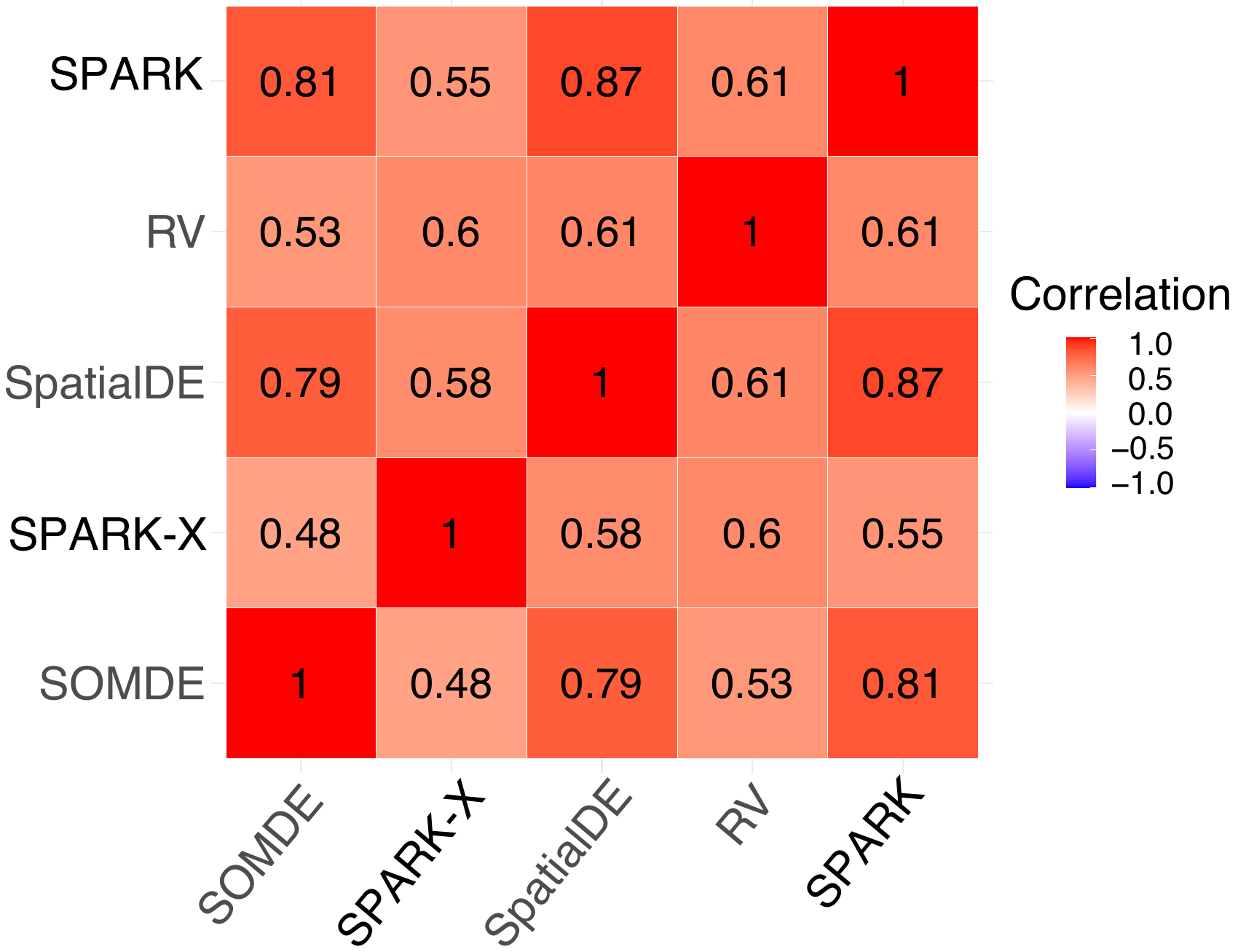}
\caption{The Spearman correlations between p-values by different methods, averaged across $67$ datasets.}
\label{fig:p-vals-corr}
\end{figure}

\begin{figure}[t]
    \centering
    \includegraphics[width=0.7\linewidth]{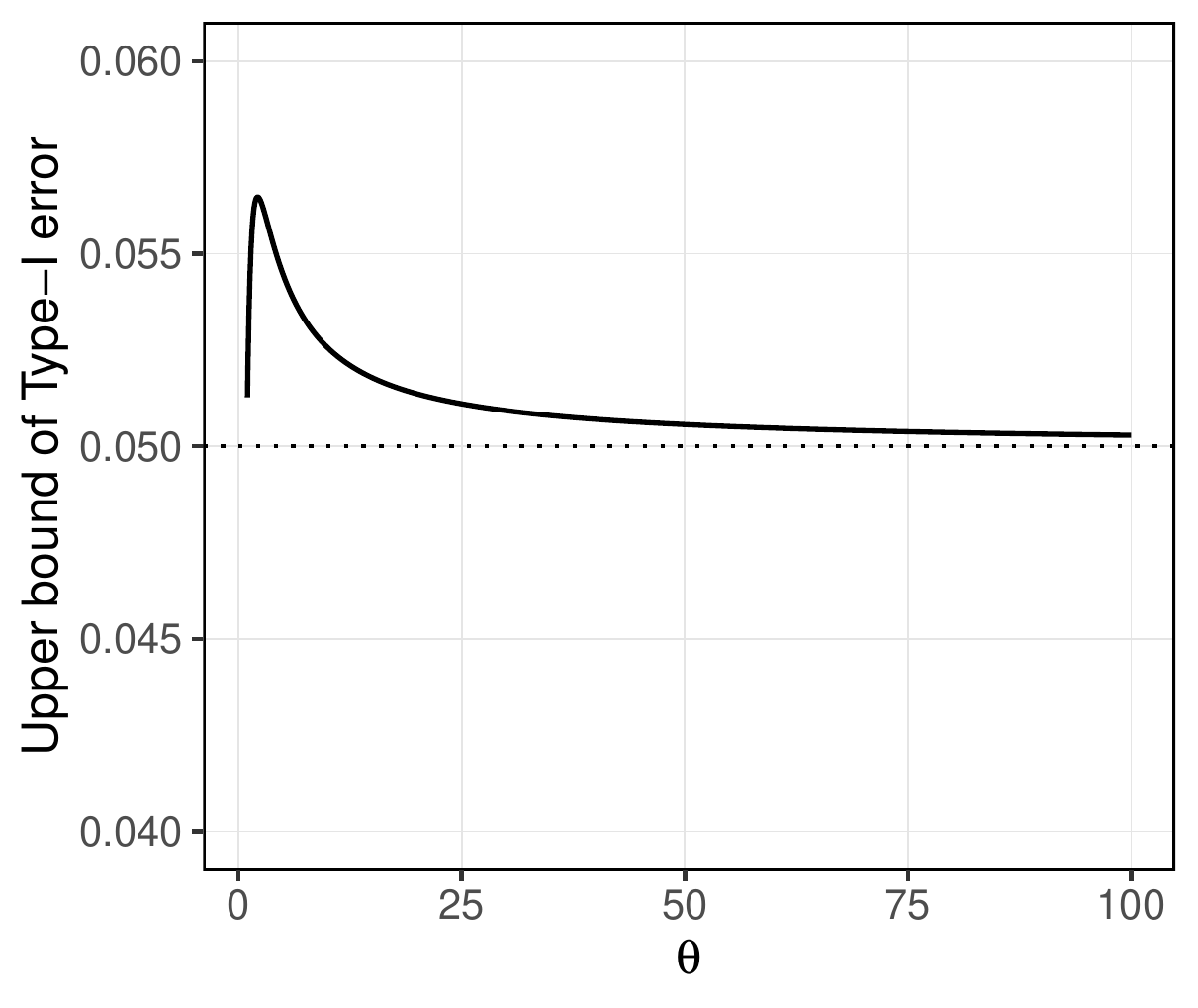}
    \caption{Upper bound on the type-I error of the heavy-tailed combination test under a Clayton copula with Pareto transformation, plotted as a function of the Clayton parameter $\theta$ at significance level $\alpha = 0.05$. The bound, from Theorem 2 of \citet{chen2025subuniformity}, applies when $\theta \geq 1$ (Kendall's $\tau \geq 1/3$) and $\gamma \in [1, \theta]$; here we use $\gamma = 1$. The closed form is $G_{1/\theta}(1/(\alpha^{-\theta} - 1))$, where $G_{1/\theta}$ denotes the CDF of a Gamma distribution with shape parameter $1/\theta$ and rate $1$. The upper bound remains below $0.06$ across the plotted range.}
    \label{fig:nonasymptotic_typeI}
\end{figure}

\begin{table}[ht!]
  \caption{Selected Spatial Transcriptomics Datasets}
  \label{tab:selected_datasets}
  \centering
{\scriptsize
  \begin{tabular}{@{}p{0.26\textwidth}p{0.36\textwidth}p{0.28\textwidth}@{}}
    \toprule
    \textbf{Name} & \textbf{Origin/Tissue} & \textbf{Slide Name} \\
    \midrule
    lnDLPC \citesupp{maynard2021transcriptome} & Human Brain (DLPFC) & 151507-151510,151669-151676 \\
    \addlinespace
    GSE166692 & Developing Mouse Embryos (DME) & 1D-G,1H,3D,3F-H \\
    \addlinespace
    GSE147747 & Mouse Brain (BRAM) & 01,06,11,13,18,20,26-30,AC \\
    \addlinespace
    GSE111672 & Cancer (PDAC) & PDAC-A \\
    \addlinespace
    GSE98364 & Mouse Brain (Cortex) & Cortex \\
    \addlinespace
    STAltmapMBR \citesupp{wang2018three} & Mouse Brain & 20180905\_BY3\_16genes \\
    \addlinespace
    CNP0001343 \citesupp{chen2022spatiotemporal} & Mouse Embryo & E9.5 E2\&3-E2\&4 \\
    \addlinespace
    10xBCBCA & FFPE Human Breast Cancer & - \\
    10xCECA & FFPE Human Cervical Cancer & - \\
    10xINCA & FFPE Human Intestine Cancer & - \\
    10xMBR & FFPE Human Normal Prostate & - \\
    10xMPR & FFPE Mouse Brain & - \\
    10xMKR & FFPE Mouse Kidney & - \\
    10xAMBR & V1 Adult Mouse Brain & - \\
    10xHE & V1 Human Heart & - \\
    10xBRA & V1 Mouse Brain Sagittal Anterior Section2 & - \\
    10xBRB & V1 Mouse Brain Sagittal Posterior Section2 & - \\
    10xMKI & V1 Mouse Kidney & - \\
    10xMOB & V1 Mouse Olfactory Bulb & - \\
    \addlinespace
    EGA00001000851 \citesupp{andersson2021spatial}& Human Breast Cancer & F1-F3 \\
    \addlinespace
    GSE16926CO & Colon & 2101-2112  \\
    \addlinespace
    GSE16926LI & Liver & 2104-2107  \\
    \addlinespace
    GSE11706LI & Mouse Embryo & E10 whole 1-3 \\
    \addlinespace
    GSE13796ET & Mouse Embryo Tails & E11 tail 1-2 \\
    \bottomrule
  \end{tabular}
}
\end{table}

\end{document}